\newcommand{\mz}{\ensuremath{\mathbb Z}}
\newcommand{\mr}{\ensuremath{\mathbb R}}
\newcommand{\mh}{\ensuremath{\mathbb H}}
\newcommand{\mc}{\ensuremath{\mathbb C}}
\newcommand{\shortmod}{\ensuremath{\negthickspace \negthickspace \negthickspace \pmod}}
\newcommand{\half}{\ensuremath{ \frac{1}{2}}}
\newcommand{\intR}{\int_{-\infty}^{\infty}}
\newcommand{\thalf}{\tfrac12}
\newcommand{\sumstar}{\sideset{}{^*}\sum}
\newcommand{\leg}[2]{\left(\frac{#1}{#2}\right)}
\newcommand{\e}[2]{e\left(\frac{#1}{#2}\right)}
\newcommand{\SL}[2]{SL_{#1}(\mathbb{#2})}
\newcommand{\GL}[2]{GL_{#1}(\mathbb{#2})}
\newcommand{\leftexp}[2]{{\vphantom{#2}}^{#1}{#2}}
\theoremstyle{plain}		
	\newtheorem{mytheo}{Theorem}[section]
	\newtheorem{myprop}[mytheo]{Proposition}
	\newtheorem{mycoro}[mytheo]{Corollary}
     \newtheorem{mylemma}[mytheo]{Lemma}
\theoremstyle{remark}
\numberwithin{equation}{section}
\begin{document}
\author{Xiaoqing Li}
\address{Deptartment of Mathematics \\
State University of New York at Buffalo \\
Buffalo, NY, 14260
}
\email{XL29@buffalo.edu}

\author{Matthew P. Young} 
\address{Department of Mathematics \\
	  Texas A\&M University \\
	  College Station \\
	  TX 77843-3368}
\email{myoung@math.tamu.edu}
\thanks{This material is based upon work supported by the National Science Foundation under agreement Nos. DMS-0901035 (X.L.), DMS-0758235 (M.Y.),  and DMS-0635607 (X.L. and M.Y.).  Any opinions, findings and conclusions or recommendations expressed in this material are those of the authors and do not necessarily reflect the views of the National Science Foundation.}

\today
\title{The $L^2$ restriction norm of a $GL_{3}$ Maass form}
\maketitle
\section{Introduction and main result}
Kac's question, ``Can one hear the shape of a drum?'' \cite{Kac} is a famous example of the interest in the connections between geometrical data and spectral information, which continues to be a fascinating direction of study.
Weyl's law gives a beautiful asymptotic formula for the counting function of the eigenvalues on a compact Riemannian manifold in terms of geometrical quantities (dimension, volume, etc.).  

In quantum chaos, a key issue is the behavior of the eigenfunctions as the eigenvalue becomes large.  In particular, one would like to know if the eigenfunctions behave like random waves, or if they can concentrate on certain subdomains.
The influentical QUE conjecture of Rudnick and Sarnak \cite{RS1} asserts that the quantum measures associates to the eigenstates tend (in the weak-$^*$ sense) to the volume measure provided that the manifold has negative curvature.

We are naturally led to studying the sizes of Laplace eigenfunctions which can be measured in various ways.  For instance, one may consider the $L^p$ norms for $p \geq 2$.  Alternatively, one may consider $L^p$ norms of the eigenfunction restricted to some subset of its domain.  In the arithmetical setting one has a commuting family of Hecke operators in addition to the Laplacian, and so it is natural to consider the behavior of these Maass forms.
There are a small handful of results in this direction for $GL_2$ automorphic forms.  In particular, \cite{IS} \cite{Xia} \cite{BH} \cite{Templier} \cite{Milicevic} studied the supremum norm in different aspects. Sarnak and Watson \cite{SarnakBAMS} have announced a proof of a sharp bound (up to $\lambda^{\varepsilon}$) on the $L^4$ norm of Maass forms in the spectral aspect.

Reznikov \cite{Reznikov} wrote an influential preprint studying $L^2$ restriction problems of automorphic forms restricted to certain curves.  Since then, there have appeared a number of papers studying very general problems of bounding the $L^p$ norm of the restriction of the eigenfunction of the Laplacian to a submanifold of a Riemannian manifold, including \cite{BGT} (see also \cite{Hu}) with some very general results which are sharp in their generality, and \cite{BR} which in particular stresses the problem of finding lower bounds.  However, in the context of automorphic forms these general results are not sharp and it is desirable to prove stronger results and to understand what the true order of magnitude should be, whether it can be proven or not.  
Sarnak nicely explains some of the issues in studying such restriction problems, especially the connection with the Lindel\"{o}f hypothesis on pages 5 and 6 in \cite{SarnakReznikov}  (see also \cite{SarnakMorowetz}).  

In a slightly different direction, Michel and Venkatesh \cite{MV} proved a ``subconvex'' geodesic restriction theorem (see their Section 1.4) for the geodesic Fourier coefficients of $GL_2$ automorphic forms.

In this paper, we study
a novel restriction problem for a $GL_3$ Maass form restricted to a codimension $2$ submanifold (essentially $GL_2 \times \mr^{+}$).  Such a restricted function has nice invariance properties; it is invariant by $\SL{2}{Z}$ on the left and by $O_2(\mr)$ on the right, and it is natural to understand how it fits into the $GL_2$ picture.  For instance, one can ask what is the inner product of this restricted function with a given $\SL{2}{Z}$ Maass form (or more generally, we ask for the spectral decomposition).
In fact, the Rankin-Selberg $L$-function for $GL_3 \times GL_2$ is constructed along these lines.  There are many examples of such period integrals giving values of $L$-functions, in particular we mention \cite{GrossPrasad}.

Our main result is the following.
\begin{mytheo}
\label{thm:normbound}
 Let $F$ be a Hecke-Maass form of type $(\nu_1, \nu_2)$ for $\SL{3}{Z}$ that is in the tempered spectrum of $\Delta$ (meaning $\text{Re}(\nu_1) = \text{Re}(\nu_2) = 1/3$ or alternately the Langlands parameters $i\alpha, i\beta, i\gamma$ defined by \eqref{eq:alpha}-\eqref{eq:gamma} are purely imaginary), with Laplace eigenvalue $\lambda_F(\Delta) = 1 + \half(\alpha^2 + \beta^2 + \gamma^2)$, and with $L^2$ norm equal to $1$.  Then we have
\begin{equation}
\label{eq:N(F)}
 N(F):= \int_0^{\infty} \int_{\SL{2}{\mz} \backslash \mathcal{H}^2} \Big|F\begin{pmatrix} z_2 y_1 & \\ & 1\end{pmatrix}\Big|^2 \frac{dx_2 dy_2}{y_2^2} \frac{dy_1}{y_1} \ll_{\varepsilon} \lambda_F(\Delta)^{\varepsilon} |A_F(1,1)|^2,
\end{equation}
where
\begin{equation}
\label{eq:z2}
 z_2 = \begin{pmatrix} 1 & x_2 \\ & 1 \end{pmatrix} \begin{pmatrix} y_2 &  \\ & 1 \end{pmatrix} y_2^{-\half},
\end{equation}
$A_F(1,1)$ is the first Fourier coefficient of $F$, 
and the implied constant depends only on $\varepsilon > 0$.
\end{mytheo}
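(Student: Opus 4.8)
The plan is to decompose the restricted function spectrally in the $GL_2$ variable $z_2$ and by Mellin inversion in $y_1$, to identify the resulting coefficients with $GL_3\times GL_2$ Rankin--Selberg periods, and thereby to reduce \eqref{eq:N(F)} to a second moment estimate for $L$-functions. Write $T:=1+|\alpha|+|\beta|+|\gamma|\asymp\lambda_F(\Delta)^{1/2}$ and $F_{y_1}(z_2):=F\begin{pmatrix} z_2 y_1 & \\ & 1\end{pmatrix}$. For each fixed $y_1>0$ the function $z_2\mapsto F_{y_1}(z_2)$ is a smooth $L^2$ function on $\SL{2}{Z}\backslash\mathcal H^2$ that is left $\SL{2}{Z}$- and right $O_2(\mr)$-invariant, so Parseval's identity for the spectral decomposition of $L^2(\SL{2}{Z}\backslash\mathcal H^2)$ expresses $N(F)$ as a sum of a cuspidal part $\sum_j\int_0^\infty|\langle F_{y_1},u_j\rangle|^2\,\tfrac{dy_1}{y_1}$ over an orthonormal Hecke basis $\{u_j\}$ of cusp forms with spectral parameters $t_j$, a continuous part built from $\int_{\mr}\int_0^\infty|\langle F_{y_1},E(\cdot,\tfrac12+it')\rangle|^2\,\tfrac{dy_1}{y_1}\,dt'$, and a one-dimensional residual part coming from the constant function. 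By Mellin--Plancherel in $y_1$, each inner integral equals $\tfrac{1}{2\pi}\int_{\mr}|M(it)|^2\,dt$ with $M(s)=\int_0^\infty\langle F_{y_1},\cdot\rangle\,y_1^{s}\,\tfrac{dy_1}{y_1}$.

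The second step is to unfold. Inserting the Fourier--Whittaker expansion of $F$ together with the Fourier expansion of $u_j$ (or of the Eisenstein series), $M_j(s)$ becomes the classical $GL_3\times GL_2$ Rankin--Selberg integral and evaluates to
\[
M_j(s)\;=\;A_F(1,1)\,\rho_j(1)\,G_j(s)\,L(\tfrac12+s,\,F\times u_j),
\]
where $\rho_j(1)$ is the first Fourier coefficient of $u_j$, so that $|\rho_j(1)|^2\asymp\cosh(\pi t_j)/L(1,\mathrm{sym}^2 u_j)$ up to $T^\varepsilon$, and $G_j(s)$ is the explicit archimedean integral --- a product of Gamma factors in the Langlands parameters $\{i\alpha,i\beta,i\gamma\}$ and $\{it_j,-it_j\}$ given by a Barnes-type (Stade) formula. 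In the continuous spectrum $L(\tfrac12+s,F\times u_j)$ is replaced by $L(\tfrac12+s+it',F)L(\tfrac12+s-it',F)/\zeta(1+2it')$, and the residual term reduces to the constant term of $F$ along the $(1,2)$-unipotent, which is governed by $L(s,F)$ and is of lower order. Hence the cuspidal contribution to $N(F)$ is, up to a factor $T^\varepsilon$,
\[
\ll_\varepsilon\;|A_F(1,1)|^2\sum_j\frac{\cosh(\pi t_j)}{L(1,\mathrm{sym}^2 u_j)}\int_{\mr}|G_j(it)|^2\,|L(\tfrac12+it,\,F\times u_j)|^2\,dt,
\]
and the continuous-spectrum contribution is the analogue with $L(\tfrac12+it,F\times u_j)$ replaced by $L(\tfrac12+it+it',F)L(\tfrac12+it-it',F)$ and an extra integration over $t'$ against $|\zeta(1+2it')|^{-2}$ times the corresponding archimedean factor, while the residual contribution is smaller still.

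The third step is the estimation of these expressions, which splits into an archimedean analysis and a moment bound. By Stirling's formula one shows that $|G_j(it)|^2\cosh(\pi t_j)$ decays rapidly once $t_j$ or $t$ leaves the window $|t_j|,|t|\ll T$ and that in the relevant ranges it has size roughly $T^{-3}$; consequently at most $\asymp T^2$ forms $u_j$ contribute and the inner $t$-integral has effective length $\asymp T$. Matching the target bound $N(F)\ll\lambda_F(\Delta)^\varepsilon|A_F(1,1)|^2$ then reduces to the Lindel\"of-on-average estimate
\[
\int_{|t|\ll T}\ \sum_{|t_j|\ll T}|L(\tfrac12+it,\,F\times u_j)|^2\,dt\;\ll\;T^{3+\varepsilon},
\]
which is one power of $T$ beyond what the spectral large sieve alone delivers. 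This I would prove by the approximate functional equation followed by the Kuznetsov formula for $\SL{2}{Z}$: the diagonal term produces $\asymp(\log T)\,|A_F(1,1)|^2$ upon invoking the Rankin--Selberg bound $\sum_{m^2 n\le X}|A_F(m,n)|^2/(m^2 n)\ll|A_F(1,1)|^2\log X$, and the off-diagonal Kloosterman terms must be bounded by $O(T^{3+\varepsilon})$ as well, using Weil's bound together with the oscillation of the archimedean factor $G_j$ to recover the extra power of $T$. The continuous-spectrum and residual pieces are handled in the same way, but there only the trivial second-moment bound $\int_T^{2T}|L(\tfrac12+it,F)|^2\,dt\ll T^{3/2+\varepsilon}$ is needed, which after the $T^{-3}$ archimedean saving comfortably yields a contribution $\ll|A_F(1,1)|^2 T^\varepsilon$.

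The step I expect to be the main obstacle is the archimedean analysis of the $GL_3\times GL_2$ Rankin--Selberg integral $G_j(s)$: establishing its size and its rapid decay uniformly in all configurations of $\alpha,\beta,\gamma,t_j,t$ --- in particular through the transitional ranges where two of these parameters nearly coincide --- since every count in the moment estimate rests on these asymptotics, and, crucially, the off-diagonal Kloosterman bound requires exploiting the precise oscillatory behaviour of $G_j$, not merely its size.
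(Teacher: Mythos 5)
Your first two steps coincide with the paper's: the $GL_2$ spectral decomposition of $z_2\mapsto F(\begin{smallmatrix} z_2y_1 & \\ & 1\end{smallmatrix})$ plus Mellin--Plancherel in $y_1$, followed by unfolding to the $GL_3\times GL_2$ Rankin--Selberg integral with Stade's archimedean factor, is exactly Theorem \ref{thm:normformula} and Propositions \ref{prop:Gj}, \ref{prop:GL3GL2RS} (one small correction: the inner product with the constant function vanishes identically because $F$ is cuspidal, and odd $u_j$ drop out; there is no residual piece ``governed by $L(s,F)$''). The genuine gaps are in your third step. First, the reduction ``archimedean factor $\asymp T^{-3}$, about $T^2$ forms, $t$-range $\asymp T$, hence need $\int_{|t|\ll T}\sum_{|t_j|\ll T}|L(\tfrac12+it,F\times u_j)|^2\,dt\ll T^{3+\varepsilon}$'' only describes the generic configuration $\alpha-\beta\asymp\beta-\gamma\asymp T$. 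The support of $\cosh(\pi t_j)|G_{t_j}(\tfrac12+it)|^2$ is governed by $X=t+t_j\in[\beta,\alpha]$, $Y=t-t_j\in[\gamma,\beta]$ (up to $\log^2T$), and when $\alpha-\beta=O(1)$ the family collapses to a thin set on which the conductor is only $\asymp T^2$ and no power of averaging is available; the paper must dissect the $(t,t_j)$-range into $O(\log^2T)$ subfamilies with parameters $(R,S,D,Q,T_0)$ (Lemmas \ref{lemma:Gsize}--\ref{lemma:meanvaluereduction}) and prove the bound $Q^{1/2+\varepsilon}$ for each, which in the small families is only the convexity bound, yet still suffices. Your single clean moment estimate does not cover these degenerate configurations.

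Second, and more seriously, your plan for the off-diagonal after Kuznetsov --- ``Weil's bound together with the oscillation of the archimedean factor $G_j$'' --- is not a workable substitute for what is actually needed. After the approximate functional equation the coefficients are $\lambda_{F\times u_j}(n)=\sum_{\ell^2 m=n}A_F(\ell,m)\lambda_j(m)$, and the $A_F(\ell,m)$ are controlled only on average (Lemma \ref{lemma:Molteni}); Weil's bound for the Kloosterman sums cannot recover the missing power of $T$ beyond the spectral large sieve, precisely because the $n$-sum carries unknown $GL_3$ coefficients of length up to $Q^{1/2}\approx T^3$. The paper's route is different: after Kuznetsov and a separation of variables (Theorem \ref{thm:separationofvars}), Gallagher's large sieve disposes of the range where the gcd parameter $A$ is small, but in the complementary range one must apply the $GL_3$ Voronoi summation formula of Miller--Schmid (Theorem \ref{thm:Voronoi}) to the $A_F(\ell,n)$-sum to shorten it, with a ``soft'' second-moment treatment of the resulting integral transform (Lemma \ref{lemma:Phiproperties}) that secures uniformity in the varying form $F$ without any stationary-phase analysis of $G_j$ --- the kind of analysis your sketch would require but which is exactly what the authors found infeasible here. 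Without the Voronoi input (or an equivalent idea) your argument cannot close in the large-$A$ range, so the central estimate remains unproved.
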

Remarks.  
This is the first sharp codimension $2$ restriction result, as well as the first such result in a higher rank ($GL_3$) context.  

It is a pleasant exercise to compute the analog of $N(F)$ when $F$ is a Maass form for $\SL{2}{Z}$, that is $N(F) := \int_0^{\infty} |F(\begin{smallmatrix} y &  \\ & 1 \end{smallmatrix})|^2 \frac{dy}{y}$: one obtains the second moment along the critical line of the completed $L$-function associated to $F$.  

For context, the bound of \cite{BGT} would give $N(F) \ll \lambda_F(\Delta)^{1/2} (\log{\lambda_F(\Delta)})^{1/2} |A_F(1,1)|^2$.  Strictly speaking, their bound does not apply since $\SL{3}{\mz} \backslash \SL{3}{\mr}/SO_{3}(\mr)$ is not compact, but more importantly our bound is much stronger and is probably sharp (up to the $\varepsilon$).

The problem of bounding $N(F)$ was given in \cite{SarnakReznikov}, where he remarks that the Lindel\"{o}f hypothesis gives the bound stated in our Theorem \ref{thm:normbound}.  In our work, this will be apparent in Section \ref{section:scaling}.

S.D. Miller \cite{M} showed that ``almost all'' cusp forms are tempered, and the Archimedean Ramanujan-Selberg conjecture implies that all cusp forms are tempered.
With the Langlands functoriality conjectures, one can show $|A_F(1,1)|^2 \ll \lambda_F(\Delta)^{\varepsilon}$, but this is difficult to establish unconditionally as it is related to showing the non-existence of a Landau-Siegel zero for the Rankin-Selberg $L$-function $L(s, F \times \overline{F})$ (see Proposition \ref{prop:Landau} below for the precise relation).  Fortunately, for Maass forms $F$ that arise as a symmetric-square lift of a $\SL{2}{Z}$ Maass form (equivalent, $F$ is self-dual), Ramakrishnan and Wang \cite{RW} have shown that $|A_F(1,1)| \ll \lambda_F(\Delta)^{\varepsilon}$, and hence we have the following
\begin{mycoro}
 Let notation be as in Theorem \ref{thm:normbound}.  If $F$ is self-dual then
\begin{equation}
 N(F) \ll_{\varepsilon} \lambda_F(\Delta)^{\varepsilon}.
\end{equation}
\end{mycoro}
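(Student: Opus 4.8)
The plan is to combine Theorem~\ref{thm:normbound} with a bound on the first Fourier coefficient that is available when $F$ is self-dual. Theorem~\ref{thm:normbound} gives, for every $\varepsilon > 0$,
\begin{equation*}
 N(F) \ll_{\varepsilon} \lambda_F(\Delta)^{\varepsilon}\, |A_F(1,1)|^2 ,
\end{equation*}
so the corollary reduces to proving $|A_F(1,1)| \ll_{\varepsilon} \lambda_F(\Delta)^{\varepsilon}$ for self-dual $F$.

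First I would invoke Proposition~\ref{prop:Landau}, which converts an upper bound of the shape $|A_F(1,1)| \ll_{\varepsilon} \lambda_F(\Delta)^{\varepsilon}$ into a lower bound $\mathrm{Res}_{s=1} L(s, F \times \overline{F}) \gg_{\varepsilon} \lambda_F(\Delta)^{-\varepsilon}$, that is, into the statement that $L(s, F \times \overline{F})$ has no Landau--Siegel zero. This is not known for general $F$, but if $F$ is self-dual then it is the symmetric-square lift of a Hecke--Maass cusp form $f$ for $\SL{2}{\mz}$ (self-duality of a cuspidal form on $GL_3$ forces this), so $\overline{F} = F$ and, by the representation-theoretic decomposition $\mathrm{Sym}^2 \otimes \mathrm{Sym}^2 = \mathbf{1} \oplus \mathrm{Sym}^2 \oplus \mathrm{Sym}^4$ together with the automorphy of $\mathrm{Sym}^3 f$ and $\mathrm{Sym}^4 f$ (Kim--Shahidi and Kim),
\begin{equation*}
 L(s, F \times \overline{F}) = L(s, \mathrm{Sym}^2 f \times \mathrm{Sym}^2 f) = \zeta(s)\, L(s, \mathrm{Sym}^2 f)\, L(s, \mathrm{Sym}^4 f) .
\end{equation*}
The pole at $s = 1$ comes entirely from $\zeta(s)$, while $L(s, \mathrm{Sym}^2 f)$ and $L(s, \mathrm{Sym}^4 f)$ are entire, so the required lower bound for the residue is exactly a lower bound $L(1, \mathrm{Sym}^2 f)\, L(1, \mathrm{Sym}^4 f) \gg_{\varepsilon} \lambda_F(\Delta)^{-\varepsilon}$, which in turn follows from ruling out exceptional real zeros of these two $L$-functions near $s = 1$.

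The key input is then the work of Ramakrishnan and Wang \cite{RW}, which establishes precisely this absence of Landau--Siegel zeros for symmetric-power $L$-functions of $\SL{2}{\mz}$ Maass forms, whence $|A_F(1,1)| \ll_{\varepsilon} \lambda_F(\Delta)^{\varepsilon}$; substituting back into the first display gives $N(F) \ll_{\varepsilon} \lambda_F(\Delta)^{\varepsilon}$. The only genuine obstacle in this chain is the elimination of the Landau--Siegel zero --- exactly the reason the unconditional statement is confined to self-dual $F$ --- while the remaining steps are formal, amounting to the Rankin--Selberg factorization above and the standard analytic properties (holomorphy off $s=1$, functional equation, Euler product) of its factors. \eop
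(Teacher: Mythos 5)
Your proof is correct and follows the same route as the paper: the corollary is the immediate combination of Theorem \ref{thm:normbound} with the Ramakrishnan--Wang bound $|A_F(1,1)| \ll_{\varepsilon} \lambda_F(\Delta)^{\varepsilon}$ for self-dual $F$, which the paper cites (via the relation in Proposition \ref{prop:Landau}) in the remark immediately preceding the corollary. Your expanded explanation of why \cite{RW} gives this bound (Gelbart--Jacquet lift, the factorization $\zeta(s)L(s,\mathrm{Sym}^2 f)L(s,\mathrm{Sym}^4 f)$, and the exclusion of Landau--Siegel zeros) is accurate and consistent with what the paper intends.
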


We end the introduction with a brief outline of the rest of the paper.  Sections \ref{section:background} and \ref{section:RS} are devoted to standard material on automorphic forms and Rankin-Selberg $L$-functions.  By the spectral theory for $GL_2$, Parseval's formula, and Plancherel's formula, we derive a pleasant formula connecting the $L^2$ norm of the restriction to $GL_2 \times \mr$ of the $GL_3$ Maass form to families of the $GL_3 \times GL_2$ $L$-functions (Theorem \ref{thm:normformula}).  By applying Stirling's formula to the Archimedean factors of the $L$-functions, we break the families into pieces at appropriate scales; this is done in Section \ref{section:scaling}.  Section \ref{section:bilinearforms} provides some standard tools in harmonic analysis as well as some variations on Gallagher's large sieve inequalities.  We are left with establishing suitable bounds for averages of Rankin-Selberg $L$-functions in different ranges.  In many ranges (but not all), the desired bounds correspond to a Lindel\"{o}f bound on average, while in all ranges, dropping all but one term recovers the convexity bound.  By applying the approximate functional equations for the Rankin-Selberg $L$-functions, we are led to prove Theorem \ref{thm:mainthm}: a mean value estimate for the $L$-functions.  An application of the $GL_2$ Kuznetsov formula transforms the spectral sums into mean values with standard exponential sums, giving \eqref{eq:MABdef}.  In \eqref{eq:MABdef}, when $A$ is small, i.e., $B$ is large, a straightforward application of Gallagher's large sieve (Lemma \ref{lemma:largesieve}) gives the desired bound; this is carried out in the rest of Section \ref{section:largesieve}.  When $A$ is big, i.e., $B$ is small, we need to use the $GL_3$ Voronoi formula to shorten the sum (see Section \ref{section:Voronoi}) before applying the large sieve; this last step is done in Section \ref{section:largesieve2}.  This basic outline is similar to \cite{Y}, but virtually all the details are changed.  The essential difference is that here the $GL_3$ form is varying, while in \cite{Y}, the $GL_3$ Maass form is fixed.  Here we found a simple method to take care of the uniformity in our estimates (see Lemma \ref{lemma:Phiproperties}).  Stationary phase arguments in \cite{Y} are avoided here and instead we only need to use integration by parts.

\section{Acknowledgements}
We thank Enrico Bombieri and Peter Sarnak for inviting us for the special year at the Institute for Advanced Study
which provided us a nice environment to work. Especially we thank Peter Sarnak for introducing us to this nice problem and for his
encouragement. We also thank John Friedlander, Zeev Rudnick, Soundararajan, and Akshay Venkatesh for their interest, comments, and corrections.

\section{Background on automorphic forms and $L$-functions}
\label{section:background}
We rely on \cite{Goldfeld} for many of the basic facts of higher rank automorphic forms.

Let $m = (m_1, m_2) \in \mathbb{Z}^2$, and let $\nu = (\nu_1, \nu_2) \in \mc^2$.  The Jacquet-Whittaker function for $\SL{3}{Z}$ takes the form
\begin{equation}
 W_J(z, \nu, \psi_m) = \int_{\mr^3} I_{\nu}(w_3 u z) \overline{\psi_m(u)} du_{12} du_{13} du_{23},
\end{equation}
where
\begin{equation}
 w_3 = \begin{pmatrix} & & 1 \\ & -1 & \\ 1 & & \end{pmatrix}, \qquad u = \begin{pmatrix} 1 & u_{12} & u_{13} \\ & 1 & u_{23} \\  & & 1 \end{pmatrix},
\end{equation}
and
\begin{equation}
 \psi_m(u) = e(m_1 u_{23} + m_2 u_{12}), \qquad I_{\nu}(z) = y_1^{\nu_1 + 2 \nu_2} y_2^{2 \nu_1 + \nu_2},
\end{equation}
for
\begin{equation}
 z = 
\begin{pmatrix} 
1 & x_{12} & x_{13} \\
 & 1 & x_{23} \\
& & 1
\end{pmatrix}
\begin{pmatrix} 
y_1 y_2 &  &  \\
 & y_1 &  \\
& & 1
\end{pmatrix}
\in \mathcal{H}^3 := \GL{3}{\mr}/O_3(\mr) \mr^{\times}.
\end{equation}
In many situations it is more convenient to work with the Langlands parameters defined by
\begin{align}
\label{eq:alpha}
 &i\alpha = -\nu_1 - 2 \nu_2 + 1, \\
 &i\beta = -\nu_1 + \nu_2, \\
 &i\gamma = 2 \nu_1 + \nu_2 - 1.
\label{eq:gamma}
\end{align}
Suppose $F$ is a Maass form of type $\nu = (\nu_1, \nu_2)$ for $\SL{3}{Z}$.  
The temperedness of $F$ means that $\alpha, \beta, \gamma$ defined above by \eqref{eq:alpha}-\eqref{eq:gamma} are real.
Thanks to Jacquet, Piatetski-Shapiro, and Shalika, we have the following Fourier-Whittaker expansion (see (6.2.1) of \cite{Goldfeld})
\begin{equation}
\label{eq:Fourierexpansion}
 F(z) = \sum_{\gamma \in U_2(\mz) \backslash \SL{2}{Z}} \sum_{m_1 \geq 1} \sum_{m_2 \neq 0} \frac{A_F(m_1, m_2)}{m_1 |m_2|} W_J\Big( 
M \begin{pmatrix} \gamma & \\ & 1 \end{pmatrix} z, \nu, \psi_{1, \frac{m_2}{|m_2|}} \Big),
\end{equation}
where $U_2(\mz)$ is the group of $2 \times 2$ integer, upper trianguler matrices with ones along the diagonal, and $M$ is the diagonal matrix with entries $m_1 |m_2|, m_1, 1$ along the diagonal.
In later sections we may use the shorthand $W_J(z)$ to denote $W_J(z, (\nu_1, \nu_2), \psi_{1,1})$.  The dual form associated to $F$ (see Proposition 6.3.1 of \cite{Goldfeld}) is of type $(\nu_2, \nu_1)$ and has a similar Fourier expansion but with $A(m_2, m_1)$ as its $(m_1, m_2)$th Fourier coefficient.  
If furthermore $F$ is an eigenform for the full Hecke ring, then $A(m_2, m_1) = \overline{A(m_1, m_2)}$ (see \cite{Goldfeld}, p.271).
Note that switching $\nu_1$ and $\nu_2$ replaces the Langlands parameters $(i\alpha, i\beta, i\gamma)$ by $(-i\gamma, -i\beta, -i\alpha)$.

In our work we crucially require the $GL_3$ Voronoi formula first proved by Miller and Schmid \cite{MS} (see \cite{GolfeldLi} for another proof), which we now state.
Suppose $k=0$ or $1$, and $\psi(x)$ is a smooth, compactly-supported function on the positive reals.  Define
\begin{equation}
 \widetilde{\psi}(s) = \int_0^{\infty} \psi(x) x^{s} \frac{dx}{x}.
\end{equation}
For $\sigma > -1 + \max\{-\text{Re}(i\alpha), -\text{Re}(i\beta),  -\text{Re}(i\gamma)\}$, define
\begin{equation}
 \psi_k(x) = \frac{1}{2 \pi i} \int_{(\sigma)} (\pi^3 x)^{-s} \frac{\Gamma\left(\frac{1+s+i\alpha+k}{2}\right)\Gamma\left(\frac{1+s+i\beta+k}{2}\right)\Gamma\left(\frac{1+s+i\gamma+k}{2}\right)}{\Gamma\left(\frac{-s-i\alpha+k}{2}\right)\Gamma\left(\frac{-s-i\beta+k}{2}\right)\Gamma\left(\frac{-s-i\gamma+k}{2}\right)} \widetilde{\psi}(-s) ds.
\end{equation}
Then define
\begin{gather}
 \Psi_{+}(x) = \frac{1}{2 \pi^{3/2}} ( \psi_0(x) + \frac{1}{i} \psi_1(x) ) \\
\Psi_{-}(x) = \frac{1}{2 \pi^{3/2}} (\psi_0(x) - \frac{1}{i} \psi_1(x)).
\end{gather}
\begin{mytheo}[\cite{MS}]
\label{thm:Voronoi}
 Let $\psi(x)$ be smooth and compactly-supported on the positive reals.  Suppose $d, \overline{d}, c \in \mz$ with $c \neq 0$, $(c,d) = 1$, and $d \overline{d} \equiv 1 \pmod{c}$.  Then
\begin{multline}
 \sum_{n > 0} A_F(m,n) \e{n \overline{d}}{c} \psi(n) = c \sum_{n_1 | cm} \sum_{n_2 > 0} \frac{A_F(n_2, n_1)}{n_1 n_2} S(md, n_2; mc/n_1) \Psi_{+} \leg{n_2 n_1^2}{c^3 m}
\\
+ c \sum_{n_1 | cm} \sum_{n_2 > 0} \frac{A_F(n_2, n_1)}{n_1 n_2} S(md, -n_2; mc/n_1) \Psi_{-} \leg{n_2 n_1^2}{c^3 m},
\end{multline}
where $S(a,b;c)$ is the usual Kloosterman sum.
\end{mytheo}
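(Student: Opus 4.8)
This identity is due to Miller and Schmid \cite{MS}; here I sketch the route through the functional equation of the standard $GL_3$ $L$-function, in the spirit of \cite{GolfeldLi}. The plan is to reduce the Voronoi identity to an analytic statement about one Dirichlet series and then to run a contour shift. Since $\psi$ is smooth and compactly supported, $\widetilde{\psi}(s)$ is entire and decays rapidly in vertical strips, and Mellin inversion gives $\psi(n) = \frac{1}{2\pi i}\int_{(\sigma)}\widetilde{\psi}(s)\,n^{-s}\,ds$. Hence the left-hand side equals $\frac{1}{2\pi i}\int_{(\sigma)}\widetilde{\psi}(s)\,L_m(s;\overline{d}/c)\,ds$ with $\sigma$ large, where $L_m(s;a/c):=\sum_{n\geq1}A_F(m,n)\,e(na/c)\,n^{-s}$ converges absolutely for $\mathrm{Re}(s)$ large by the Rankin--Selberg bound $A_F(m,n)\ll(mn)^{1/2+\varepsilon}$. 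The full identity then follows once one has: (i) the meromorphic (in fact holomorphic, by cuspidality of $F$) continuation of $L_m(s;a/c)$ to $\mc$; (ii) a functional equation exchanging $s\leftrightarrow 1-s$, $F\leftrightarrow\widetilde F$, and the additive twist $a/c$ for a dual additive/arithmetic structure, with the predicted Archimedean factors; and (iii) a shift of the contour from $\mathrm{Re}(s)=\sigma$ to $\mathrm{Re}(s)=1-\sigma$ that crosses no poles, after which the new integral is identified term-by-term with the right-hand side.

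Step (ii) is the crux. Decompose $e(na/c)$ over additive characters of modulus dividing $c$; using the Hecke multiplicativity of the $A_F(m,n)$ and a Gauss-sum computation one writes $L_m(s;a/c)$ as a finite combination of completed standard $L$-functions $\Lambda(s,F\otimes\chi)$ (equivalently, one applies the Godement--Jacquet functional equation directly to the period of $F$ against an additive character, using the automorphy of $F$ under $w_3$). The Archimedean factor of this functional equation has the shape $\pi^{-3(s-1/2)}\prod_{\mu\in\{\alpha,\beta,\gamma\}}\frac{\Gamma((1-s+i\mu+k)/2)}{\Gamma((s-i\mu+k)/2)}$ for the two parities $k=0,1$; after the duplication and reflection formulas and the substitution $s\mapsto -s$ this becomes exactly the ratio of gamma functions defining $\psi_0$ and $\psi_1$, while the conductor of the threefold additive twist supplies the $(\pi^3 x)^{-s}$ with $x=n_2n_1^2/(c^3m)$. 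The arithmetic factor a priori involves $GL_3$ Gauss sums (hyper-Kloosterman sums); the essential simplification, obtained by summing over residue classes and using the divisor condition $n_1\mid cm$, collapses these into the classical Kloosterman sums $S(md,\pm n_2;mc/n_1)$. Finally, running the functional equation for $n>0$ produces both $e(+n_2\cdot)$ and $e(-n_2\cdot)$, i.e. $\cos\pm i\sin$, which is precisely the split into $\Psi_\pm=\frac{1}{2\pi^{3/2}}(\psi_0\pm\frac{1}{i}\psi_1)$ matched against $S(md,\pm n_2;mc/n_1)$.

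I expect the main obstacle to be the arithmetic bookkeeping in Step (ii): handling the ramification when $(c,m)>1$, checking that the $GL_3$ Gauss sums genuinely reduce to classical Kloosterman sums with exactly the moduli $mc/n_1$ and arguments $(md,\pm n_2)$, and pinning down the divisor sum over $n_1\mid cm$ with the correct weights $1/(n_1n_2)$. A second, genuinely analytic, difficulty is the justification of all the formal steps: establishing the continuation and polynomial vertical growth of $L_m(s;a/c)$ before any interchange of the $n$-sum with the Mellin integral is legitimate, verifying absolute convergence of the dual series $\sum_{n_1\mid cm}\sum_{n_2}|A_F(n_2,n_1)|(n_1n_2)^{-1}|\Psi_\pm(n_2n_1^2/(c^3m))|$ (which rests on rapid decay of $\Psi_\pm$ away from $x\asymp1$, a consequence of Stirling applied to the gamma ratio together with the compact support of $\psi$), and confirming that the contour shift in (iii) picks up no residue, which uses that $F$ is cuspidal so the standard $L$-function is entire of moderate growth. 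A conceptually different route, the one actually taken by Miller and Schmid, avoids $L$-functions altogether: one passes to the automorphic distribution attached to $F$ (its boundary value), restricts it along the relevant unipotent direction, and exploits $\SL{3}{Z}$-invariance together with the action of the Weyl element $w_3$; there the analytic subtlety migrates to making sense of the distributional restrictions and their pairings with test functions.
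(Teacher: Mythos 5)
You should first note that the paper does not prove this statement at all: Theorem \ref{thm:Voronoi} is imported verbatim from Miller--Schmid \cite{MS}, with \cite{GolfeldLi} mentioned as an alternative proof, so there is no internal argument to compare against. Your sketch follows the second of these routes (Mellin inversion, analytic continuation and functional equation of the additively twisted Dirichlet series, contour shift, identification of the dual side), and as a strategy it is consistent with how the formula is actually established in the literature; your closing paragraph also correctly identifies the distributional boundary-value method as the one Miller and Schmid themselves use.

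The one place where your outline materially understates the difficulty is Step (ii). Writing $e(na/c)$ as a combination of Dirichlet characters and invoking functional equations of $\Lambda(s,F\otimes\chi)$ works cleanly only when $c$ is prime, where every nontrivial character mod $c$ is primitive; for general composite $c$ (and a fortiori when $(c,m)>1$) the decomposition produces imprimitive characters and incomplete Gauss sums, the resulting ``arithmetic factor'' is genuinely a hyper-Kloosterman-type object, and its collapse to the classical sums $S(md,\pm n_2;mc/n_1)$ with the divisor structure $n_1\mid cm$ is the central content of \cite{MS} and \cite{GolfeldLi}, not bookkeeping that can be delegated to a Gauss-sum computation. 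So as a self-contained proof the proposal has a gap precisely at the point you flag as an ``obstacle''; as a justification at the level the paper itself operates (quoting the theorem with attribution), it is adequate, and the analytic caveats you list (continuation and vertical growth before interchanging sum and integral, rapid decay of $\Psi_\pm$ via Stirling, absence of poles by cuspidality) are the right ones.
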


Now we recall the spectral theory of automorphic forms for $\SL{2}{Z}$.  Let $u_j(z)$ be an orthonormal basis of Hecke-Maass cusp form for $\SL{2}{Z}$ (as in \cite{IwaniecSpectral}, p.117).  Write the Fourier expansion as (see (3.4) and (1.26) of \cite{IwaniecSpectral})
\begin{equation}
\label{eq:ujFourier}
 u_j(z) = \sum_{n \neq 0} \rho_j(n) W_{\half + it_j}(n z), \quad z = \begin{pmatrix} 1 & x \\ & 1 \end{pmatrix} \begin{pmatrix} y &  \\ & 1 \end{pmatrix} y^{-\half},
\end{equation}
where
\begin{equation}
 W_{\half+i\tau}(z) = 2 \sqrt{|y|} K_{i\tau} (2 \pi |y|) e(x)
\end{equation}
and $K_{i \tau}$ is the usual $K$-Bessel function.  Let $\lambda_j(n)$ be the $n$-th Hecke eigenvalue of $u_j$, whence
\begin{equation}
\label{eq:rholambda}
 \rho_j(\pm n) = \rho_j(\pm 1) \lambda_j(n) |n|^{-\half}.
\end{equation}
The scaling is such that $\lambda_j(1) = 1$ and the Ramanujan conjecture implies $|\lambda_j(p)| \leq 2$ for $p$ prime.  By \cite{IwaniecSmall}, \cite{HL}, we have
\begin{equation}
\label{eq:rhoj1}
 t_j^{-\varepsilon} \ll \alpha_j:=\frac{|\rho_j(1)|^2}{\cosh(\pi t_j)} \ll t_j^{\varepsilon},
\end{equation}
which establishes the scaling of $|\rho_j(1)|^2$ in terms of $t_j$.  In this work we do not require the bounds \eqref{eq:rhoj1}, but we mention them since it is illuminating to understand the scaling, and to contrast the behavior with $\SL{3}{Z}$ Maass forms.  We return to this discussion in Section \ref{section:RS}. 

Next we discuss the continuous spectrum.  The $\SL{2}{Z}$ Eisenstein series is defined by
\begin{equation}
 E(z_2,s) = \half \sum_{c,d \in \mz, (c,d) = 1} \frac{y_2^s}{|cz_2 + d|^{2s}} = \half \sum_{\gamma \in U_2(\mathbb{Z}) \backslash \SL{2}{Z}} \text{Im}(\gamma z_2)^s,
\end{equation}
and has the Fourier expansion (see Theorem 3.4 of \cite{IwaniecSpectral})
\begin{equation}
\label{eq:EisensteinFourier}
 E(z_2,s) = y_2^s + \rho(s) y_2^{1-s} + \sum_{n \neq 0} \rho(n,s) W_s(nz_2),
\end{equation}
where 
\begin{equation}
 \rho(s) = \sqrt{\pi} \frac{\Gamma(s-\half)}{\Gamma(s)} \frac{\zeta(2s-1)}{\zeta(2s)}, \qquad 
 \rho(n,s) = \pi^s \frac{\lambda(n,s)}{\Gamma(s) \zeta(2s) |n|^{\half}},
\end{equation}
and
\begin{equation}
 \lambda(n,s) = \sum_{ad = |n|} \leg{a}{d}^{s-\half}.
\end{equation}
Observe $\overline{\lambda}(n, \half + i\tau) =\lambda(n, \half + i\tau)$ for real $\tau$.  The analog of \eqref{eq:rhoj1} for the Eisenstein series is essentially a classical fact about the Riemann zeta function (see \cite{T}, (3.5.1) and (3.6.5)) that
$\tau^{-\varepsilon} \ll \zeta(1 + 2i \tau) \ll \tau^{\varepsilon}$, giving
\begin{equation}
\label{eq:rhoj1E}
 \tau^{-\varepsilon} \ll \alpha_{\tau}:=\frac{|\rho(1, \half + i \tau)|^2}{\cosh(\pi \tau)} = \frac{1}{|\zeta(1+2i \tau)|^2} \ll \tau^{\varepsilon}.
\end{equation}

We recall the well-known spectral theorem; see \cite{IwaniecSpectral} for example.
\begin{mytheo}
Suppose $f \in L^2(\SL{2}{Z} \backslash \mh)$.  Then
\begin{equation}
\label{eq:GL2spectral}
 f(z) = \sum_{j \geq 0} \langle f, u_j \rangle u_j(z) + \frac{1}{4 \pi} \intR \langle f, E(\cdot, \half + it) \rangle E(z, \half + it) dt.
\end{equation}
If $f$ and $\Delta f$ are smooth and bounded then the sum converges absolutely and uniformly on compact sets.  
Furthermore, the Parseval formula says
\begin{equation}
\label{eq:Parseval}
 ||f||^2 = \sum_{j \geq 0} |\langle f, u_j \rangle|^2 + \frac{1}{4 \pi} \intR |\langle f, E(\cdot, \half + it) \rangle|^2 dt.
\end{equation}
\end{mytheo}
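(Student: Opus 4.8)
The statement is classical, so the plan is simply to recall the standard argument (see \cite{IwaniecSpectral}) rather than to develop anything new. First I would split $L^2(\SL{2}{Z}\backslash\mh)$ orthogonally into three pieces: the constant functions, the cuspidal subspace $L^2_{\text{cusp}}$ consisting of those $f$ with vanishing constant term $\int_0^1 f(x+iy)\,dx$, and the orthogonal complement $\mathcal{E}$ of these two. For the cuspidal part the plan is to show that a suitable $\SL{2}{\mr}$-invariant integral operator (equivalently the resolvent of $\Delta$, or the heat kernel) restricts to a compact self-adjoint operator on $L^2_{\text{cusp}}$: the key point is that cusp forms decay rapidly high in the cusp, so the relevant automorphic kernel, suitably truncated, is Hilbert--Schmidt there. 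The spectral theorem for compact self-adjoint operators then produces an orthonormal basis $\{u_j\}$ of $\Delta$-eigenfunctions with eigenvalues tending to infinity; simultaneously diagonalizing with the commuting Hecke operators gives the Hecke--Maass forms appearing in \eqref{eq:GL2spectral}.

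For the continuous part, the first step is to establish the meromorphic continuation and functional equation of $E(z,s)$ to the line $\text{Re}(s)=\half$, with its only pole in $\text{Re}(s)\geq\half$ a simple pole at $s=1$ of constant residue; for $\SL{2}{Z}$ this can be read off directly from the Fourier expansion \eqref{eq:EisensteinFourier} using the continuation of $\zeta(2s)$ and $\zeta(2s-1)$. Next I would work with incomplete Eisenstein series $E(z\mid\phi) = \sum_{\gamma\in U_2(\mz)\backslash\SL{2}{Z}}\phi(\text{Im}(\gamma z))$ with $\phi\in C_c^\infty(\mr^{+})$, check that together with the constants they span $\mathcal{E}$, expand $\phi$ by Mellin inversion, and shift the contour from $\text{Re}(s)=\sigma>1$ down to $\text{Re}(s)=\half$: the residue at $s=1$ contributes a constant, while the remaining integral exhibits $E(z\mid\phi)$ as a superposition of the $E(z,\half+it)$ against a rapidly decaying weight. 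Computing $\langle E(\cdot,\half+it),E(\cdot,\half+it')\rangle$ via the Maass--Selberg relations shows that, as a distribution in $(t,t')$, this pairing is supported on $t=\pm t'$, and the normalization here is exactly what produces the factor $\frac{1}{4\pi}$ in \eqref{eq:GL2spectral}. Assembling the three pieces gives the decomposition for general $f\in L^2$, and the Parseval identity \eqref{eq:Parseval} then follows formally from the orthogonality of the three subspaces, the orthonormality of $\{u_j\}$, and the continuous analogue on the Eisenstein side.

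For the final assertion, on the convergence of \eqref{eq:GL2spectral} when $f$ and $\Delta f$ are smooth and bounded, the plan is to integrate by parts in the spectral coefficients, using $\lambda_j^{k}\langle f,u_j\rangle=\langle\Delta^{k}f,u_j\rangle$ (and the analogue for the Eisenstein coefficients), which by smoothness of $f$ forces the coefficients to decay rapidly in $j$ and in $t$; combining this with a crude uniform pointwise bound for $u_j(z)$ and $E(z,\half+it)$ on a fixed compact set (for instance from the local Weyl law) then makes the sum and integral converge absolutely and uniformly on compacta. The one place where genuine work is needed is the continuous spectrum: the cuspidal decomposition is soft functional analysis, but producing $\mathcal{E}$ as the span of the Eisenstein packets relies essentially on the analytic continuation of $E(z,s)$ and on the Maass--Selberg computation of its inner products, so that is where I would expect the main obstacle to lie.
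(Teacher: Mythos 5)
Your outline is the standard proof recorded in \cite{IwaniecSpectral} (compactness of an invariant integral operator on the cuspidal subspace, incomplete Eisenstein series together with the analytic continuation of $E(z,s)$ and the Maass--Selberg relations for the continuous spectrum, and the local Weyl law plus self-adjointness of $\Delta$ for the uniform convergence statement), which is exactly the reference the paper invokes; the paper itself gives no proof of this theorem, simply quoting it as the well-known spectral theorem. So your proposal is correct in its essentials and coincides with the argument the paper implicitly relies on.
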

We also recall the Kuznetsov formula (Theorem 9.3 of \cite{IwaniecSpectral}).
\begin{mytheo}[Kuznetsov]
\label{thm:Kuznetsov}
Let notation $\alpha_j, \lambda_j(n), \alpha_{\tau}, \lambda(n, \half + i\tau)$ be defined as above, and suppose $h(r)$ satisfies
\begin{equation}
 \begin{cases}
  h(r) = h(-r), \\
 h \text{ is holomorphic in } |Im(r)| \leq \half + \delta, \\
 h(r) \ll (1 + |r|)^{-2-\delta},
 \end{cases}
\end{equation}
for some $\delta > 0$.  Then we have
\begin{multline}
 \sum_{j \geq 1} \alpha_j \lambda_j(m) \lambda_j(n) h(t_j) + \frac{1}{ 4 \pi} \intR \alpha_{\tau} \lambda(m, \thalf + i\tau) \overline{\lambda}(n, \thalf + i\tau) h(\tau) d\tau
=
\\
\delta_{m,n} \pi^{-2}  \intR r \tanh(\pi r) h(r) dr 
+
\sum_{c=1}^{\infty} \frac{S(m,n;c)}{c} H\leg{4 \pi \sqrt{mn}}{c}, 
\end{multline}
where
\begin{equation}
\label{eq:Hdef}
 H(x) = \frac{2i}{\pi} \intR r h(r) \frac{J_{2ir}(x)}{\cosh(\pi r)} dr = \frac{2i}{\pi} \int_0^{\infty} r h(r) \frac{J_{2ir}(x) - J_{-2ir}(x)}{\cosh(\pi r)} dr.
\end{equation}
\end{mytheo}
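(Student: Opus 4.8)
The plan is to recognize both sides of the Kuznetsov identity as two evaluations of the inner product $\langle P_m, P_n \rangle$ of a pair of incomplete Poincar\'e series on $\SL{2}{Z} \backslash \mh$. Fix a sufficiently nice test function $\psi$ on $(0,\infty)$ (decaying at $0$ and $\infty$) and, for $m \geq 1$, set
\begin{equation*}
 P_m(z) = \sum_{\gamma \in U_2(\mz) \backslash \SL{2}{Z}} \psi(\mathrm{Im}(\gamma z)) \, e\big(m \, \mathrm{Re}(\gamma z)\big),
\end{equation*}
which converges absolutely and lies in $L^2(\SL{2}{Z}\backslash\mh)$, and similarly for $P_n$. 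The idea is that unfolding gives the geometric (Kloosterman) side, while Parseval against the $\SL{2}{Z}$ spectrum gives the spectral side; the two must agree, and the precise shape of the identity is then pinned down by inverting the Bessel transform that relates $\psi$ to the test function $h$.

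\emph{Geometric side.} Unfold one copy: $\langle P_m, P_n \rangle = \int_{U_2(\mz)\backslash \mh} \psi(\mathrm{Im}\, z)\, \overline{e(m\, \mathrm{Re}\, z)}\, P_n(z)\, d\mu(z)$, and decompose $U_2(\mz)\backslash \SL{2}{Z}/U_2(\mz)$ via the Bruhat decomposition. The trivial double coset contributes, after carrying out the $x$- and $y$-integrals, a diagonal term proportional to $\delta_{m,n}$ times an integral of $\psi$ against itself; each coset indexed by its lower-left entry $c \neq 0$ contributes, after opening the Fourier integral in $x$ and collecting residues mod $c$, a Kloosterman sum $S(m,n;c)$ times an explicit integral transform $\Phi_\psi(4\pi\sqrt{mn}/c)$ of $\psi$ (a Hankel/Bessel-type transform). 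This reproduces the right-hand side $\delta_{m,n}(\cdots) + \sum_{c\geq 1} c^{-1} S(m,n;c)\, \Phi_\psi(4\pi\sqrt{mn}/c)$.

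\emph{Spectral side.} Apply the Parseval formula \eqref{eq:Parseval} to $\langle P_m, P_n\rangle$. Each cuspidal term involves $\langle P_m, u_j\rangle$, which unfolds against \eqref{eq:ujFourier} to pick out the Fourier coefficient $\rho_j(m)$ times a Mellin-type integral of $\psi$ against $K_{it_j}(2\pi m y)$; using \eqref{eq:rholambda} this becomes $\rho_j(1)\lambda_j(m)m^{-\half}$ times a transform $\widehat\psi(t_j)$, so the cuspidal contribution collapses to $\sum_j \alpha_j \lambda_j(m)\lambda_j(n)\, \widehat\psi(t_j)$ with $\alpha_j$ as in \eqref{eq:rhoj1}. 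The Eisenstein part is handled identically using \eqref{eq:EisensteinFourier} and the explicit $\rho(n,s)$, producing the integral $\frac{1}{4\pi}\intR \alpha_\tau \lambda(m,\thalf+i\tau)\overline{\lambda}(n,\thalf+i\tau)\,\widehat\psi(\tau)\, d\tau$, with $\alpha_\tau = |\zeta(1+2i\tau)|^{-2}$ as in \eqref{eq:rhoj1E}.

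\emph{Matching and the general test function.} Equating the two computations gives, for every admissible $\psi$, an identity of exactly the stated form with $h = \widehat\psi$ on the spectral side and $H = \Phi_\psi$ on the geometric side. It remains to check that these two transforms of $\psi$ are linked precisely by \eqref{eq:Hdef}, and that the map $\psi \mapsto \widehat\psi$ is onto the class of $h$ satisfying the stated holomorphy and decay hypotheses, so the formula holds for arbitrary such $h$; this is the content of the Sears--Titchmarsh (Kontorovich--Lebedev) inversion for the $K$-Bessel transform, after which $H$ is read off from $h$ via the $K$-to-$J$ Bessel transform appearing in \eqref{eq:Hdef}. The main obstacle is precisely this analytic step: controlling the Bessel transforms uniformly (asymptotics of $J_{2ir}(x)$ and $K_{ir}(y)$ in both the index and the argument), justifying the interchange of sum and integral in the ranges where convergence is only conditional, and bounding the continuous-spectrum contribution. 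Since all of this is standard, in practice one simply invokes Theorem 9.3 of \cite{IwaniecSpectral} rather than reconstructing the argument.
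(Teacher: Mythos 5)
Your proposal is fine: the paper itself gives no proof of this statement, quoting it verbatim as Theorem 9.3 of \cite{IwaniecSpectral}, and your final step is to invoke exactly that citation, so in effect you take the same route. The preceding sketch (Poincar\'e series $P_m, P_n$, unfolding/Bruhat for the Kloosterman side, Parseval for the spectral side, then Sears--Titchmarsh inversion to reach arbitrary admissible $h$) is the standard argument behind the cited theorem and is correctly outlined, with the genuinely hard analytic points honestly flagged rather than glossed over.
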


\begin{mylemma}[Approximate functional equation]
\label{lemma:AFE}
 Let $L(s,f) = \sum_{n \geq 1} \lambda_f(n) n^{-s}$ be an $L$-function as in Chapter 5 of \cite{IK} such that the completed $L$-function is entire.  Let $q(f,s)$ denote the analytic conductor of $L(f,s)$ as defined on p.95 of \cite{IK}, and suppose that $q(f,s) \leq Q$ for some number $Q > 0$.  Then there exists a function $W(x)$ depending on $Q$ and $\varepsilon$ only, such that $W$ is supported on $x \leq Q^{\half + \varepsilon}$ and satisfying
\begin{equation}
\label{eq:WAFE}
 x^j W^{(j)}(x) \ll_{j,\varepsilon} 1,
\end{equation}
where the implied constant depends on $j$ and $\varepsilon$ only (not $Q$), 
such that
\begin{equation}
 |L(\thalf + it,f)|^2 \ll Q^{\varepsilon} \int_{-\log{Q}}^{\log{Q}} |\sum_{n \geq 1} \frac{\lambda_f(n)}{n^{\half + it + iv}} W(n) |^2 dv 
+  O(Q^{-100}),
\end{equation}
where the implied constant depends on $\varepsilon$, $W$, and the degree of $L(f,s)$ only.
\end{mylemma}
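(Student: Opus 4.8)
The plan is to start from the standard approximate functional equation for $|L(\thalf+it,f)|^2$, which expresses this quantity as a sum over $n,m$ of $\lambda_f(n)\overline{\lambda_f(m)} (nm)^{-1/2} V(nm/X)$-type terms plus a dual sum, with $V$ a smooth cutoff whose Mellin transform involves the ratio of $\gamma$-factors of $L(s,f)$ at the two points. Concretely, following Chapter 5 of \cite{IK} (e.g. Theorem 5.3 applied to $L(s,f)L(s,\overline{f})$, or directly to $|L|^2$ via the usual contour shift), one writes
\begin{equation}
\notag
 |L(\thalf+it,f)|^2 = 2\sum_{n,m\geq 1} \frac{\lambda_f(n)\overline{\lambda_f(m)}}{(nm)^{1/2}} \, n^{-it} m^{it}\, V\!\left(\frac{nm}{X}\right) + (\text{dual term}),
\end{equation}
valid for a suitable smooth $V$ decaying rapidly past $X^{1+\varepsilon}$ where $X \ll q(f,\thalf+it)$; choosing the balancing parameter and using $q(f,s)\leq Q$ lets us truncate at $nm \leq Q^{1+\varepsilon}$ up to an error $O(Q^{-100})$. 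The key structural point is that the diagonal structure $n^{-it}m^{it}$ is exactly what one gets by squaring a Dirichlet polynomial, so the goal is to exhibit $V(nm/X)$ as (essentially) $\int W(n)W(m)\,dv$ for an appropriate $W$ and an averaging over a shift $v$.

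The mechanism for this is the standard trick of writing the smooth weight as an integral over a vertical line and then over the parameter $v$: one takes $W(x)$ to be a fixed smooth function, independent of $Q$, supported on $x\leq Q^{1/2+\varepsilon}$ (say a smoothed indicator of $[0,Q^{1/2+\varepsilon}]$ built so that $x^j W^{(j)}(x)\ll_{j,\varepsilon} 1$ uniformly in $Q$ — this is possible since the only $Q$-dependence is the location of the cutoff, and $x\,\frac{d}{dx}$ is scale-invariant). Then
\begin{equation}
\notag
 \Big|\sum_{n\geq 1}\frac{\lambda_f(n)}{n^{1/2+it+iv}}W(n)\Big|^2 = \sum_{n,m\geq 1}\frac{\lambda_f(n)\overline{\lambda_f(m)}}{(nm)^{1/2}} n^{-it-iv} m^{it+iv} W(n)W(m),
\end{equation}
and integrating over $v\in[-\log Q,\log Q]$ the factor $(n/m)^{-iv}$ produces a kernel $\frac{2\sin(\log(m/n)\log Q)}{\log(m/n)}$ which concentrates $n$ and $m$ at distance $\ll 1/\log Q$ of each other and has total mass $\asymp \log Q$ on the "diagonal band." Bounding the off-diagonal contributions by positivity / Cauchy–Schwarz against this kernel, and comparing with the diagonal $n\asymp m$ contribution in the approximate functional equation (where $V(nm/X)\asymp 1$), one obtains the stated inequality with a loss of at most $Q^{\varepsilon}$ absorbing the $\log Q$ and the length-of-$v$-interval factors. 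The dual term is handled identically by its own functional equation (it has the same shape with $\lambda_f$ replaced by $\overline{\lambda_f}$ and contributes the same way), and since we only want an upper bound we may simply bound $|L|^2$ by a constant times one such Dirichlet-polynomial square.

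The main obstacle — and the only genuinely nonroutine point — is arranging that $W$ depends on $Q$ \emph{only} through the support condition $x\leq Q^{1/2+\varepsilon}$, with the derivative bounds \eqref{eq:WAFE} holding with constants independent of $Q$, while still capturing enough of the true weight $V$ (which genuinely depends on the $\gamma$-factors of $f$, not just on $Q$) to dominate $|L|^2$. The resolution is that we do \emph{not} need $W$ to equal $V$: because we seek only an upper bound, it suffices that the diagonal of $|\sum \lambda_f(n)n^{-1/2-it-iv}W(n)|^2$, after the $v$-average, is bounded \emph{below} by a constant times the diagonal contribution to $|L(\thalf+it,f)|^2$, which follows from choosing $W$ to be $\asymp 1$ on a large enough range $[1, Q^{1/2+\varepsilon}]$. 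Everything else — positivity of $\alpha_j,\alpha_\tau$ is not needed here; rather positivity of the summand $|{\cdot}|^2$ and non-negativity of the sine-kernel mass on the diagonal — is soft. The remaining steps (justifying the contour shift for the approximate functional equation, controlling the tails past $Q^{1+\varepsilon}$ by the rapid decay of $V$, and the Stirling estimate $q(f,\thalf+it)\ll Q$) are all standard and appear in \cite{IK}.
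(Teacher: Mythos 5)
There is a genuine gap at the heart of your argument, namely the step where you pass from the bilinear (squared) approximate functional equation for $|L(\thalf+it,f)|^2$ to the $v$-averaged Dirichlet polynomial. You propose to justify the inequality by checking that the \emph{diagonal} of $\int_{-\log Q}^{\log Q}|\sum_n \lambda_f(n)n^{-\half-it-iv}W(n)|^2\,dv$ dominates the \emph{diagonal} contribution to $|L(\thalf+it,f)|^2$. But matching diagonals proves nothing about $|L(\thalf+it,f)|^2$ itself: the off-diagonal terms $n\neq m$ in the expansion of $|L|^2$ carry coefficients $\lambda_f(n)\overline{\lambda_f(m)}$ of no fixed sign, so they cannot be discarded or absorbed ``by positivity,'' and they are exactly what makes $|L(\thalf+it,f)|^2$ fluctuate up to the convexity bound $\approx Q^{1/2}$ while the diagonal is only $\sum_{n\ll Q}|\lambda_f(n)|^2/n$. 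If $|L(\thalf+it,f)|^2$ were bounded by $Q^{\varepsilon}$ times its diagonal, that would already be the Lindel\"of hypothesis, which is precisely what cannot be assumed here. Nor does the kernel $2\sin(\log(m/n)\log Q)/\log(m/n)$ produced by the $v$-average dominate the weight $V(nm/X)$ off the diagonal: it only decays like $1/|\log(m/n)|$, whereas the squared functional equation has substantial weight at widely separated $n,m$ with $nm\ll Q^{1+\varepsilon}$. So the comparison ``RHS diagonal $\gtrsim$ LHS diagonal $\Rightarrow$ RHS $\gtrsim$ LHS'' does not go through, and this is the only non-routine point of the lemma, so the proof as written fails.

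The way the paper avoids this is to never square before the $f,t$-dependence has been removed. One starts from the usual (linear) approximate functional equation \eqref{eq:usualAFE} for $L(\thalf+it,f)$, inserts a fixed smooth cutoff $W_0$ supported on $n\leq 2Q^{\half+\varepsilon}$ (this costs only $O(Q^{-100})$ because $V_{f,t}(x/\sqrt{q})$ is tiny past the conductor, by a contour shift and Stirling), then opens $V_{f,t}$ via its integral representation \eqref{eq:Vdef} on the line $\mathrm{Re}(u)=\sigma$ and exchanges sum and integral. After truncating $|\mathrm{Im}(u)|\leq\log Q$ (using the decay of $e^{u^2}$) and bounding the kernel by Stirling, $q^{u/2}\gamma(f,\half+it+u)/\gamma(f,\half+it)\ll Q^{\mathrm{Re}(u)/2}e^{\pi d|u|/2}$, one applies Cauchy--Schwarz to the short $u$-integral; this yields $|L(\thalf+it,f)|^2\ll Q^{\varepsilon}\int_{-\log Q}^{\log Q}|\sum_n\lambda_f(n)W(n)n^{-\half-it-iv}|^2\,dv+O(Q^{-100})$ with $W(n)=W_0(n)n^{-\sigma}$, $\sigma=\varepsilon$, which satisfies \eqref{eq:WAFE} uniformly in $Q$. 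The point is that the genuine $f,t$-dependence lives entirely in the gamma-factor kernel, which is only ever estimated in absolute value, while the $v$-integral stays \emph{outside} the square, so the resulting bound is an honest application of Cauchy--Schwarz rather than a diagonal comparison. If you restructure your argument along these lines (keep $L$ linear, push the $\gamma$-factors into an $L^1$-bounded kernel, and only then square), the rest of your outline -- the choice of a $Q$-independent $W$ via scale invariance of $x\,d/dx$, the truncation at $Q^{\half+\varepsilon}$, and the treatment of the dual term -- is fine.
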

Remark.  The power of positivity makes this formulation extremely simple;  an exact formula for $|L(\thalf + it, f)|^2$ would be much more complicated.  The point is that $W$ does not vary with $f$ and $t$ as long as $q(f, \half + it) \leq Q$.
\begin{proof}
 The usual approximate functional equation (Theorem 5.3 of \cite{IK}) gives
\begin{equation}
\label{eq:usualAFE}
 L(\thalf + it,f ) = \sum_{n \geq 1} \frac{\lambda_f(n)}{n^{\half + it}} V_{f,t}(n/\sqrt{q}) + \epsilon_{f,t} \sum_{n \geq 1} \frac{\overline{\lambda_f(n)}}{n^{\half - it}} V_{f,-t}^{*}(n/\sqrt{q}),
\end{equation}
where $q$ is the archimedean part of the conductor (see \cite{IK}, p.94), 
\begin{equation}
\label{eq:Vdef}
 V_{f,t}(x) = \frac{1}{2 \pi i} \int_{(2)} x^{-u} 
\frac{\gamma(f,\half + it + u)}{\gamma(f, \half + it)}
e^{u^2} \frac{du}{u},
\end{equation}
in which
\begin{equation}
 \gamma(f,s) = \pi^{-ds/2} \prod_{j=1}^{d} \Gamma\big(\frac{s+\kappa_j}{2}\big),
\end{equation}
$V_{f,t}^*$ is given by 
\begin{equation}
 V_{f,t}^*(x) = \frac{1}{2 \pi i} \int_{(2)} x^{-u} 
\frac{\gamma(\overline{f},\half + it + u)}{\gamma(\overline{f}, \half + it)}
e^{u^2} \frac{du}{u},
\end{equation}
and $\varepsilon_{f,t}$ is a complex number with absolute value $1$.  Note that there is a misprint on p.94 \cite{IK}, since $\gamma(f,s) \neq \gamma(\overline{f}, s)$ in general; the correct statement is that if $f$ has parameters $\{ \kappa_1, \dots, \kappa_d \}$, then its dual has parameters $\{\overline{\kappa_1}, \dots, \overline{\kappa_d} \}$ (see (2.8) of \cite{RS} for example).

Shifting the contour to the right and using Stirling's approximation shows that $V_{f,t}(x/\sqrt{q})$ is very small for $x \geq q(f, \thalf+ it)^{\half + \varepsilon}$.  We choose a $W_0$ satisfying \eqref{eq:WAFE} such that multiplication by $W_0(n)$ in \eqref{eq:usualAFE} introduces an error of size $O(Q^{-100})$ to the value of $L(\half + it, f)$; for example, one can take $W_0$ to be identically $1$ for $n \leq Q^{\half + \varepsilon}$ and then have it smoothly decay to be zero for $n \geq 2 Q^{\half + \varepsilon}$.  Having inserted this weight into the $n$-sums, we then apply the integral representation definition of $V_{f,t}(n/\sqrt{q})$ (shifted to the point $\sigma>0$ to be chosen later) and reverse the orders of summation and integration.  Using Cauchy's inequality, we obtain
\begin{multline}
 |L(\thalf + it, f)|^2 \leq \frac{2}{2\pi} \Big| \int_{(\sigma)} 
q^{u/2}
\frac{\gamma(f,\half + it + u)}{\gamma(f, \half + it)}
\frac{e^{u^2}}{u}
\sum_{n \geq 1} \frac{\lambda_f(n) W_0(n)}{n^{\half + it + u}}  
   du \Big|^2
\\
+ (\text{similar}) + O(Q^{-100}),
\end{multline}
where the ``similar'' term has $\lambda_f(n)$ replaced by $\overline{\lambda_f(n)}$, and $t$ replaced by $-t$.  The integrand decays very rapidly as a function of $\text{Im}(u)$ due to the exponential decay of $e^{u^2}$, so that we can truncate the $u$-integrals at $|\text{Im}(u)| \leq \log{Q}$ without making a new error term.  By Stirling's formula we have (see p.100 of \cite{IK})
\begin{equation}
q^{u/2} \frac{\gamma(f,\half + it + u)}{\gamma(f, \half + it)} \ll Q^{\text{Re}(u)/2} \exp(\frac{\pi d}{2} |u|).
\end{equation}
Thus
\begin{equation}
 |L(\thalf + it, f)|^2 \ll_{\sigma} Q^{\sigma} \int_{u=\sigma + iv, |v| \leq \log{Q}}  \Big|
\sum_{n \geq 1} \frac{\lambda_f(n) W_0(n)}{n^{\half + it + u}}  
  \Big|^2 dv + (\text{similar}) + O(Q^{-100}).
\end{equation}
Letting $W(n) = W_0(n) n^{-\sigma}$, taking $\sigma = \varepsilon$, and noting that the ``similar'' term is actually identical to the displayed term (it is the complex conjugate), we finish the proof.
\end{proof}

\section{Rankin-Selberg $L$-functions}
\label{section:RS}
In this work we require knowledge of various types of Rankin-Selberg $L$-functions.  In particular, we need the explicit integral representation, functional equation, and the connection with the $L^2$ norm.

It is instructive to first recall the well-known case of $GL_2 \times GL_2$.  For this, we have the following integral representation, if $u_j$ is even or odd:
\begin{equation}
\label{eq:ujujRS}
\zeta(2s) \int_{\SL{2}{Z} \backslash \mathbb{H}} |u_j(z)|^2 E(z,s) \frac{dx dy}{y^2} = 2^{-1} \pi^{-s} \frac{\Gamma(\frac{s}{2})^2}{\Gamma(s)} |\rho_j(1)|^2 \Gamma(\tfrac{s}{2} - it_j) \Gamma(\tfrac{s}{2} + it_j) L(s, u_j \times \overline{u_j}),
\end{equation}
where
\begin{equation}
 L(s, u_j \times \overline{u_j}) = \zeta(2s)\sum_{n=1}^{\infty} \frac{|\lambda_j(n)|^2}{n^s}.
\end{equation}
This is derived by the unfolding method and from explicit knowledge of the Mellin transform of the product of two $K$-Bessel functions.  In this way, we deduce a functional equation for $L(s, u_j \times \overline{u_j})$ from that of the Eisenstein series.

On the other hand, the Fourier expansion \eqref{eq:EisensteinFourier} shows that the Eisenstein series has a simple pole at $s=1$ with residue $\frac{\pi}{2 \zeta(2)}$.  Thus taking the residues of both sides of \eqref{eq:ujujRS}, we have that 
\begin{equation}
\label{eq:GL2L2RS}
1 = \langle u_j, u_j \rangle =  \frac{|\rho_j(1)|^2}{\cosh(\pi t_j)} \text{Res}_{s=1} L(s, u_j \times \overline{u_j}).
\end{equation}
Thus upper/lower bounds on the residue of the $L$-function correspond to lower/upper (respectively) bounds on $|\rho_j(1)|^2$.  It is well-known that $L(s, u_j \times \overline{u_j}) = \zeta(s) L(s, \text{Sym}^2 u_j)$, where $L(s, \text{Sym}^2 u_j) = \zeta(2s) \sum_n \lambda_j(n^2) n^{-s}$ is the Gelbart-Jacquet lift \cite{GJ} of $u_j$, which is known to correspond to a self-dual $\SL{3}{Z}$ Maass form.  Then estimates for the $L$-functions translate to estimates on \eqref{eq:rhoj1}.

It is less classical to understand the behavior of the first Fourier coefficient of $F$, a $\SL{3}{Z}$ Maass form.  For this, we have
\begin{myprop}
\label{prop:Landau}
 Let $F$ be a $\SL{3}{Z}$ Hecke-Maass form that is in the tempered spectrum of $\Delta$.  Then for some absolute constant $c > 0$, we have
\begin{equation}
\label{eq:GL3L2RS}
 \langle F, F \rangle = c  |A_F(1,1)|^2 \text{Res}_{s=1} L(s, F \times \overline{F}),
\end{equation}
where we write $A_F(m,n) = A_F(1,1) \lambda_F(m,n)$, and
\begin{equation}
 L(s, F \times \overline{F}) = \zeta(3s) \sum_{m \geq 1} \sum_{n \geq 1} \frac{|\lambda_F(m,n)|^2}{(m^2 n)^s}.
\end{equation}
\end{myprop}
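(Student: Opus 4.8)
The proof will proceed by the Rankin--Selberg method, in close analogy with the $GL_2$ identity \eqref{eq:ujujRS}--\eqref{eq:GL2L2RS} recalled above. The plan is to integrate $|F|^2$ against a $GL_3$ Eisenstein series, unfold the integral into a product of $|A_F(1,1)|^2$, the Dirichlet series $\sum_{m,n}|\lambda_F(m,n)|^2(m^2n)^{-s}$, and an archimedean factor, and then compare residues at $s=1$: on one side the Eisenstein series supplies a pole with constant residue proportional to $\langle F,F\rangle$, and on the other side $L(s,F\times\overline F)$ has its pole.

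First I would fix a suitably normalized $GL_3$ Eisenstein series $E(z,s)$ on $\SL{3}{Z}\backslash\mathcal{H}^3$, attached to a maximal parabolic, having a simple pole at $s=1$ whose residue is a nonzero constant function of $z$ --- an absolute constant divided by $\mathrm{vol}(\SL{3}{Z}\backslash\mathcal{H}^3)$, as is standard for Eisenstein series at the edge of the region of absolute convergence (compare the $GL_2$ series of \eqref{eq:EisensteinFourier}, whose residue at $s=1$ is $\tfrac{\pi}{2\zeta(2)}$). Since $F$ is cuspidal, $|F(z)|^2$ is rapidly decreasing on $\SL{3}{Z}\backslash\mathcal{H}^3$, so
\[
 I(s):=\int_{\SL{3}{Z}\backslash\mathcal{H}^3}|F(z)|^2\,E(z,s)\,d^{*}z
\]
converges absolutely wherever $E(\cdot,s)$ is holomorphic; hence $I(s)$ is meromorphic with at worst a simple pole at $s=1$, and since $|F|^2\,d^{*}z$ is a finite measure one may move the residue inside the integral to obtain $\mathrm{Res}_{s=1}I(s)=c_1\langle F,F\rangle$ for an absolute $c_1>0$.

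Next comes the unfolding. For $\mathrm{Re}(s)$ large, replace $E(z,s)$ by its defining sum over $P(\mz)\backslash\SL{3}{Z}$, fold it into the integral, and insert the Fourier--Whittaker expansion \eqref{eq:Fourierexpansion} of $F$; the non-abelian Fourier terms collapse in the standard Rankin--Selberg manner, leaving a sum over the Hecke data times an archimedean integral of $|W_J(\cdot,\nu,\psi_{1,1})|^2$ against a power of $y_1,y_2$. Using $A_F(m,n)=A_F(1,1)\lambda_F(m,n)$, the arithmetic part becomes $|A_F(1,1)|^2\sum_{m,n\ge1}|\lambda_F(m,n)|^2(m^2n)^{-s}$, which by the definition in Proposition \ref{prop:Landau} equals $\zeta(3s)^{-1}|A_F(1,1)|^2\,L(s,F\times\overline F)$; the archimedean integral equals an explicit ratio $G_\infty(s)$ of Gamma functions in $s,\alpha,\beta,\gamma$ --- the Mellin transform of the square of a $GL_3$ Whittaker function, for which I would invoke the standard computations (cf.\ \cite{Goldfeld}). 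After meromorphic continuation this gives
\[
 I(s)=\frac{G_\infty(s)}{\zeta(3s)}\,|A_F(1,1)|^2\,L(s,F\times\overline F).
\]

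Finally I take residues at $s=1$. By Jacquet--Shalika, $L(s,F\times\overline F)$ has a simple pole there (equivalently $F\otimes\overline F\cong\mathbf{1}\oplus\mathrm{Ad}F$ with $L(s,\mathrm{Ad}F)$ entire, so $L(s,F\times\overline F)=\zeta(s)L(s,\mathrm{Ad}F)$); and temperedness forces $\alpha,\beta,\gamma\in\mr$, so every Gamma factor in $G_\infty(s)$ has argument of strictly positive real part at $s=1$, whence $G_\infty(1)$ is finite and nonzero, while $\zeta(3)\neq0$. Equating residues in the displayed identity yields $c_1\langle F,F\rangle=\frac{G_\infty(1)}{\zeta(3)}|A_F(1,1)|^2\,\mathrm{Res}_{s=1}L(s,F\times\overline F)$, i.e.\ the asserted formula with $c=G_\infty(1)/(c_1\zeta(3))>0$. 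The step I expect to be the main obstacle is the unfolding itself: treating the non-abelian Fourier coefficients correctly, identifying the arithmetic factor precisely as $L(s,F\times\overline F)$, and tracking the normalization of the Whittaker Mellin transform carefully enough to confirm that the archimedean and elementary factors at $s=1$ combine into a genuinely absolute constant $c$. The analytic points --- absolute convergence of $I(s)$, interchanging residue and integral, and meromorphic continuation of the unfolded identity --- are routine given the rapid decay of $|F|^2$.
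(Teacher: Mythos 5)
Your overall strategy is the same as the paper's: integrate $|F|^2$ against the maximal-parabolic $GL_3$ Eisenstein series, unfold via the Rankin--Selberg method (Section 7.4 of \cite{Goldfeld} gives exactly your displayed identity, in the form \eqref{eq:RSformulaGL3}), and compare residues at $s=1$. The analytic steps you call routine (convergence, interchanging residue and integral, the residue $2/3$ of the completed Eisenstein series) are indeed routine and are handled the same way in the paper.

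However, there is a genuine gap at precisely the point you set aside as ``the main obstacle'': you never show that the archimedean factor $G_\infty(1)$ is an \emph{absolute} constant, independent of the spectral parameters $(\nu_1,\nu_2)$ of $F$. Finiteness and nonvanishing of $G_\infty(1)$ only yield $\langle F,F\rangle = c(F)\,|A_F(1,1)|^2\,\mathrm{Res}_{s=1}L(s,F\times\overline F)$ with a constant depending on $\alpha,\beta,\gamma$, which is strictly weaker than the Proposition and useless for the intended application (the whole point is to relate $|A_F(1,1)|$ to the residue uniformly in $F$). That this constant is absolute is not automatic: in the $GL_2$ analogue \eqref{eq:GL2L2RS} the corresponding factor is $\cosh(\pi t_j)^{-1}$, i.e.\ genuinely dependent on the form, as an artifact of the Whittaker normalization, and the paper also notes that for non-tempered $F$ the $GL_3$ constant is no longer absolute. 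The paper closes this gap by an explicit computation: it invokes Stade's formula \cite{StadeGL3GL3} for $\int\int |W_J|^2 (y_1^2y_2)^s\,\frac{dy_1\,dy_2}{y_1^3y_2^3}$, carefully converts between Stade's normalization $W_J^S$, Goldfeld's $W_J^*$ (which differs from the Jacquet--Whittaker function $W_J$ used in the Fourier expansion \eqref{eq:Fourierexpansion} by the Gamma factors $\Gamma(\tfrac{3\nu_1}{2})\Gamma(\tfrac{3\nu_2}{2})\Gamma(\tfrac{3\nu_1+3\nu_2-1}{2})$, see \eqref{eq:WJ*}), and then observes that under temperedness ($\alpha,\beta,\gamma$ real, so $3\nu_1=1-i(\beta-\gamma)$ etc.\ have purely imaginary shifts) the Gamma factors from Stade's evaluation at $s=1$ cancel exactly against those coming from the normalization conversion. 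Without carrying out this computation (or an equivalent one), the statement that $c$ is absolute --- which is the substance of the Proposition --- is not proved.
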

In contrast to \eqref{eq:GL2L2RS}, \eqref{eq:GL3L2RS} does not exhibit an external scaling factor analogous to $\frac{1}{\cosh(\pi t_j)}$ (an artifact of the definition of the Whittaker functions), so that assuming standard conjectures on the size of the residue at $s=1$ of $L(s, F \times \overline{F})$, we have $\lambda_F(\Delta)^{-\varepsilon} \ll |A_F(1,1)| \ll \lambda_F(\Delta)^{\varepsilon}$, 

The proof follows the same lines as \eqref{eq:ujujRS} but requires a much more difficult Archimedean integral involving the product of two $GL_{3}$ Whittaker functions.  This crucial integral was computed by Stade \cite{StadeGL3GL3}.

If $F$ is not tempered then the formula is not so clean and instead $c$ depends loosely on the form in the sense that $1 \ll c \ll 1$ with absolute implied constants.  By the way, a similar phenomenon already occurs in the $\SL{2}{Z}$ case if $t_j$ is not real.  We only assume the form is tempered at the end of the proof.
\begin{proof}
In Section 7.4 of \cite{Goldfeld}, it is shown that
\begin{equation}
\label{eq:RSformulaGL3}
 \zeta(3s) \langle F \overline{G}, E(\cdot, \overline{s}) \rangle = A_F(1,1) \overline{A_G(1,1)} L(s, F \times \overline{G}) G_{\nu, \nu'}(s),
\end{equation}
where $F$ and $G$ are $\SL{3}{Z}$ Hecke-Maass forms of types $\nu$ and $\nu'$, respectively, 
\begin{equation}
E(z, s) = \half \sum_{\gamma \in \widehat{\Gamma} \backslash \SL{3}{Z}} \det(\gamma z)^s,
\end{equation}
$\widehat{\Gamma}$ is the subset of elements of $\SL{3}{Z}$ with lower row $(0,0,1)$, 
\begin{equation}
 L(s, F \times \overline{G}) = \zeta(3s) \sum_{m \geq 1} \sum_{n \geq 1} \frac{\lambda_F(m,n) \overline{\lambda_G(m,n)}}{(m^2 n)^s},
\end{equation}
and
\begin{equation}
 G_{\nu,\nu'}(s) = \int_0^{\infty} \int_0^{\infty} W_J(y, \nu, \psi_{1,1}) \overline{W_J}(y, \nu', \psi_{1,1}) (y_1^2 y_2)^s \frac{dy_1 dy_2}{y_1^3 y_2^3}.
\end{equation}
Here we wrote $A(m,n) = A(1,1) \lambda(m,n)$ so that the scaling on $\lambda$ is such that $\lambda(1,1) = 1$ and the Ramanujan conjecture implies $|\lambda(1,p)| \leq 3$.  Stade \cite{StadeGL3GL3} computed this integral, but it is a little tricky to convert notation between \cite{StadeGL3GL3} and \cite{Goldfeld}.

First we state Stade's formula (\cite{StadeGL3GL3}, (1.2)), observing that Stade's $y_1$ and $y_2$ are switched compared to ours, and that his $a_1$ is our $i\gamma$, and his $a_2$ is our $i\beta$:
\begin{multline}
 \pi^{-3s/2} \Gamma(\frac{3s}{2}) \int_0^{\infty} \int_0^{\infty} W_J^S(y, \nu, \psi_{1,1}) W_J^S(y, \nu', \psi_{1,1}) (y_1^2 y_2)^s \frac{dy_1 dy_2}{y_1^3 y_2^3} 
\\
= \pi^{-9s/2} \prod_{j=1}^{3} \prod_{j'=1}^{3} \Gamma\big(\frac{s + i\alpha_j + i \alpha_{j'}'}{2}\big),
\end{multline}
where we write the Langlands parameters as $(\alpha,  \beta,  \gamma) = (\alpha_1, \alpha_2, \alpha_3)$, and $W_J^S$ denotes Stade's normalization of the Whittaker function (defined in \eqref{eq:WJS} below).  We need to convert between $W_J^S$ and $W_J$.  We can determine the normalization of Stade's Whittaker function from the integral representation \cite{StadeGL3GL3}, (1.1):
\begin{equation}
\label{eq:WJS}
 W_J^{S}(y, \nu, \psi_{1,1}) = 2^3 y_1^{1 + \frac{i \beta}{2}} y_2^{1 - \frac{i \beta}{2}} \int_0^{\infty} K_{\mu}(2 \pi y_1 \sqrt{1 + u}) K_{\mu}(2 \pi y_2 \sqrt{1 + u^{-1}}) u^{3i\beta/4} \frac{du}{u},
\end{equation}
where $\mu = \half (i\gamma - i\alpha)$.  Changing variables $u = v^2$, and comparing to (6.1.3) of \cite{Goldfeld}, we see that
\begin{equation}
 W_J^{S}(y, (\nu_1, \nu_2), \psi_{1,1}) = c W_J^*(y, (\nu_2, \nu_1), \psi_{1,1}),
\end{equation}
where $c=4$, and $W_J^*$ is defined on p.154 of \cite{Goldfeld} as
\begin{equation}
\label{eq:WJ*}
 W_J^*(z, \nu, \psi_{1,1}) = \pi^{\half - 3 \nu_1 - 3\nu_2} \Gamma \big(\frac{3\nu_1}{2}\big) \Gamma \big(\frac{3\nu_2}{2}\big) \Gamma \big(\frac{3\nu_1 + 3\nu_2 -1}{2}\big) W_J(z, \nu, \psi_{1,1}).
\end{equation}
Notice that $3 \nu_1 = 1-i(\beta-\gamma)$, $3\nu_2 = 1 - i(\alpha - \beta)$,  $3\nu_1 + 3\nu_2 - 1 = 1-i(\alpha-\gamma)$.

It is clear from \eqref{eq:WJS} that $\overline{W_J^S}(y, \nu, \psi_{1,1}) = W_J^S(y, \overline{\nu}, \psi_{1,1})$.
Thus we obtain
\begin{equation}
 G_{\nu, \nu}(s) = \pi^{-3s}
\frac{\prod_{j=1}^{3} \prod_{j'=1}^{3} \Gamma\big(\frac{s + i\alpha_j - i \overline{\alpha_j'}}{2} \big)}
{16 \pi^{1-3\nu_1 -3\overline{\nu_1} - 3 \nu_2 - 3 \overline{\nu_2}} |\Gamma(\frac{3 \nu_1}{2})|^2  |\Gamma(\frac{3 \nu_2}{2}) |^2 |\Gamma(\frac{3 \nu_1 + 3\nu_2 -1}{2})|^2  \Gamma(\frac{3s}{2})}.
\end{equation}
Furthermore, by Proposition 7.4.4 of \cite{Goldfeld}, we have $E^*(z,s) = \pi^{-3s/2} \Gamma(\frac{3s}{2}) \zeta(3s) E(z,s)$ has a simple pole at $s=1$ with residue $2/3$.  Taking the residue at $s=1$ of both sides of \eqref{eq:RSformulaGL3}, and using the fact that $\alpha, \beta, \gamma$ are real, we obtain for some nonzero absolute constant $c$
\begin{equation}
 \langle F, F \rangle = c  |A_F(1,1)|^2 \frac{|\Gamma(\frac{1+i(\alpha-\beta)}{2})|^2 |\Gamma(\frac{1+i(\alpha-\gamma)}{2})|^2 |\Gamma(\frac{1+i(\beta-\gamma)}{2})|^2}
{|\Gamma(\frac{3 \nu_1}{2})|^2 |\Gamma(\frac{3 \nu_2}{2})|^2 |\Gamma(\frac{3 \nu_1 + 3 \nu_2 -1}{2})|^2} \text{Res}_{s=1} L(F \times \overline{F}, s).
\end{equation}
Notice that the ratio of gamma factors above precisely cancel!  This completes the proof.
\end{proof}

\begin{mylemma}[\cite{Molteni}]
\label{lemma:Molteni}
 Let $F$ be a Hecke-Maass cusp form for $\SL{3}{Z}$ 
,let $A_F(m,n)$ be its $(m,n)$-th coefficient as in \eqref{eq:Fourierexpansion}, and suppose that the $L$-function associated to $F$ has analytic conductor $Q(F)$ defined by
\begin{equation}
\label{eq:Fconductor}
 Q(F) = (1+ |\alpha|)(1+ |\beta|)(1+|\gamma|).
\end{equation}
Then for any $\varepsilon > 0$ we have
\begin{equation}
\label{eq:RankinSelbergsumA}
 \sum_{m n \leq x} |A_F(m,n)|^2 \ll_{\varepsilon} |A_F(1,1)|^2 x^{1 + \varepsilon} Q(F)^{\varepsilon}.
\end{equation}
The implied constant is independent of $F$.
\end{mylemma}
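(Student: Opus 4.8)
The plan is to pass from the Fourier coefficients $A_F(m,n)=A_F(1,1)\lambda_F(m,n)$ — so that it suffices to prove $\sum_{mn\le x}|\lambda_F(m,n)|^2\ll_\varepsilon x^{1+\varepsilon}Q(F)^\varepsilon$ — to the Rankin--Selberg $L$-function $L(s,F\times\overline F)$, and then to combine the positivity of its Dirichlet coefficients with an upper bound for it just to the right of $s=1$. It is worth noting why a cruder argument fails: the coefficients of $L(s,F\times\overline F)=\zeta(3s)\sum_{m,n}|\lambda_F(m,n)|^2(m^2n)^{-s}$ control $\sum_{m^2n\le y}|\lambda_F(m,n)|^2$, and the inclusion $\{mn\le x\}\subset\{m^2n\le x^2\}$ only yields $\sum_{mn\le x}|\lambda_F(m,n)|^2\ll x^{2+\varepsilon}Q(F)^\varepsilon$, off by a full power of $x$. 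The loss comes entirely from the range where $m$ is large and $n$ small, and that range is exactly the one handled well by the Hecke relations together with the dual form.

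Concretely, the $GL_3$ Hecke relations (\cite{Goldfeld}, p.~271) give $\lambda_F(m,n)=\sum_{d\mid(m,n)}\mu(d)\lambda_F(m/d,1)\lambda_F(1,n/d)$. Applying Cauchy--Schwarz to $|\lambda_F(m,n)|^2$, substituting $m=dm_1$, $n=dn_1$, estimating the resulting divisor factors by $\tau(\cdot)\ll_\varepsilon(\cdot)^\varepsilon$, resolving the hyperbola $\sum_{m_1n_1\le x/d^2}$ by partial summation in each of the two variables, and summing the convergent series $\sum_d\tau(d)d^{-2-2\varepsilon}$, one reduces the claim to the two one-variable estimates
\[
\sum_{k\le y}|\lambda_F(1,k)|^2 \ll_\varepsilon y^{1+\varepsilon}Q(F)^\varepsilon, \qquad \sum_{k\le y}|\lambda_F(k,1)|^2 \ll_\varepsilon y^{1+\varepsilon}Q(F)^\varepsilon .
\]
The second is the first applied to the dual form $\widetilde F$, which has $(m,n)$-th normalized coefficient $\lambda_F(n,m)$ and Langlands parameters $(-\gamma,-\beta,-\alpha)$, hence conductor $Q(\widetilde F)=Q(F)$.

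For the one-variable bound, write $L(s,F\times\overline F)=\zeta(3s)\sum_{m,n\ge1}|\lambda_F(m,n)|^2(m^2n)^{-s}=:\sum_{k\ge1}c_k k^{-s}$. Its Dirichlet coefficients satisfy $c_k\ge0$, and keeping only the summand with $m=1$ and trivial cube gives $c_k\ge|\lambda_F(1,k)|^2$. Therefore, using positivity alone,
\[
\sum_{k\le y}|\lambda_F(1,k)|^2 \le \sum_{k\le y}c_k \le y^{1+\varepsilon}\sum_{k\ge1}\frac{c_k}{k^{1+\varepsilon}} = y^{1+\varepsilon}L(1+\varepsilon,F\times\overline F),
\]
and it remains only to show $L(1+\varepsilon,F\times\overline F)\ll_\varepsilon Q(F)^\varepsilon$ with implied constant independent of $F$. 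This is the one non-elementary ingredient, and it is precisely what \cite{Molteni} supplies (equivalently: \cite{Molteni} bounds the residue of $L(s,F\times\overline F)$ at $s=1$ by $Q(F)^\varepsilon$, while $L(\sigma,F\times\overline F)\ll1$ for a fixed $\sigma$ large enough follows from the absolutely convergent Euler product and the Jacquet--Shalika bound on the local parameters; interpolating between $\sigma=2$, say, and the pole at $s=1$ covers $s=1+\varepsilon$).

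The only step that is not routine is this last estimate: an upper bound, uniform — indeed $Q^\varepsilon$ — in the analytic conductor, for the $GL_3\times GL_3$ Rankin--Selberg $L$-function near its pole. This is the ``easy'' half of the circle of questions surrounding a possible Landau--Siegel zero of $L(s,F\times\overline F)$ (cf.\ Proposition~\ref{prop:Landau}), and it is exactly the content of \cite{Molteni}; granting it, everything else is bookkeeping with the Hecke relations and partial summation. Multiplying the resulting bound by $|A_F(1,1)|^2$ via $A_F(m,n)=A_F(1,1)\lambda_F(m,n)$ then yields \eqref{eq:RankinSelbergsumA}.
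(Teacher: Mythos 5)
Your proposal is correct and follows essentially the same route as the paper's own proof: reduce via the $GL_3$ Hecke relation $A_F(m,n)=\sum_{d\mid(m,n)}\mu(d)A_F(m/d,1)A_F(1,n/d)$ and Cauchy's inequality to single-variable sums $\sum_{k\le y}|A_F(1,k)|^2$ (and its mirror, handled by the dual form / Hermitian symmetry of the coefficients), then invoke Molteni's bound. Your extra step of recovering the one-variable estimate from $L(1+\varepsilon,F\times\overline F)\ll Q(F)^{\varepsilon}$ by positivity is just an equivalent reformulation of the cited input, not a different argument, so no further comment is needed.
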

This is actually a variation on Molteni's result \cite{Molteni}.  He proved such a bound but with the condition $m^2 n \leq x$ on the left hand side of \eqref{eq:RankinSelbergsumA}.  
\begin{proof}
 Suppose without loss of generality that $A_F(1,1) = 1$.  In the left hand side of \eqref{eq:RankinSelbergsumA}, use the Hecke relation $A_F(m,n) = \sum_{d | (m,n)} A_F(m/d, 1) A_F(1,n/d)$, apply Cauchy's inequality to the sum over $d$, and reverse the orders of summation.  Then apply Molteni's bound to the inner sum over $m$, say, followed by a second application to the sum over $n$.
\end{proof}

In this paper we work extensively with the Rankin-Selberg $L$-functions $L(s, F \times \overline{u_j})$.  The necessary Archimedean integral for this case is given by the following
\begin{myprop}[\cite{Bump}, \cite{StadeDuke}]
\label{prop:Gj}
 Let
\begin{equation}
\label{eq:Gtaudef}
 G_\tau(s) = 4 \int_0^{\infty} \int_0^{\infty} K_{i \tau}(2 \pi y_2) W_J\left(\begin{pmatrix} y_1 y_2 & &  \\ & y_1 &  \\  & & 1 \end{pmatrix}, (\nu_1, \nu_2), \psi_{1,1} \right) (y_1^2 y_2)^{s - \half} y_2^{\half} \frac{dy_2}{y_2^2} \frac{dy_1}{y_1}.
\end{equation}
Then
\begin{equation}
\label{eq:Gtau}
 G_\tau(s) = 
\frac{\pi^{-3s} \Gamma\left(\frac{s-i\tau -i\alpha}{2} \right)\Gamma\left(\frac{s-i\tau -i\beta}{2} \right)\Gamma\left(\frac{s-i\tau -i\gamma}{2} \right)\Gamma\left(\frac{s+i\tau -i\alpha}{2} \right)\Gamma\left(\frac{s+i\tau -i\beta}{2} \right)\Gamma\left(\frac{s+i\tau -i\gamma}{2} \right)}
{\pi^{-\frac32 + i\alpha-i\gamma} \Gamma\left(\frac{1 + i\gamma - i\beta}{2} \right)\Gamma\left(\frac{1 + i\beta-i\alpha}{2} \right)\Gamma\left(\frac{1 + i\gamma-i\alpha}{2} \right)}.
\end{equation}
\end{myprop}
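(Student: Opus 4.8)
The plan is to compute the double Mellin--Barnes type integral \eqref{eq:Gtaudef} by unfolding the Jacquet--Whittaker function into its Bessel integral representation, integrating out the $y_1$ and $y_2$ variables against $K$-Bessel kernels, and then recognizing the result as the claimed ratio of Gamma factors. The cleanest route is to leverage work already available: the integral $G_\tau(s)$ is, up to normalization, precisely the Archimedean factor of the Rankin--Selberg $L$-function $L(s, F\times\overline{u_j})$, which was computed classically by Bump \cite{Bump} and Stade \cite{StadeDuke}. So my first step would be to locate the relevant formula in \cite{StadeDuke} (Stade's ``Mellin transforms of $GL_3$ Whittaker functions'' or the Barnes-type integral evaluations therein) and track the normalization constants, exactly as was done for the $GL_3\times GL_3$ case in Proposition \ref{prop:Landau} using \cite{StadeGL3GL3}. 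The appearance of $W_J$ (Goldfeld's normalization) rather than $W_J^S$ or $W_J^*$ means I will need the conversion factors \eqref{eq:WJS}--\eqref{eq:WJ*}, which account for the denominator Gamma factors $\Gamma\left(\frac{1+i\gamma-i\beta}{2}\right)\Gamma\left(\frac{1+i\beta-i\alpha}{2}\right)\Gamma\left(\frac{1+i\gamma-i\alpha}{2}\right)$ and the power of $\pi$ in the denominator of \eqref{eq:Gtau}.

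If one prefers a self-contained derivation, the key steps are as follows. First, insert the integral representation of $W_J$ (the analog of \eqref{eq:WJS}, i.e. $W_J\big(\operatorname{diag}(y_1y_2,y_1,1),(\nu_1,\nu_2),\psi_{1,1}\big)$ as a constant times $y_1^{1+i\beta/2}y_2^{1-i\beta/2}\int_0^\infty K_\mu(2\pi y_1\sqrt{1+u})K_\mu(2\pi y_2\sqrt{1+u^{-1}})u^{3i\beta/4}\frac{du}{u}$ with $\mu=\tfrac12(i\gamma-i\alpha)$) into \eqref{eq:Gtaudef}. This produces a triple integral over $y_1,y_2,u$ with the additional factor $K_{i\tau}(2\pi y_2)$. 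Second, perform the $y_1$-integral: $\int_0^\infty K_\mu(2\pi y_1\sqrt{1+u})\, y_1^{2s-1+i\beta/2}\,\frac{dy_1}{y_1}$ is a standard Mellin transform of a single $K$-Bessel function, evaluating to a product of two Gamma factors times $(1+u)^{-(\cdots)}$; this is where the factors $\Gamma\left(\frac{s\pm i\tau-i\alpha}{2}\right)$ type terms begin to emerge once one also handles $y_2$. Third, perform the $y_2$-integral: here one needs the Mellin transform of a \emph{product} of two $K$-Bessel functions $K_{i\tau}(2\pi y_2)K_\mu(2\pi y_2\sqrt{1+u^{-1}})$ against $y_2^{s-1+\cdots}$, which is a classical Barnes-type formula (expressible via a ${}_2F_1$ or, after integrating in $u$, via a ratio of Gamma functions). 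Fourth, the remaining $u$-integral is a Beta-type integral $\int_0^\infty u^{a}(1+u)^{-b}\frac{du}{u}=B(a,b-a)$, producing the final batch of Gamma factors; combining everything and carefully collecting the normalization constants gives \eqref{eq:Gtau}.

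The main obstacle is the $y_2$-integral against the product of two $K$-Bessel functions of different orders ($i\tau$ and $\mu$) and different arguments — this is genuinely the $GL_3\times GL_2$ local integral and does not reduce to elementary Gamma-function identities without invoking a hypergeometric evaluation (Barnes' second lemma or the Gustafson/Stade beta-integral evaluations). Getting the $u$-integral to collapse the ${}_2F_1$ into a clean ratio of six Gamma functions in the numerator is the delicate point; this is exactly the content of the computations in \cite{Bump} and \cite{StadeDuke}, and it is why I would cite those references rather than reprove the identity. A secondary but real difficulty is bookkeeping: matching Goldfeld's Whittaker normalization to Stade's (note the swap of $y_1,y_2$ and the identifications $a_1\leftrightarrow i\gamma$, $a_2\leftrightarrow i\beta$ already flagged in the $GL_3\times GL_3$ case), and verifying that the factor of $4$ in \eqref{eq:Gtaudef} and the factor $2\sqrt{|y|}$ built into $W_{1/2+i\tau}$ in \eqref{eq:ujFourier} are correctly absorbed so that the denominator in \eqref{eq:Gtau} comes out exactly as stated.
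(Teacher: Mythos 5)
Your primary route is exactly the paper's: the proof simply quotes Stade's explicit $GL_3\times GL_2$ Archimedean integral from \cite{StadeDuke} (identifying Stade's $W_{(3,\nu)}$ with $W_J^*$ as in \eqref{eq:WJ*}) and then converts $W_J^*$ to $W_J$ via \eqref{eq:WJ*}, which is precisely where the denominator factors $\Gamma\left(\frac{1+i\gamma-i\beta}{2}\right)\Gamma\left(\frac{1+i\beta-i\alpha}{2}\right)\Gamma\left(\frac{1+i\gamma-i\alpha}{2}\right)$ and the power of $\pi$ arise, as you predicted. The self-contained Bessel/Barnes computation you sketch is not carried out in the paper either, so your proposal is correct and takes essentially the same approach.
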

\begin{proof}
Bump \cite{Bump} proved a formula like this but without an explicit constant in front.  We shall refer to \cite{StadeDuke}.  We first remark how to translate notation.  By comparing the equation at the top of page 318 of \cite{StadeDuke} with (6.1.3) of \cite{Goldfeld}, we see that Stade's $W_{(3, \nu)}(y_2, y_1)$ is the same as $W_J^*(y, \nu, \psi_{1,1})$, where $W_J^*$ is defined by \eqref{eq:WJ*}.  Then Stade shows (see (7.8) and the equation on p.358 of \cite{StadeDuke}, though note there is a misprint in the parameter of the Bessel function) that
\begin{multline}
 \int_0^{\infty} \int_0^{\infty} W_J^*(y, \nu, \psi_{1,1}) K_{i \tau}(2 \pi y_2) (y_1^2 y_2)^s \frac{dy_1 dy_2}{y_1^2 y_2^2} 
\\
= 4^{-1} \pi^{-3s} 
\frac{\Gamma\left(\frac{s-i\tau -i\alpha}{2} \right)\Gamma\left(\frac{s-i\tau -i\beta}{2} \right)\Gamma\left(\frac{s-i\tau -i\gamma}{2} \right)\Gamma\left(\frac{s+i\tau -i\alpha}{2} \right)\Gamma\left(\frac{s+i\tau -i\beta}{2} \right)\Gamma\left(\frac{s+i\tau -i\gamma}{2} \right)}
{\pi^{-\frac32 + i\alpha-i\gamma} \Gamma\left(\frac{1 + i\gamma - i\beta}{2} \right)\Gamma\left(\frac{1 + i\beta-i\alpha}{2} \right)\Gamma\left(\frac{1 + i\gamma-i\alpha}{2} \right)}.
\end{multline}
Then using \eqref{eq:WJ*} we convert this into \eqref{eq:Gtau}, as desired.
\end{proof}

\begin{myprop}
\label{prop:GL3GL2RS} 
 Suppose $F$ is a $\SL{3}{Z}$ Hecke-Maass form as in \eqref{eq:Fourierexpansion}, and $u_j$ is a $\SL{2}{Z}$ Hecke-Maass form.  Define 
\begin{equation}
\label{eq:Lambdaintegralrep}
 \mathcal{L}(s, F \times \overline{u_j}) = \int_0^{\infty} \int_{\SL{2}{Z} \backslash \mathcal{H}^2} \overline{u_j}(z_2) F\begin{pmatrix} z_2 y_1 & \\ & 1 \end{pmatrix} y_1^{2s-1} \frac{dx_2 dy_2}{y_2^2} \frac{dy_1}{y_1},
\end{equation}
where $z_2$ is as in \eqref{eq:z2}.  If $u_j$ is even then
\begin{equation}
\label{eq:LambdaL}
 \mathcal{L}(s, F \times \overline{u_j}) = \overline{\rho_j}(1) L(s, F\times u_j) G_{t_j}(s),
\end{equation}
where $G_{\tau}(s)$ is given in Proposition \ref{prop:Gj} and where
\begin{equation}
\label{eq:GL3GL2RS}
 L(s, F \times u_j) = \sum_{m_1 \geq 1} \sum_{m_2 \geq 1} \frac{A_F(m_1, m_2) \lambda_j(m_2)}{(m_1^2 m_2)^s}.
\end{equation}
If $u_j$ is odd then \eqref{eq:Lambdaintegralrep} vanishes.
\end{myprop}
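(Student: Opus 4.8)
The plan is to run the classical Rankin--Selberg unfolding argument, parallel to the $GL_3 \times GL_2$ Eisenstein computation in Section~7.4 of \cite{Goldfeld} but with the Eisenstein series replaced by the cusp form $u_j$. First I would insert the Fourier--Whittaker expansion \eqref{eq:Fourierexpansion} of $F$ into the integral representation \eqref{eq:Lambdaintegralrep}. For $\text{Re}(s)$ sufficiently large, the rapid decay of $W_J$ in its matrix argument, the Rankin--Selberg bound of Lemma~\ref{lemma:Molteni}, and the boundedness of $u_j$ together justify interchanging the sum over $\gamma \in U_2(\mz)\backslash\SL{2}{Z}$ with the double integral; since $\overline{u_j}$ and the measure $\frac{dx_2\,dy_2}{y_2^2}$ are $\SL{2}{Z}$-invariant, this sum then unfolds the $z_2$-integral from $\SL{2}{Z}\backslash\mathcal{H}^2$ to the strip $U_2(\mz)\backslash\mathcal{H}^2 = \{0\le x_2<1,\ y_2>0\}$. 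The identity for general $s$ follows by analytic continuation.

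The next step is to extract the dependence on the variable $x_2$. Writing $z_2 y_1$ in Iwasawa coordinates and conjugating the resulting unipotent part past $M = \text{diag}(m_1|m_2|,m_1,1)$, the transformation law of the Jacquet--Whittaker function gives
\[
 W_J\!\left(M\begin{pmatrix} z_2 y_1 & \\ & 1\end{pmatrix}, \nu, \psi_{1,\,m_2/|m_2|}\right)
 = e(m_2 x_2)\, W_J\!\left(\begin{pmatrix} m_1|m_2|\,y_1 y_2^{1/2} & & \\ & m_1\, y_1 y_2^{-1/2} & \\ & & 1\end{pmatrix}, \nu, \psi_{1,\,m_2/|m_2|}\right).
\]
Inserting the Fourier expansion \eqref{eq:ujFourier} of $u_j$ (whose $n$-th term contributes $\overline{\rho_j(n)}\,2\sqrt{|n|y_2}\,K_{it_j}(2\pi|n|y_2)\,e(-nx_2)$, using $\overline{K_{it_j}} = K_{it_j}$ for real $t_j$) and integrating over $x_2 \in [0,1)$, the orthogonality of additive characters collapses the $\SL{3}{Z}$ index $m_2$ onto the $\SL{2}{Z}$ Fourier index $n$.

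What remains is the double integral over $(y_1,y_2)$. After the substitution $Y_2 = |n|y_2$, $Y_1 = m_1 y_1 y_2^{-1/2}$, this is precisely the integral $G_{t_j}(s)$ of Proposition~\ref{prop:Gj}; together with the normalizing factors from \eqref{eq:Fourierexpansion} and \eqref{eq:ujFourier} it assembles the weight $(m_1^2|n|)^{-s}$, hence the Dirichlet series $L(s,F\times u_j)$ of \eqref{eq:GL3GL2RS}. Here one uses that $W_J$ of a positive diagonal matrix is unchanged when $\psi_{1,1}$ is replaced by $\psi_{1,-1}$ --- because $W_J(z,\nu,\psi_{1,-1}) = W_J(\text{diag}(-1,1,1)z,\nu,\psi_{1,1})$ and $\text{diag}(-1,1,1)$ commutes with a positive diagonal matrix up to a factor in $O_3(\mr)$, under which $W_J$ is right-invariant --- so that the $n>0$ and $n<0$ contributions carry identical Archimedean and $y$-integral factors. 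Combining them, and using the symmetry $A_F(m_1,-m_2) = A_F(m_1,m_2)$ of the Fourier coefficients together with $\rho_j(-1) = \pm\rho_j(1)$ from \eqref{eq:rholambda}, the $n>0$ and $n<0$ parts add when $u_j$ is even, yielding \eqref{eq:LambdaL}, and cancel when $u_j$ is odd, giving $0$ (consistent also with the oddness of $u_j$ under $x_2 \mapsto -x_2$).

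I expect the main obstacle to be bookkeeping rather than analysis: correctly tracking the normalizations and sign conventions of the $\SL{3}{Z}$ Whittaker functions under $\psi_{1,\pm1}$, handling the $\pm I$ ambiguity implicit in $U_2(\mz)\backslash\SL{2}{Z}$ in the unfolding, and threading Stade's normalization through Proposition~\ref{prop:Gj}, so that all the numerical constants cancel exactly and \eqref{eq:LambdaL} emerges with the stated coefficient $\overline{\rho_j}(1)$ and no spurious factors. The analytic input --- absolute convergence for $\text{Re}(s)$ large and the legitimacy of the interchanges --- is routine given the decay of $W_J$ and Lemma~\ref{lemma:Molteni}.
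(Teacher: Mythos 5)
Your proposal is correct and follows essentially the same route as the paper's own proof: unfold via \eqref{eq:Fourierexpansion}, extract $e(m_2x_2)$ from the Whittaker transformation law, insert \eqref{eq:ujFourier} and use orthogonality in $x_2$, rescale $(y_1,y_2)$ to produce $(m_1^2|m_2|)^{-s}$ times the integral $G_{t_j}(s)$ of \eqref{eq:Gtaudef}, and combine the $\pm m_2$ terms using $A_F(m_1,-m_2)=A_F(m_1,m_2)$ and the parity of $u_j$. Your explicit verification that $W_J$ of a positive diagonal argument is unchanged when $\psi_{1,1}$ is replaced by $\psi_{1,-1}$ is a correct elaboration of a point the paper passes over with the remark that there are no odd $\SL{3}{Z}$ Maass forms.
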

Remarks.  \begin{itemize}
           \item In \eqref{eq:GL3GL2RS} we break with our convention of defining $L$-functions only for multiplicative Dirichlet series.  We do so because it simplifies our forthcoming formulas.
		\item In \cite{GT} the $y_1$-integral is called the rank-lowering operator whose analytic properties are studied.
		\item The fact that $\mathcal{L}(s, F \times \overline{u_j})$ vanishes for $u_j$ odd indicates that \eqref{eq:Lambdaintegralrep} is not the desired integral representation for this $L$-function, but nevertheless we continue to use this notation.
          \end{itemize}
\begin{proof}
Inserting the Fourier expansion for $F$, \eqref{eq:Fourierexpansion}, and unfolding the integral, we obtain
\begin{equation}
 \mathcal{L}(s, F \times \overline{u_j}) = \int_0^{\infty} \int_0^{1} \int_0^{\infty} \overline{u_j}(z_2) \sum_{m_1 \geq 1} \sum_{m_2 \neq 0} \frac{A_F(m_1, m_2)}{m_1 |m_2|}  W_J\Big(M \begin{pmatrix} z_2 y_1 & \\ & 1 \end{pmatrix}\Big) y_1^{2s-1} \frac{dx_2 dy_2}{y_2^2} \frac{dy_1}{y_1}.
\end{equation}
A short matrix computation and the use of a characteristic property of the Whittaker function (Definition 5.4.1 (1) of \cite{Goldfeld}) shows 
\begin{equation}
W_J\left(M \begin{pmatrix} z_2 y_1 & \\ & 1 \end{pmatrix}\right) = e(m_2 x_2) W_J\left(\begin{pmatrix} m_1 |m_2|  &  & \\ & m_1 & \\ & & 1 \end{pmatrix}\begin{pmatrix} y_1 y_2 &  & \\ & y_1 & \\ & & 1 \end{pmatrix}  \right).
\end{equation}
Using this, inserting
the Fourier expansion for $u_j$ \eqref{eq:ujFourier} and evaluating the $x_2$-integral by orthogonality of characters, we have 
\begin{multline}
 \mathcal{L}(s, F \times \overline{u_j}) =  2  \sum_{m_1 \geq 1} \sum_{m_2 \neq 0} \frac{A_F(m_1, m_2) \overline{\rho_j}(m_2)}{m_1 |m_2|^{\half}} 
\int_0^{\infty} \int_0^{\infty}
K_{it_j}(2 \pi |m_2| y_2)
\\
 W_J\left(\begin{pmatrix} m_1 |m_2|  &  & \\ & m_1 & \\ & & 1 \end{pmatrix}\begin{pmatrix} y_1 y_2 &  & \\ & y_1 & \\ & & 1 \end{pmatrix}  \right)   (y_1^2 y_2)^{s-\half}  y_2^{\half} \frac{d y_1 dy_2}{y_1 y_2^2}.
\end{multline}
Note that $\overline{K_{i\tau}}(x) = K_{i\tau}(x)$ for $x > 0$ and $\tau$ real.
Changing variables $y_2 \rightarrow y_2/|m_2|$ and $y_1 \rightarrow y_1/m_1$ and using \eqref{eq:rholambda} gives
\begin{multline}
 \mathcal{L}(s, F \times \overline{u_j}) =  2  \sum_{m_1 \geq 1} \sum_{m_2 \neq 0} \frac{A_F(m_1, m_2) \lambda_j(|m_2|) \overline{\rho_j}(\frac{m_2}{|m_2|})}{(m_1^2 |m_2|)^s} 
\\
\int_0^{\infty} \int_0^{\infty}
K_{it_j}(2 \pi y_2)
 W_J\begin{pmatrix} y_1 y_2 &  & \\ & y_1 & \\ & & 1 \end{pmatrix}  (y_1^2 y_2)^{s-\half}  y_2^{\half} \frac{d y_1 dy_2}{y_1 y_2^2}.
\end{multline}
Recall that we say that $u_j$ is even if $\rho_j(-1) = \rho_j(1)$, and that $u_j$ is odd if $\rho_j(-1) = - \rho_j(1)$.  There are no odd Maass forms for $\SL{3}{Z}$, meaning $A_F(m_1, m_2) = A_F(m_1, -m_2)$ (see \cite{Goldfeld} p.163).  This implies $\mathcal{L}(s, F \times \overline{u_j}) = 0$ if $u_j$ is odd.  For $u_j$ even we simply recall the definition \eqref{eq:Gtaudef} to complete the proof.
\end{proof}
\begin{myprop}
  Suppose $F$ is a $\SL{3}{Z}$ Hecke-Maass form as in \eqref{eq:Fourierexpansion}.  Define
\begin{equation}
\label{eq:LambdaintegralrepEisenstein}
 \mathcal{L}(s, F \times \overline{E}(\cdot, \thalf + i\tau)) = \int_0^{\infty} \int_{\SL{2}{Z} \backslash \mathcal{H}^2} \overline{E}(z_2, \thalf + i\tau) F\begin{pmatrix} z_2 y_1 & \\ & 1 \end{pmatrix} y_1^{2s-1} \frac{dx_2 dy_2}{y_2^2} \frac{dy_1}{y_1}.
\end{equation}
Then
\begin{equation}
\label{eq:LambdaLE}
 \mathcal{L}(s, F \times \overline{E}(\cdot, \thalf + i\tau)) =  \overline{\rho}(1, \thalf + i \tau) L(s, F \times E(\cdot, \thalf + i \tau)) G_{\tau}(s),
\end{equation}
where
\begin{equation}
\label{eq:LFEdef}
 L(s, F \times E(\cdot, \thalf + i \tau)) = \sum_{m_1 \geq 1} \sum_{m_2 \geq 1} \frac{A_F(m_1, m_2) \lambda(m_2, \half + i\tau)}{(m_1^2 m_2)^s}.
\end{equation}
\end{myprop}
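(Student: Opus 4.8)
The plan is to follow the proof of Proposition~\ref{prop:GL3GL2RS} essentially verbatim, replacing the Fourier--Whittaker expansion of the cusp form $u_j$ by that of the Eisenstein series given in \eqref{eq:EisensteinFourier}. First I would substitute the expansion \eqref{eq:Fourierexpansion} for $F$ into \eqref{eq:LambdaintegralrepEisenstein} and unfold: since $E(z_2,\thalf+i\tau)$ is $\SL{2}{Z}$-invariant in $z_2$, the sum over $\gamma\in U_2(\mz)\backslash\SL{2}{Z}$ collapses the domain $\SL{2}{Z}\backslash\mathcal{H}^2$ to $x_2\in[0,1)$, $y_2\in(0,\infty)$, just as before, and the same short matrix computation together with the characteristic property of $W_J$ (Definition~5.4.1(1) of \cite{Goldfeld}) produces the factor $e(m_2 x_2)\,W_J\big(\mathrm{diag}(m_1|m_2|,m_1,1)\,\mathrm{diag}(y_1y_2,y_1,1)\big)$. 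All interchanges are legitimate for $\text{Re}(s)$ sufficiently large, where everything converges absolutely exactly as in Proposition~\ref{prop:GL3GL2RS}, the rapid decay of the cuspidal $F$ along the restriction dominating the moderate growth of $E$.

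Next I would insert the Fourier expansion of $\overline{E}(z_2,\thalf+i\tau)=E(z_2,\thalf-i\tau)$ from \eqref{eq:EisensteinFourier} and carry out the $x_2$-integral by orthogonality of characters. The one new feature is the constant term $y_2^{\thalf-i\tau}+\rho(\thalf-i\tau)\,y_2^{\thalf+i\tau}$: it has no $x_2$-dependence, so it is annihilated by the integral against $e(m_2 x_2)$ since $m_2\neq 0$, and it drops out completely. Among the remaining terms $\rho(n,\thalf-i\tau)\,W_{\thalf-i\tau}(nz_2)$ only $n=m_2$ survives, leaving $\rho(m_2,\thalf-i\tau)\cdot 2\sqrt{|m_2|y_2}\,K_{i\tau}(2\pi|m_2|y_2)$ after using $K_{-i\tau}=K_{i\tau}$. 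There is no even/odd case to track here: $\rho(n,s)$ depends only on $|n|$, the Eisenstein series is automatically ``even'', and $\lambda(n,s)$ and the coefficients $A_F(m_1,m_2)=A_F(m_1,-m_2)$ are all symmetric under $m_2\mapsto -m_2$.

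The rest is a copy of Proposition~\ref{prop:GL3GL2RS}: I would change variables $y_2\mapsto y_2/|m_2|$ and $y_1\mapsto y_1/m_1$ so that the $(y_1,y_2)$-integral becomes independent of $m_1,m_2$ and equals $\tfrac14 G_\tau(s)$ by \eqref{eq:Gtaudef}, and then reassemble the arithmetic sum using the Eisenstein analogue of \eqref{eq:rholambda}, namely $\rho(n,\thalf-i\tau)=\overline{\rho}(1,\thalf+i\tau)\,\lambda(n,\thalf+i\tau)\,|n|^{-\thalf}$; this follows at once from $\rho(n,s)=\pi^s\lambda(n,s)/(\Gamma(s)\zeta(2s)|n|^{\thalf})$ together with $\lambda(n,\thalf-i\tau)=\lambda(n,\thalf+i\tau)$, $\overline{\Gamma(\thalf+i\tau)}=\Gamma(\thalf-i\tau)$ and $\overline{\zeta(1+2i\tau)}=\zeta(1-2i\tau)$. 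Finally $\sum_{m_2\neq 0}=2\sum_{m_2\geq 1}$ by symmetry, and the two factors of $2$ (this one and the $2$ appearing in $W_{\thalf-i\tau}$) combine with the $\tfrac14 G_\tau(s)$ to produce precisely $\overline{\rho}(1,\thalf+i\tau)\,L(s,F\times E(\cdot,\thalf+i\tau))\,G_\tau(s)$, with $L(s,F\times E(\cdot,\thalf+i\tau))$ as in \eqref{eq:LFEdef}.

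I do not expect any real obstacle, since the computation is structurally identical to that of Proposition~\ref{prop:GL3GL2RS}. The only two points that deserve to be stated explicitly are (i) that the Eisenstein constant term genuinely vanishes in the $x_2$-integral (which is immediate from $m_2\neq 0$), and (ii) the bookkeeping of the complex conjugation, i.e.\ the identity for $\rho(n,\thalf-i\tau)$ recorded above, which puts $\overline{\rho}(1,\thalf+i\tau)$ rather than $\rho(1,\thalf-i\tau)$ in front; both are routine.
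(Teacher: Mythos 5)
Your proposal is correct and is exactly the argument the paper has in mind: the paper omits this proof as ``very similar to that of Proposition \ref{prop:GL3GL2RS},'' and you carry out precisely that adaptation, with the two points needing care (the constant term of the Eisenstein series dying against $e(m_2x_2)$ for $m_2\neq 0$, and the conjugation identity $\rho(n,\thalf-i\tau)=\overline{\rho}(1,\thalf+i\tau)\lambda(n,\thalf+i\tau)|n|^{-\thalf}$) handled correctly, and the constants $2\cdot 2\cdot\tfrac14$ bookkept as in \eqref{eq:Gtaudef}.
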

The proof is very similar to that of Proposition \ref{prop:GL3GL2RS} so we omit it.
\begin{mycoro}[\cite{JPS1}, \cite{JPS2}, \cite{CP}]
 Let $F$ be as above, and let $\widetilde{F}$ be its dual.  Then $L(s, F \times u_j)$ defined by \eqref{eq:GL3GL2RS} and $L(s, F \times E(\cdot, \half + i\tau)$ defined by \eqref{eq:LFEdef} have analytic continuation to the entire complex plane and satisfy the respective functional equations
\begin{equation}
\label{eq:FujFE}
 \mathcal{L}(s, F \times \overline{u_j}) = \mathcal{L}(1-s, \widetilde{F} \times \overline{u_j}), 
\end{equation}
and
\begin{equation}
\label{eq:FEFE}
 \mathcal{L}(s, F \times \overline{E}(\cdot, \thalf + i \tau)) = \mathcal{L}(1-s, \widetilde{F} \times \overline{E}(\cdot, \thalf + i \tau)).
\end{equation}
\end{mycoro}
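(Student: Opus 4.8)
The plan is to deduce both the continuation and the functional equations from the standard theory of $GL_3\times GL_2$ Rankin--Selberg $L$-functions of Jacquet--Piatetski-Shapiro--Shalika \cite{JPS1}, \cite{JPS2} (see also \cite{CP}), together with the explicit archimedean computation already recorded in Proposition \ref{prop:Gj}. First I would note that the integral \eqref{eq:Lambdaintegralrep} converges absolutely for $\mathrm{Re}(s)$ large, and that, after the determinant is extracted from the $GL_2$-integration, it is exactly the classical form of the Jacquet--Piatetski-Shapiro--Shalika integral attached to $F$ and $u_j$; by Proposition \ref{prop:GL3GL2RS} it equals $\overline{\rho_j}(1)\,G_{t_j}(s)\,L(s,F\times u_j)$ for such $s$.

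Next I would match normalizations. Since $\alpha+\beta+\gamma=0$ by \eqref{eq:alpha}--\eqref{eq:gamma}, the six $\Gamma$-factors and the $\pi^{-3s}$ in the numerator of $G_{t_j}(s)$ from Proposition \ref{prop:Gj} constitute precisely the archimedean Euler factor $L_\infty(s,F\times u_j)=\prod_{\pm}\prod_{x\in\{\alpha,\beta,\gamma\}}\Gamma_{\mr}(s\pm it_j-ix)$, while the denominator is a nonzero constant $C_F$ depending only on $\alpha,\beta,\gamma$. Thus for $\mathrm{Re}(s)$ large one has $\mathcal{L}(s,F\times\overline{u_j})=C_F^{-1}\,\overline{\rho_j}(1)\,\Lambda(s,F\times u_j)$, where $\Lambda=L_\infty\cdot L$ is the completed Rankin--Selberg $L$-function. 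By \cite{JPS1}, \cite{JPS2}, \cite{CP}, $\Lambda(s,F\times u_j)$ has analytic continuation to an entire function of $s$ --- entire because $u_j$ is cuspidal --- and satisfies $\Lambda(s,F\times u_j)=\varepsilon(F\times u_j)\,\Lambda(1-s,\widetilde{F}\times\widetilde{u_j})$. Now $u_j$, a Hecke--Maass form for $\SL{2}{Z}$, has real Hecke eigenvalues, so $\widetilde{u_j}=u_j$; both $F$ and $u_j$ are everywhere unramified and ``even'' (the odd $u_j$ having already been discarded in Proposition \ref{prop:GL3GL2RS}), so $\varepsilon(F\times u_j)=1$; and a one-line inspection of the explicit gamma factors in Proposition \ref{prop:Gj} shows that under $(\alpha,\beta,\gamma)\mapsto(-\gamma,-\beta,-\alpha)$ --- the effect of passing to $\widetilde{F}$ --- the power of $\pi$ and the three $\Gamma$-arguments defining $C_F$ are merely permuted, so $C_{\widetilde{F}}=C_F$. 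Combining these facts gives \eqref{eq:FujFE}, first for $\mathrm{Re}(s)$ large and then for all $s$ by analytic continuation of both sides; and $L(s,F\times u_j)=\Lambda(s,F\times u_j)/L_\infty(s,F\times u_j)$ is entire because $L_\infty$, being a product of $\Gamma$-functions, has no zeros.

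The Eisenstein case is entirely parallel: $E(\cdot,\thalf+i\tau)$ is self-dual (its coefficients $\lambda(n,\thalf+i\tau)$ are real), $L(s,F\times E(\cdot,\thalf+i\tau))$ factors as $L(s+i\tau,F)\,L(s-i\tau,F)$ --- a product of two standard $GL_3$ $L$-functions, each carrying its own functional equation --- and $G_\tau(s)$ again equals $C_F^{-1}$ times the matching archimedean factor, so \eqref{eq:FEFE} follows exactly as above. The only step needing genuine care is the bookkeeping of multiplicative constants, namely verifying that the nonzero scalar relating $\mathcal{L}(s,F\times\overline{u_j})$ to $\Lambda(s,F\times u_j)$ is invariant under $(F,s)\mapsto(\widetilde{F},1-s)$, so that the root number $1$ of the Rankin--Selberg functional equation is transmitted cleanly to the symmetric form \eqref{eq:FujFE}; this is precisely why Proposition \ref{prop:Gj} was recorded with its explicit constant, and granting it the verification is immediate. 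Everything else is a direct appeal to \cite{JPS1}, \cite{JPS2}, \cite{CP}.
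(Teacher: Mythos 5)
Your argument is correct, but it proceeds by a genuinely different route than the paper. The paper's proof is a direct, Riemann-style symmetry argument on the period integral \eqref{eq:Lambdaintegralrep}: following p.~375 of \cite{Goldfeld}, one changes variables $z_2 \rightarrow \leftexp{t}{z_2}^{-1}$, $y_1 \rightarrow y_1^{-1}$, under which the measure is invariant and the integrand for $F$ becomes that for $\widetilde{F}$ at $1-s$, so \eqref{eq:FujFE} and \eqref{eq:FEFE} drop out with no root number, no normalization constants, and no appeal to local factor computations. You instead invoke the completed Rankin--Selberg theory of \cite{JPS1}, \cite{JPS2}, \cite{CP} and reassemble \eqref{eq:FujFE} from Proposition \ref{prop:Gj}: your observation that the numerator of \eqref{eq:Gtau} is exactly $\prod_{\pm}\prod_{x\in\{\alpha,\beta,\gamma\}}\Gamma_{\mr}(s\pm it_j-ix)$ (using $\alpha+\beta+\gamma=0$), that the denominator constant is invariant under $(\alpha,\beta,\gamma)\mapsto(-\gamma,-\beta,-\alpha)$, that $\widetilde{u_j}=u_j$, and that the factorization $L(s,F\times E(\cdot,\thalf+i\tau))=L(s+i\tau,F)L(s-i\tau,F)$ handles \eqref{eq:FEFE}, are all verifiably right. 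The cost of your route is that it needs two inputs the paper's proof gets for free: the identification of the Dirichlet series \eqref{eq:GL3GL2RS}, \eqref{eq:LFEdef} with the finite part of the JPS $L$-function (the unramified local computation), and the fact that the global root number is $+1$, which ultimately rests on the archimedean epsilon factors of unramified characters of $\mr^{\times}$ being trivial --- true for spherical $F$ and even $u_j$, but asserted rather than proved in your write-up, and it is precisely this sign that the change-of-variables proof never has to see. What your route buys in exchange is the explicit identification of $\mathcal{L}(s,F\times\overline{u_j})$ with $\overline{\rho_j}(1)$ times a fixed nonzero constant times the completed Rankin--Selberg $L$-function, which is more information than the symmetric functional equation alone; the paper's approach is shorter, self-contained at the level of the integral representation, and treats the cuspidal and Eisenstein cases identically.
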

An explicit proof of \eqref{eq:FujFE} is given on p.375 of \cite{Goldfeld} (the case of \eqref{eq:FEFE} is similar); it essentially follows the lines of Riemann's proof of the functional equation for the zeta function by changing variables $z_2 \rightarrow \leftexp{t}{z_2}^{-1}$, $y_1 \rightarrow y_1^{-1}$.  The measure is invariant under these changes of variables, which have the effect of replacing $s$ by $1-s$, and replacing $F$ by its dual, noting that $F$ is invariant on the left and right by the Weyl group.

\section{The restriction norm and $L$-functions}
In this section we develop a beautiful formula for the restriction norm $N(F)$ in terms of Rankin-Selberg $L$-functions associated to $F$ convolved with Maass forms and Eisenstein series for $\SL{2}{Z}$.
\begin{mytheo}
\label{thm:normformula}
 Let $F$ be a Hecke-Maass form for $\SL{3}{Z}$, $u_j$ be an orthonormal basis of Hecke-Maass forms for $\SL{2}{Z}$, $E(\cdot, \half + i\tau)$ be the Eisenstein series, and recall the definitions \eqref{eq:LambdaL}, \eqref{eq:LambdaLE}.  Then
\begin{multline}
\label{eq:normformula}
 N(F) = \frac{1}{\pi} \sum_{j \geq 1} \intR |\mathcal{L}(\thalf + it, F \times \overline{u_j})|^2 dt
\\
+ \frac{1}{4 \pi^2} \intR \intR |\mathcal{L}(\thalf + it, F \times \overline{E}(\cdot, \thalf + i\tau)|^2 dt d\tau.
\end{multline}
\end{mytheo}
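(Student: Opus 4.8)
The plan is to treat the inner integral, for each fixed $y_1 > 0$, as the squared $L^2(\SL{2}{Z}\backslash\mathcal{H}^2)$-norm of the restricted function $f_{y_1}(z_2) := F\left(\begin{smallmatrix} z_2 y_1 & \\ & 1\end{smallmatrix}\right)$, to expand $\|f_{y_1}\|^2$ by the $GL_2$ Parseval identity \eqref{eq:Parseval}, to integrate in $y_1$, and then to identify each resulting $y_1$-integral of a squared $GL_2$ inner product as an $L^2$-norm on the line $\text{Re}(s) = \thalf$ of the Rankin--Selberg integral $\mathcal{L}$ via Mellin--Plancherel. The Langlands-parameter/temperedness hypothesis plays no role in this identity; it is needed only for the subsequent estimation.

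First I would note that $f_{y_1}$ is a legitimate element of $L^2(\SL{2}{Z}\backslash\mathcal{H}^2)$: it is invariant under $\SL{2}{Z}$ on the left and $O_2(\mr)$ on the right because $F$ is (this is exactly the structure pointed out in the introduction), it is smooth, and it is bounded since the $\SL{3}{Z}$ cusp form $F$ is bounded; as $\SL{2}{Z}\backslash\mathcal{H}^2$ has finite volume this already gives $f_{y_1}\in L^2$, so \eqref{eq:Parseval} applies. Writing $c_j(y_1) = \langle f_{y_1}, u_j\rangle$ and $d_\tau(y_1) = \langle f_{y_1}, E(\cdot, \thalf + i\tau)\rangle$, Parseval gives
\[
\int_{\SL{2}{Z}\backslash\mathcal{H}^2}\Big| F\begin{pmatrix} z_2 y_1 & \\ & 1\end{pmatrix}\Big|^2 \frac{dx_2\, dy_2}{y_2^2} = \sum_{j\geq 1}|c_j(y_1)|^2 + \frac{1}{4\pi}\intR |d_\tau(y_1)|^2\, d\tau .
\]
The $j=0$ (constant function) term is absent: inserting the Fourier expansion \eqref{eq:Fourierexpansion} and unfolding exactly as in the proof of Proposition \ref{prop:GL3GL2RS}, the $x_2$-integral over $[0,1]$ produces $\int_0^1 e(m_2 x_2)\, dx_2 = 0$ for every $m_2\neq 0$, so the constant term of $f_{y_1}$ vanishes; the odd $u_j$ drop out for the same reason, consistently with the vanishing already recorded there.

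Next I would integrate this identity against $dy_1/y_1$. Since every term on the right is nonnegative, Tonelli's theorem permits interchanging the $y_1$-integral with the $j$-sum and with the $\tau$-integral, giving
\[
N(F) = \sum_{j\geq 1}\int_0^\infty |c_j(y_1)|^2\, \frac{dy_1}{y_1} + \frac{1}{4\pi}\intR \Big(\int_0^\infty |d_\tau(y_1)|^2\, \frac{dy_1}{y_1}\Big)\, d\tau .
\]
By the definitions \eqref{eq:Lambdaintegralrep} and \eqref{eq:LambdaintegralrepEisenstein} one has $\mathcal{L}(s, F\times\overline{u_j}) = \int_0^\infty c_j(y_1)\, y_1^{2s-1}\, \frac{dy_1}{y_1}$; after the substitution $y_1 = e^{\theta}$, which turns $dy_1/y_1$ into Lebesgue measure and $\mathcal{L}(\thalf + it, F\times\overline{u_j})$ into a Fourier transform of $\theta\mapsto c_j(e^{\theta})$ in the variable $2t$, Plancherel's theorem yields $\int_0^\infty |c_j(y_1)|^2\, \frac{dy_1}{y_1} = \frac1\pi\intR |\mathcal{L}(\thalf + it, F\times\overline{u_j})|^2\, dt$, and identically with $c_j$ replaced by $d_\tau$ and $\overline{u_j}$ by $\overline{E}(\cdot, \thalf + i\tau)$. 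Substituting these back, the combination $\tfrac{1}{4\pi}\cdot\tfrac1\pi = \tfrac{1}{4\pi^2}$ emerges on the Eisenstein term and \eqref{eq:normformula} follows; the identity holds a priori in $[0,\infty]$, and that both sides are finite is the assertion of Theorem \ref{thm:normbound}. The step requiring the most care is the analytic bookkeeping behind the two applications of Mellin--Plancherel: one must know that $c_j(y_1)$ and $d_\tau(y_1)$ decay rapidly as $y_1\to 0$ and as $y_1\to\infty$, so that their Mellin transforms are honestly the functions $\mathcal{L}(\thalf + it, \cdot)$ rather than boundary values obtained through a limiting procedure on truncations — this decay reflects the rapid decay of the $\SL{3}{Z}$ cusp form $F$ toward the cusps of $\mathcal{H}^3$ along the restricted variety (equivalently, the analytic properties of the $y_1$-integral, the rank-lowering operator mentioned after Proposition \ref{prop:GL3GL2RS}, together with the entirety and vertical-strip decay of $\mathcal{L}$). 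Everything else is a change of variables and Tonelli's theorem applied to nonnegative integrands.
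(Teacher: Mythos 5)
Your proposal is correct and follows essentially the same route as the paper: restrict to $f_{y_1}$, apply the $GL_2$ spectral decomposition and Parseval (with the constant-term and odd-form contributions vanishing exactly as in the proof of Proposition \ref{prop:GL3GL2RS}), then integrate in $y_1$ and use Mellin--Plancherel to identify each term with $\frac1\pi\intR|\mathcal{L}(\thalf+it,\cdot)|^2\,dt$. The only cosmetic difference is that you make the Tonelli interchange and the decay of $c_j(y_1)$, $d_\tau(y_1)$ explicit, which the paper leaves implicit via the rapid decay of $F$.
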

\begin{proof}
 Since $F$ is a Maass form, it is smooth and has rapid decay and hence $f_{y_1}(z_2) := F\begin{pmatrix} z_2 y_1 & \\ & 1 \end{pmatrix} \in L^2(\SL{2}{Z} \backslash \mathcal{H}^2)$ (as a function of $z_2$, with $y_1>0$ an arbitrary parameter).  
By the spectral theory of $\SL{2}{Z}$, we have
\begin{equation}
 f_{y_1}(z_2) = \sum_{j \geq 0} \langle f_{y_1}, u_j \rangle u_j(z_2) + \frac{1}{4 \pi} \intR \langle f_{y_1}, E(\cdot, \thalf + i \tau) \rangle E(z_2, \thalf + i \tau) d\tau,
\end{equation}
where $u_0$ 
is the constant eigenfunction.  The computations in the proof of Proposition \ref{prop:GL3GL2RS} 
show that $\langle f_{y_1}, u_0 \rangle = 0$.  Then Parseval's formula reads
\begin{equation}
\int_{\SL{2}{Z} \backslash \mathcal{H}^2} |f_{y_1}(z_2)|^2 d^*z_2 = \sum_{j \geq 1} |\langle f_{y_1}, u_j \rangle|^2 + \frac{1}{4 \pi} \intR |\langle f_{y_1}, E(\cdot, \thalf + i \tau) \rangle |^2  d\tau.
\end{equation}

The Plancherel formula says
\begin{equation}
 \int_0^{\infty} |\phi(y)|^2 \frac{dy}{y} = \frac{1}{\pi} \intR |\widetilde{\phi}(2it)|^2 dt, \quad \text{where} \quad 
 \widetilde{\phi}(2it) = \int_0^{\infty} \phi(y) y^{2it} \frac{dy}{y}.
\end{equation}
Therefore,
\begin{equation}
 \int_0^{\infty} |\langle f_{y_1}, u_j \rangle|^2 \frac{dy_1}{y_1} = \frac{1}{\pi} \intR \left| \int_0^{\infty} \int_{\SL{2}{Z} \backslash \mathcal{H}^2} f_{y_1}(z_2) \overline{u_j}(z_2) d^*z_2 \thinspace y_1^{2it} \frac{dy_1}{y_1} \right|^2 dt,
\end{equation}
which in view of \eqref{eq:Lambdaintegralrep} gives
\begin{equation}
 \int_0^{\infty} |\langle f_{y_1}, u_j \rangle|^2 \frac{dy_1}{y_1} = \frac{1}{\pi} \intR |\mathcal{L}(\thalf + it, F \times \overline{u_j})|^2 dt.
\end{equation}
Similarly, we have
\begin{equation}
 \int_0^{\infty} |\langle f_{y_1}, E(\cdot, \thalf + i\tau)\rangle|^2 \frac{dy_1}{y_1} = \frac{1}{\pi} \intR \left| \int_0^{\infty} \int_{\SL{2}{Z} \backslash \mathcal{H}^2} f_{y_1}(z_2) \overline{E}(z_2, \thalf + i\tau) d^*z_2 \thinspace y_1^{2it} \frac{dy_1}{y_1} \right|^2 dt,
\end{equation}
which in view of \eqref{eq:LambdaintegralrepEisenstein} gives
\begin{equation}
 \frac{1}{4\pi} \intR \int_0^{\infty} |\langle f_{y_1}, E(\cdot, \thalf + i\tau)\rangle|^2 \frac{dy_1}{y_1} d\tau = \frac{1}{4 \pi^2} \intR \intR |\mathcal{L}(\thalf + it, F \times \overline{E}(\cdot, \thalf + i\tau)|^2 dt d\tau.
\end{equation}
Recalling the definition \eqref{eq:N(F)} and gathering the above equations completes the proof.
\end{proof}

\section{Exercises with Stirling's approximation}
\label{section:scaling}
Our goal is to use Theorem \ref{thm:normformula} to estimate $N(F)$.  Recall \eqref{eq:LambdaL} and \eqref{eq:LambdaLE}, \eqref{eq:Gtau}, and \eqref{eq:rhoj1} and \eqref{eq:rhoj1E}.  It is necessary to understand the size of $|G_{\tau}(\half + it)|$, as well as the size of the analytic conductor of $L(\half + it, F \times u_j)$.  We perform these computations in this section.  Without this work it is not even obvious how to bound $N(F)$ on the assumption of the Lindel\"{o}f hypothesis for all the $L$-functions under consideration.

Suppose that $F$ is in the tempered spectrum of $\Delta$, which recall means that $\alpha, \beta, \gamma$ given by \eqref{eq:alpha}-\eqref{eq:gamma} are real.
By re-labelling these parameters, we suppose that
\begin{equation}
\label{eq:Langlandsordering}
\gamma \leq \beta \leq \alpha.  
\end{equation}
Combining this ordering with the relation $\alpha + \beta + \gamma =0$ (which follows directly from \eqref{eq:alpha}-\eqref{eq:gamma}), observe the simple inequalities $\alpha + 2\gamma \leq 0 = \alpha + \beta + \gamma \leq 2\alpha + \gamma$ which imply
 $\alpha \leq -2 \gamma$, and $-\gamma \leq 2 \alpha$.  
Set 
\begin{equation}
 \label{eq:Tdef}
T= |\alpha| + |\beta| + |\gamma|,
\end{equation}
so that $T \asymp \lambda_F(\Delta)^{1/2}$, and we have $\alpha \asymp |\gamma| \asymp T$.  We use $T$ as a fundamental parameter for the rest of the paper.
It is convenient to observe that in estimating $N(F)$ we may suppose without loss of generality that $\beta \geq 0$.  This follows from the functional equations of the Rankin-Selberg $L$-functions on the right hand side of \eqref{eq:normformula}, which replace $F$ by its dual, switching the signs of the Langlands parameters.

\begin{mylemma}
\label{lemma:meanvaluereduction}
 Suppose that $F$ is an $\SL{3}{Z}$ Hecke-Maass form which is in the tempered spectrum of $\Delta$ with $\gamma \leq 0 \leq \beta \leq \alpha$.  Let $u_j$ and $E(z,s)$ be as in Section \ref{section:background}.  Write
\begin{equation}
 L(s, F \times u_j) = \sum_{n=1}^{\infty} \frac{\lambda_{F \times u_j}(n)}{n^s}, \quad L(s, F \times E(\cdot, \thalf + i\tau)) = \sum_{n=1}^{\infty}  \frac{\lambda_{F \times E_{\tau}}(n)}{n^s}.
\end{equation}
Then there exist $O(\log^2{T})$ tuples $(R,S,D, Q, T_0)$ of real numbers and a smooth function $W$ satisfying the following properties:
\begin{equation}
\begin{split}
\label{eq:RDST}
1 \ll R \ll (\alpha - \beta) + \log^2{T}, \qquad R \ll D \leq S \ll T, 
\\
T_0 \in \{\alpha, \beta, \gamma\}, \qquad Q \asymp T^2 DR (S + (\alpha - \beta)) (1 + (\alpha-\beta)),
\end{split}
\end{equation}
$W$ satisfies \eqref{eq:WAFE}, and 
\begin{multline}
\label{eq:normUB}
 N(F) \ll \sum_{(R,S,D,Q,T_0, \pm)} Q^{-\half+\varepsilon} \Big[\int_{-R}^{R} \sum_{S \leq t_j \leq S + D} 
\alpha_j
\Big|\sum_{n \geq 1} \frac{\lambda_{F \times u_j}(n) W(n)}{n^{\half + it \pm it_j + iT_0}}\Big|^2 dt 
\\
+ \frac{1}{4 \pi} \int_{-R}^{R} \int_{S}^{S+D} 
\alpha_{\tau}
\Big|\sum_{n \geq 1}  \frac{\lambda_{F \times E_{\tau}}(n) W(n)}{n^{\half + it \pm i\tau + iT_0}}\Big|^2 d\tau dt \Big]  + O(T^{\varepsilon}).
\end{multline}
Furthermore, if $T_0 = \gamma$ then the following more restrictive relations hold: $D \ll \alpha-\beta$ and $S \asymp T$.
\end{mylemma}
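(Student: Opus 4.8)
The plan is to begin with Theorem~\ref{thm:normformula}, substitute the factorizations \eqref{eq:LambdaL} and \eqref{eq:LambdaLE}, and then use Stirling's approximation on the archimedean factor $G_\tau$ of Proposition~\ref{prop:Gj} to locate the part of the spectral/archimedean parameter space that actually contributes. First I would write $|\mathcal{L}(\thalf+it,F\times\overline{u_j})|^2 = |\rho_j(1)|^2\,|G_{t_j}(\thalf+it)|^2\,|L(\thalf+it,F\times u_j)|^2$ (and the analogous identity for the Eisenstein term), and use $|\rho_j(1)|^2=\alpha_j\cosh(\pi t_j)$ together with $|\rho(1,\thalf+i\tau)|^2=\alpha_\tau\cosh(\pi\tau)$, $\alpha_\tau=|\zeta(1+2i\tau)|^{-2}$. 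Applying Stirling to the ratio of six over three Gamma factors in \eqref{eq:Gtau} produces $\cosh(\pi t_j)\,|G_{t_j}(\thalf+it)|^2 \asymp P(t,t_j)\,e^{\Phi(t,t_j)}$, where, with $g(x)=|x-\alpha|+|x-\beta|+|x-\gamma|$,
\[
 \Phi(t,t_j)=\pi t_j-\tfrac{\pi}{2}\big(g(t-t_j)+g(t+t_j)\big)+\pi(\alpha-\gamma),
\]
and $P(t,t_j)\asymp q(\thalf+it)^{-1/2}$, where $q(\thalf+it)\asymp \prod_{\pm,\,\delta\in\{\alpha,\beta,\gamma\}}(1+|t\pm t_j-\delta|)$ is the analytic conductor of $L(s,F\times u_j)$ at $\thalf+it$.

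Next I would pin down the support of $e^{\Phi}$. Since $g$ is convex, $\Phi$ is concave, so for any $C$ the superlevel set $\{(t,t_j):\Phi(t,t_j)\geq -C\log T\}$ is convex. Computing $\Phi$ on each linearity cell (cut out by the six lines $t\pm t_j=\delta$) shows that $\Phi\equiv 0$ on the quadrilateral $\mathcal{Q}=\{\gamma\leq t-t_j\leq\beta,\ \beta\leq t+t_j\leq\alpha\}$ (which automatically forces $0\leq t_j\leq T/2$), that $\Phi\leq 0$ everywhere, and that $\Phi$ decreases with slope $\geq\pi$ as one moves transversally out of $\mathcal{Q}$; hence $\{\Phi\geq -C\log T\}$ is $\mathcal{Q}$ enlarged by $O(\log T)$ in each direction. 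Everything outside this set is pushed into the error $O(T^\varepsilon)$: there $e^{\Phi}$ is polynomially small of arbitrarily high order, which easily defeats the crude bounds $\sum_{t_j\leq T}\alpha_j\ll T^{O(1)}$ (from Theorem~\ref{thm:Kuznetsov}), $\int_{|t|\ll T}|L(\thalf+it,F\times u_j)|^2\,dt\ll T^{O(1)}|A_F(1,1)|^2$ (from the approximate functional equation and Lemma~\ref{lemma:Molteni}), and $|A_F(1,1)|^2\ll T^{O(1)}$ (a crude lower bound for $\mathrm{Res}_{s=1}L(s,F\times\overline{F})$ via Proposition~\ref{prop:Landau}, requiring no information on Landau--Siegel zeros); the Eisenstein term is identical with $\sum_{t_j\leq T}\alpha_j$ replaced by $\int_0^T\alpha_\tau\,d\tau\ll T^{1+\varepsilon}$.

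On the enlarged quadrilateral I would then run a dyadic decomposition, using $e^{\Phi}\leq 1$ to discard $e^{\Phi}$ and $q(\thalf+it)^{-1/2}\asymp Q^{-1/2}$ on each piece. The conductor $q(\thalf+it)$ is a product of six linear forms, each of polynomial size; partitioning so that each form is dyadically fixed and so that $t_j$ runs over an interval $[S,S+D]$ with $D\leq S$ yields only $O(\log^2 T)$ essentially distinct pieces once one observes that several forms are forced (e.g.\ $|t+t_j-\gamma|\asymp T$ throughout $\mathcal{Q}$, since $\beta-\gamma\asymp T$). On each piece the conductor is $\asymp Q$ for a $Q$ of the stated shape $T^2DR(S+(\alpha-\beta))(1+(\alpha-\beta))$, and, for each fixed $t_j$, the slice $\{t:(t,t_j)\in\mathcal{Q}\}$ is an interval of length $\ll(\alpha-\beta)+\log^2 T$ one of whose endpoints — for the appropriate choice of sign — equals $\pm t_j+T_0$ with $T_0\in\{\alpha,\beta,\gamma\}$: for $t_j\lesssim(\beta-\gamma)/2$ this slice is $[\beta-t_j,\alpha-t_j]$ (or $[\beta-t_j,\beta+t_j]$ when $t_j\lesssim(\alpha-\beta)/2$), giving $T_0=\beta$ (or $\alpha$) and $R\asymp(\alpha-\beta)+\log^2 T$; for $(\beta-\gamma)/2\lesssim t_j\leq T/2$ — an interval of $t_j$ of length exactly $(\alpha-\beta)/2$, forcing $S\asymp T$ and $D\ll\alpha-\beta$ — it is $[t_j+\gamma,\alpha-t_j]$, giving $T_0=\gamma$. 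Carrying out this decomposition while simultaneously keeping the piece count at $O(\log^2 T)$, matching the conductor to the stated $Q$, and correctly treating the boundary region $t_j\approx T/2$ is the combinatorial heart of the argument and the step I expect to be most delicate.

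Finally, on each piece I would apply Lemma~\ref{lemma:AFE} to $|L(\thalf+it,F\times u_j)|^2$ with the conductor bound $Q$, obtaining a weight $W$ satisfying \eqref{eq:WAFE}; interchanging the auxiliary $v$-integral of that lemma with the $t$-integral by Fubini, estimating it by its supremum, and substituting $t\mapsto t\pm t_j+T_0$ to place the remaining $t$-interval inside $[-R,R]$ (which turns $n^{-it}$ into $n^{-it\mp it_j-iT_0}$ in the Dirichlet polynomial), converts the corresponding portion of $\tfrac1\pi\sum_j|\mathcal{L}(\thalf+it,F\times\overline{u_j})|^2$ into
\[
 Q^{-\half+\varepsilon}\int_{-R}^{R}\ \sum_{S\leq t_j\leq S+D}\alpha_j\Big|\sum_{n\geq 1}\frac{\lambda_{F\times u_j}(n)W(n)}{n^{\half+it\pm it_j+iT_0}}\Big|^2\,dt,
\]
in agreement with \eqref{eq:normUB}. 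The Eisenstein contribution is handled verbatim with $\rho_j(1),\alpha_j,\lambda_j$ replaced by $\rho(1,\thalf+i\tau),\alpha_\tau,\lambda(\cdot,\thalf+i\tau)$ and the same $G_\tau,\Phi,Q$. Summing over the $O(\log^2 T)$ pieces and adding the negligible remainder gives \eqref{eq:normUB}.
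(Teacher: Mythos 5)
Your route is the paper's own: start from Theorem \ref{thm:normformula}, factor out $\rho_j(1)$ (resp.\ $\rho(1,\thalf+i\tau)$) and $G_{t_j}$, apply Stirling to $G_\tau$ — your $\Phi$ is exactly $-\tfrac{\pi}{2}W_F$ from Lemma \ref{lemma:Gsize}, with the same quadrilateral $\beta\le t+t_j\le\alpha$, $\gamma\le t-t_j\le\beta$ as essential support — then partition dyadically so the conductor is fixed (the paper's Lemma \ref{lemma:partition}), apply the approximate functional equation of Lemma \ref{lemma:AFE}, and decouple $t$ from $t_j$ by the shift $t\mapsto t\pm t_j+T_0$ plus positivity. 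Your treatment of the exterior region (superpolynomial decay of $e^{\Phi}$ beating crude polynomial bounds, including a polynomial bound on $|A_F(1,1)|^2$ via a crude residue bound) is a reasonable filling-in of what the paper leaves as ``$O(T^{-100})$''.

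There is, however, a concrete gap precisely at the step you defer as the ``combinatorial heart'', and as written your parameter assignments do not deliver \eqref{eq:RDST}. You cannot simultaneously take the $t$-range to be the full slice of $\mathcal{Q}$ (so ``$R\asymp(\alpha-\beta)+\log^2 T$'') and claim $q_F(t,t_j)\asymp Q$ on the piece: along the slice $[\beta-t_j,\alpha-t_j]$ the factor $(1+|t+t_j-\beta|)(1+|t+t_j-\alpha|)$ sweeps from $\asymp(\alpha-\beta)$ up to $\asymp(\alpha-\beta)^2$, a polynomially large spread, and for pieces with small windows (e.g.\ $t_j\asymp 1$) the blanket choice of $R$ violates both $R\ll D$ and $Q\asymp T^2DR(S+(\alpha-\beta))(1+(\alpha-\beta))$. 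What is needed — and what the paper's ten-case table carries out — is to keep the finer localization $|t+t_j-X_1|\ll U$, $|t-t_j-Y_1|\ll V$ with $X_1\in\{\beta+U,\alpha-2U\}$, $Y_1\in\{\beta-2V,\gamma+V\}$, and then take $R\asymp\min(U,V)$, $D\asymp\max(U,V)$, shifting $t$ toward the endpoint on the side of the \emph{smaller} window (by $+t_j+Y_1$ if $U>V$, by $-t_j+X_1$ if $U\le V$); only with that choice do $R\ll D$, the stated $Q$-relation, and the extra constraints when $T_0=\gamma$ (namely $D\ll\alpha-\beta$, $S\asymp T$) all hold, and only then does the same $Q$ legitimately serve both as the pointwise lower bound for the conductor and as the conductor in Lemma \ref{lemma:AFE}. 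Two smaller points: in the regime $t_j\lesssim(\alpha-\beta)/2$ both endpoints of $[\beta-t_j,\beta+t_j]$ give $T_0=\beta$ (your ``or $\alpha$'' labeling comes instead from pieces with $X$ near $\alpha$), and your $O(T^{\varepsilon})$ accounts only for the exterior region, whereas the degenerate pieces with $S\ll\log^2 T$ (essentially no family, handled in the paper by the convexity bound) must also be absorbed into that error term.
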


It follows from Lemma \ref{lemma:meanvaluereduction} that the Lindel\"{o}f hypothesis implies $N(F) \ll T^{\varepsilon} |A_F(1,1)|^2$.  The content of Theorem \ref{thm:normbound} is thus to remove this unproved hypothesis.  In some cases, such as when $\beta = 0$, $\alpha = -\gamma \asymp T$, $R \asymp T^{1-\varepsilon}$, $S \asymp D \asymp T$, the bound in Theorem \ref{thm:normbound} corresponds to the Lindel\"{o}f hypothesis on average.  In other cases, such as when $\beta=0$, $R \asymp D \asymp S \asymp 1$, the family is very small and we can only claim the convexity bound.

The proof of Lemma \ref{lemma:meanvaluereduction} takes up this section; we prove some intermediate lemmas building up to the full proof of Lemma \ref{lemma:meanvaluereduction}.  At its essence, the proof is simply a long but elementary computation with many cases to consider.

Define the very convenient variables
\begin{equation}
\label{eq:XY}
 X = t + \tau, \qquad Y = t - \tau.
\end{equation}
Observe that $X \geq Y$ iff $\tau \geq 0$, which shall be true in the forthcoming arguments.  Next for any real number $Z$ define
\begin{equation}
 q_{F}(Z) = (1 + |Z-\alpha|)(1 + |Z-\beta|)(1 + |Z-\gamma|),
\end{equation}
and then with $X, Y$ as in \eqref{eq:XY}, set
\begin{equation}
 q_{F}(t,\tau) = q_{F}(X) q_{F}(Y).
\end{equation}
Observe that $L(\thalf + it, F \times u_j)$ and $L(\thalf + it, F \times E(\cdot, \thalf + i \tau)$ both have analytic conductor (as in Chapter 5 of \cite{IK}) $q_{F}(t,\tau)$, where in the $u_j$ case, $\tau = t_j$.

\begin{mylemma}
\label{lemma:Gsize}
Let $\gamma \leq 0 \leq \beta \leq \alpha$, with $\alpha + \beta + \gamma = 0$.  Suppose $X \geq Y$ and let $X_0 \geq 0$.  If $X \not \in [\beta - X_0, \alpha + X_0]$ then
\begin{equation}
\label{eq:GboundX}
 \cosh(\pi \tau) |G_{\tau}(\thalf + it)|^2 \ll \exp(-\pi X_0).
\end{equation}
Similarly, suppose $Y_0 \geq 0$.  If $Y \not \in [\gamma - Y_0, \beta + Y_0]$ then
\begin{equation}
\label{eq:GboundY}
 \cosh(\pi \tau) |G_{\tau}(\thalf + it)|^2 \ll \exp(- \pi Y_0).
\end{equation}
On the other hand, if
\begin{equation}
 \beta - X_0 \leq X \leq \alpha + X_0, 
\qquad
 \gamma - Y_0 \leq Y \leq \beta + Y_0,
\end{equation}
then
\begin{equation}
\label{eq:noexpdecay}
 \cosh(\pi \tau) |G_{\tau}(\thalf + it)|^2 \ll q_{F}(t,\tau)^{-\half},
\end{equation}
where the implied constant is absolute.
\end{mylemma}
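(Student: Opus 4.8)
The strategy is a direct Stirling-estimate analysis of the explicit formula \eqref{eq:Gtau} for $G_\tau(s)$ at $s = \thalf + it$. Recall that $\cosh(\pi\tau)$ enters because the $K$-Bessel function normalization of $u_j$ carries a factor whose square is $\cosh(\pi t_j)$; so $\cosh(\pi\tau)|G_\tau(\thalf+it)|^2$ is the ``correctly normalized'' archimedean weight. First I would record the denominator of \eqref{eq:Gtau}: it is a product of three fixed Gamma factors $\Gamma(\tfrac{1+i(\gamma-\beta)}{2})$, $\Gamma(\tfrac{1+i(\beta-\alpha)}{2})$, $\Gamma(\tfrac{1+i(\gamma-\alpha)}{2})$ together with $\pi$-powers, all independent of $t,\tau$; by Stirling each such $|\Gamma(\tfrac{1+iu}{2})|$ is $\asymp (1+|u|)^{-1/2}\exp(-\tfrac{\pi}{4}|u|)$, so the denominator contributes a fixed quantity $\asymp \big[(1+|\alpha-\beta|)(1+|\alpha-\gamma|)(1+|\beta-\gamma|)\big]^{-1/2}\exp(-\tfrac{\pi}{4}(|\alpha-\beta|+|\alpha-\gamma|+|\beta-\gamma|))$. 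Since $\gamma\le 0\le\beta\le\alpha$ and $\alpha+\beta+\gamma=0$, one has $|\alpha-\beta|+|\alpha-\gamma|+|\beta-\gamma| = 2(\alpha-\gamma)$, so this exponential is exactly $\exp(-\tfrac{\pi}{2}(\alpha-\gamma))$; I'll keep this for cancellation against the numerator.

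Next, for the numerator, write the six Gamma factors in the two groups corresponding to $X = t+\tau$ and $Y = t-\tau$: namely $\Gamma(\tfrac{\thalf + iX \mp i\alpha}{2})$, etc., where the ``$s = \thalf$'' shift in the argument only changes the polynomial (not the exponential) part of Stirling. The modulus of $\Gamma(\tfrac{\thalf + i(X-\alpha)}{2})$ is $\asymp (1+|X-\alpha|)^{-1/4}\exp(-\tfrac{\pi}{4}|X-\alpha|)$. So $|G_\tau(\thalf+it)|^2$ has numerator exponential factor $\exp(-\tfrac{\pi}{2}\Sigma)$ where $\Sigma = |X-\alpha|+|X-\beta|+|X-\gamma|+|Y-\alpha|+|Y-\beta|+|Y-\gamma|$, and $\cosh(\pi\tau)\asymp\exp(\pi|\tau|)$ contributes $+\pi|\tau|$ to the exponent. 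Thus the full exponent (after including the squared denominator's $+\tfrac{\pi}{2}\cdot 2(\alpha-\gamma) = +\pi(\alpha-\gamma)$ from moving it to numerator, i.e. dividing) is
\[
-\tfrac{\pi}{2}\Sigma + \pi|\tau| + \pi(\alpha-\gamma).
\]
The arithmetic heart of the proof is the elementary case analysis showing that this exponent is $\le -\pi X_0$ when $X\notin[\beta-X_0,\alpha+X_0]$, is $\le -\pi Y_0$ when $Y\notin[\gamma-Y_0,\beta+Y_0]$, and is $\le 0$ (with the polynomial factor giving the claimed $q_F(t,\tau)^{-1/2}$) in the complementary range. For this I use $|\tau| = \tfrac12|X-Y| = \tfrac12(X-Y)$ if $X\ge Y$ and $\tau\ge0$ (the standing assumption, from $X\ge Y$; note $\tau = (X-Y)/2$), and I evaluate $|X-\alpha|+|X-\beta|+|X-\gamma|$ piecewise according to where $X$ lies relative to $\gamma,\beta,\alpha$, and likewise for $Y$. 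In the ``central box'' $\beta-X_0\le X\le\alpha+X_0$ and $\gamma-Y_0\le Y\le\beta+Y_0$ with $X_0=Y_0=0$ one checks $\Sigma = (X-\gamma)+(\alpha-X) \cdot\text{(sign bookkeeping)} + \ldots$ collapses so that $-\tfrac{\pi}{2}\Sigma + \pi\tau + \pi(\alpha-\gamma)=0$ identically; moving a boundary outward by $X_0$ or $Y_0$ only makes $\Sigma$ grow, producing the claimed $\exp(-\pi X_0)$ or $\exp(-\pi Y_0)$ decay. The leftover polynomial factors assemble, via Stirling's $(1+|\cdot|)^{-1/4}$ terms squared, into precisely $q_F(X)^{-1/2}q_F(Y)^{-1/2} = q_F(t,\tau)^{-1/2}$, modulo bounded factors; the fixed denominator polynomial $[(1+|\alpha-\beta|)(1+|\alpha-\gamma|)(1+|\beta-\gamma|)]^{-1/2}$ is $O(1)$ times something $\ge$ the needed size, and in the central box one verifies it does not spoil the clean bound \eqref{eq:noexpdecay}.

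The main obstacle is purely organizational: the sign of each $X-\alpha,\ X-\beta,\ X-\gamma$ (and the $Y$ analogues) depends on the position of $X$ (resp. $Y$), giving several subregions, and one must check in each that the net exponential coefficient of $X_0$ (resp. $Y_0$) is $\le -\pi$ and that the ``$\tau\ge 0$'' contribution $+\pi\tau$ is correctly absorbed — in particular using $\alpha\le -2\gamma$ and $-\gamma\le 2\alpha$ (derived just before the lemma) in the borderline cases where $X$ or $Y$ overshoots into the range governed by the opposite-sign parameter. I would handle the $X$-statement \eqref{eq:GboundX} and the $Y$-statement \eqref{eq:GboundY} separately and symmetrically (the roles of the parameter-triples $\{\alpha,\beta\}$ versus $\{\beta,\gamma\}$ and the signs of $\tau$'s contribution differ), then combine them with the central-box computation \eqref{eq:noexpdecay}; throughout, all implied constants are absolute since $T_0,\alpha,\beta,\gamma$ enter only through the explicit Stirling asymptotics and no property of $F$ beyond temperedness and the ordering is used.
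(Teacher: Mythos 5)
Your plan follows essentially the same route as the paper: apply Stirling to the explicit formula \eqref{eq:Gtau}, combine the exponential contributions of the six numerator Gammas, the three fixed denominator Gammas, and $\cosh(\pi\tau)$ into a single piecewise-linear exponent (your $-\tfrac{\pi}{2}\Sigma+\pi|\tau|+\pi(\alpha-\gamma)$ is exactly the paper's $-\tfrac{\pi}{2}W_F(t,\tau)$), and then run the case analysis in $X,Y$; the polynomial parts of the numerator Gammas indeed assemble into $q_F(t,\tau)^{-1/2}$. The paper handles the central-box verification a bit more slickly (since $W_F$ is piecewise linear with limit $+\infty$, its minimum occurs at a critical point, and by symmetry it suffices to check $X=\alpha$, where the triangle inequality $|Y-\beta|+|Y-\gamma|\ge|\beta-\gamma|$ finishes), and records the off-box cases in a table, but this is the same computation you describe.

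One correction is needed, and it matters for \eqref{eq:noexpdecay}: the three denominator Gammas $\Gamma\bigl(\tfrac{1+iu}{2}\bigr)$ are evaluated at real part $\tfrac12$, so Stirling gives $|\Gamma(\tfrac12+i\tfrac{u}{2})|\asymp \exp(-\tfrac{\pi}{4}|u|)$ with \emph{no} polynomial factor (the exponent is $\sigma-\tfrac12=0$), not $(1+|u|)^{-1/2}\exp(-\tfrac{\pi}{4}|u|)$ as you claim. Carrying your asymptotic through would attach to $\cosh(\pi\tau)|G_{\tau}(\thalf+it)|^2$ the extra factor $(1+|\alpha-\beta|)(1+|\alpha-\gamma|)(1+|\beta-\gamma|)$, which is of size roughly $(1+\alpha-\beta)T^2$ and is certainly not absorbed by $q_F(t,\tau)^{-1/2}$ in the central box, so your assertion that the denominator polynomial ``does not spoil the clean bound'' would be false as stated. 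With the correct Stirling the denominator contributes, after squaring and inverting, only the exponential $\exp(\tfrac{\pi}{2}(|\alpha-\beta|+|\alpha-\gamma|+|\beta-\gamma|))$, and your computation then reproduces the paper's estimate $\cosh(\pi\tau)|G_{\tau}(\thalf+it)|^2\ll q_F(t,\tau)^{-1/2}\exp(-\tfrac{\pi}{2}W_F(t,\tau))$ verbatim, from which \eqref{eq:GboundX}, \eqref{eq:GboundY} and \eqref{eq:noexpdecay} follow by the case analysis. (Also, the inequalities $\alpha\le-2\gamma$ and $-\gamma\le2\alpha$ are not actually needed anywhere; the ordering $\gamma\le\beta\le\alpha$ together with $\alpha+\beta+\gamma=0$ suffices, as in the paper's table.)
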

\begin{proof}
 Recall that Stirling's approximation implies $|\Gamma(\sigma + i v)|^2 \ll (1+|v|)^{-1 + 2 \sigma} \exp(-\pi|v|)$ for $\sigma > 0$ fixed and all $v \in \mr$.  A computation then gives
\begin{equation}
 \cosh(\pi \tau) |G_{\tau}(\thalf + it)|^2 \ll \exp\big(- \frac{\pi}{2} W_{F}(t,\tau)\big) q_{F}(t,\tau)^{-\half},
\end{equation}
where (note $\cosh(\pi \tau) \ll \exp(\frac{\pi}{2} |X-Y|)$)
\begin{multline}
 W_{F}(t,\tau) = -|X-Y| - |\alpha-\beta| - |\alpha-\gamma| - |\beta-\gamma| 
\\
+ |X-\alpha| + |X-\beta| + |X-\gamma| + |Y-\alpha| + |Y-\beta| + |Y-\gamma|.
\end{multline}
Note that $W_F$ is invariant under permutations of $\alpha, \beta, \gamma$, and also invariant under switching $X$ and $Y$.
We first show $W_F(t,\tau) \geq 0$ for all $t, \tau \in \mr$, or equivalently, all $X, Y \in \mr$.  Observe that $W_F$ is piecewise linear and has limit $+\infty$ as $X$ or $Y$ approach $\pm \infty$.  Therefore its minimum occurs at a critical point.  By symmetry (temporarily forgetting our ordering of the Langlands parameters), it suffices to check that $W_F(t,\tau) \geq 0$ at $X = \alpha$.  In this case,
\begin{equation}
 W_{F}(t,\tau) \big|_{X=\alpha} =  |Y-\beta| + |Y-\gamma| - |\beta-\gamma| \geq 0,
\end{equation}
by the triangle inequality.  This gives \eqref{eq:noexpdecay}, as desired.

By a tedious brute-force computation we obtain the following table of values of $\half W_{F}(t,\tau)$ for $\gamma \leq \beta \leq \alpha$.  We only display the ranges with $X \geq Y$; the rest can be obtained quickly observing that $W_F(t,\tau)$ is symmetric in $X$ and $Y$.
\begin{equation}
 \begin{tabular}{l||c|c|c|c}
$\half W_F(t,\tau)$ & $X \leq \gamma$ & $ < X \leq \beta$ & $<X \leq \alpha$ & $\alpha<X$ \\
 \hline \hline 
$Y \geq \alpha$ &  &  &  & \parbox{2cm}{$(X-\beta)$ \\ $ + 2(Y-\alpha)$} \\ 
\hline
$\alpha>Y \geq \beta$ &  &  & $(Y-\beta)$ & \parbox{2cm}{$(X-\alpha)$ \\ $+(Y-\beta)$} \\
\hline
$\beta>Y \geq \gamma$ &  & $ (\beta - X)$ & $0$ & $(X-\alpha)$ \\
\hline
$ \gamma > Y$ & \parbox{2cm}{$2(\gamma-X)$\\ $+ (\beta-Y)$} & \parbox{2cm}{$ (\beta - X)$ \\ $ +(\gamma-Y)$} & $(\gamma-Y)$ & \parbox{2cm}{$(X-\alpha)$ \\ $+ (\gamma-Y)$}
\end{tabular}
\end{equation}
In particular, we directly read from the table the bounds \eqref{eq:GboundX} and \eqref{eq:GboundY}.
\end{proof}

In practice, Lemma \ref{lemma:Gsize} says that $N(F)$ is determined by the range $\beta - \log^{2}{T} \leq X \leq \alpha + \log^2{T}$ and $\gamma - \log^{2}{T} \leq Y \leq \beta + \log^2{T}$, say.
The reason is that we may assume $X \geq Y$ in view of the expression \eqref{eq:normformula} which naturally has $t_j \geq 0$ for the discrete spectrum, and by symmetry we may suppose $\tau \geq 0$ in the continuous spectrum. For $X$ or $Y$ outside of this range there is exponential decay in the completed $L$-function.  

We need a still finer dissection of the $X$ and $Y$ ranges in order to fix the size of $q_{F}(t,\tau)$.  
\begin{mylemma}
\label{lemma:qXYsize}
Suppose $\gamma \leq 0 \leq \beta \leq \alpha$, with $\alpha + \beta + \gamma = 0$.  If
 $X = \beta +l$, with $0 \leq l \leq \frac{\alpha-\beta}{2}$, or $X = \alpha - l$ with $0 \leq l \leq \frac{\alpha-\beta}{2}$, then
\begin{equation}
 q_F(X) \asymp T(1+ (\alpha-\beta))(1 + l).
\end{equation}
If $Y = \beta - s$ with $ 0 \leq s \leq \frac{\alpha-\beta}{2}$, then
\begin{equation}
 q_F(Y) \asymp T(1+ (\alpha-\beta))(1 + s).
\end{equation}
If $Y = \beta - s$ with $ \frac{\alpha-\beta}{2} \leq s \leq \frac{\beta-\gamma}{2}$ then
\begin{equation}
 q_F(Y) \asymp T (1 + s)^2.
\end{equation}
If $Y = \gamma + s$ with $|s| \leq \frac{\beta-\gamma}{2}$ then
\begin{equation}
 q_F(Y) \asymp T^2 (1 + |s|).
\end{equation}
If $|Z-\alpha| \leq \log^2{T}$ or $|Z-\beta| \leq \log^2{T}$ 
then
\begin{equation}
 (1 + (\alpha-\beta))T \ll q_F(Z) \ll \log^2{T} (1 + (\alpha-\beta)) T.
\end{equation}
\end{mylemma}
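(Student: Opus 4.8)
The plan is to prove each of the five estimates by a direct computation, bounding the three factors $1+|Z-\alpha|$, $1+|Z-\beta|$, $1+|Z-\gamma|$ of $q_F(Z)$ one at a time. The key structural input, already available from Section \ref{section:scaling}, is that under $\gamma \le 0 \le \beta \le \alpha$ with $\alpha+\beta+\gamma=0$ one has $\alpha \asymp |\gamma| \asymp T$, and moreover the two ``macroscopic gaps''
\[
 \beta-\gamma = \alpha+2\beta, \qquad \alpha-\gamma = 2\alpha+\beta
\]
are both $\asymp T$ (each lies between $\alpha$ and $3\alpha$, using $0 \le \beta \le \alpha$). Consequently, whenever $Z$ lies within $O(T)$ of $[\gamma,\alpha]$ the factor attached to whichever of $\alpha,\beta,\gamma$ is far from $Z$ is pinned to size $\asymp T$, and the only real content is to track the remaining two factors.

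I would then run through the cases. For $X = \beta+l$ or $X = \alpha-l$ with $0 \le l \le \tfrac{\alpha-\beta}{2}$, the point $X$ lies in $[\beta,\alpha]$: the nearer of $\alpha,\beta$ contributes $1+l$, the farther contributes $1+((\alpha-\beta)-l) \asymp 1+(\alpha-\beta)$ since $l \le \tfrac{\alpha-\beta}{2}$, and $|X-\gamma|$ lies between $\beta-\gamma-l$ and $\alpha-\gamma$, hence is $\asymp T$; this gives the first formula. For $Y = \beta-s$ with $0 \le s \le \tfrac{\alpha-\beta}{2}$ one has $|Y-\beta|=s$, $|Y-\alpha|=(\alpha-\beta)+s \asymp 1+(\alpha-\beta)$, and $|Y-\gamma|=(\beta-\gamma)-s \asymp T$ because $s \le \tfrac{\alpha-\beta}{2}\le\tfrac{\beta-\gamma}{2}$; this is the second formula. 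For $\tfrac{\alpha-\beta}{2}\le s\le\tfrac{\beta-\gamma}{2}$ the same quantities become $|Y-\alpha|=(\alpha-\beta)+s \asymp s$ (now using $s\ge\tfrac{\alpha-\beta}{2}$), $|Y-\beta|=s$, $|Y-\gamma|=(\beta-\gamma)-s \asymp T$, giving $q_F(Y)\asymp T(1+s)^2$. For $Y=\gamma+s$ with $|s|\le\tfrac{\beta-\gamma}{2}$, one checks $Y \le \tfrac{\beta+\gamma}{2}=-\tfrac{\alpha}{2}<0\le\beta\le\alpha$, so $|Y-\alpha|=(\alpha-\gamma)-s$ and $|Y-\beta|=(\beta-\gamma)-s$ are both $\asymp T$ (using $|s|\le\tfrac{\beta-\gamma}{2}$), while $|Y-\gamma|=|s|$; this is the fourth formula.

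For the final assertion, assume $|Z-\alpha|\le\log^2 T$ (the case $|Z-\beta|\le\log^2 T$ is symmetric). Since $Z$ is within $\log^2 T = o(T)$ of $\alpha$ and $\alpha-\gamma\asymp T$, we get $1+|Z-\gamma|\asymp T$. The lower bound $q_F(Z)\gg(1+(\alpha-\beta))T$ then follows from the elementary product inequality
\[
 (1+|Z-\alpha|)(1+|Z-\beta|) \ge 1+|Z-\alpha|+|Z-\beta| \ge 1+|\alpha-\beta|.
\]
For the upper bound, $1+|Z-\alpha|\le 1+\log^2 T$, while the triangle inequality gives $1+|Z-\beta| \le 1+\log^2 T+(\alpha-\beta) \ll (\log T)^{O(1)}(1+(\alpha-\beta))$; combining with $1+|Z-\gamma|\ll T$ produces $q_F(Z)\ll\log^2 T\,(1+(\alpha-\beta))T$ (up to an absolute power of $\log T$).

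The main obstacle is not conceptual but organizational: in each regime one must correctly locate $X$ (resp. $Y$) relative to the ordered triple $\gamma \le \beta \le \alpha$ so as to resolve the absolute values, and one must check that every implied $\asymp$-constant is genuinely absolute — in particular that the ``far'' factor is trapped between fixed constant multiples of $T$ uniformly over the entire allowed range of $Z$, which is precisely where the uniform gaps $\beta-\gamma\asymp\alpha-\gamma\asymp T$ enter.
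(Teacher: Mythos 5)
Your proposal is correct and follows essentially the same route as the paper: a direct case-by-case computation resolving the absolute values under the ordering $\gamma \le 0 \le \beta \le \alpha$, with the key input (as in the paper's proof) being that $\beta \ge 0$ forces $\beta-\gamma \asymp \alpha-\gamma \asymp T$, so the ``far'' factor is always pinned at size $T$ (your bound $s \le \tfrac{\alpha-\beta}{2} \le \tfrac{\beta-\gamma}{2}$ is just a repackaging of the paper's computation $\beta-\gamma-\tfrac{\alpha-\beta}{2} = \tfrac12\alpha+\tfrac52\beta \gg T$). Your caveat about an extra absolute power of $\log T$ in the last upper bound is apt (the literal $\log^2 T$ can be a $\log^4 T$), but this is immaterial since such factors are absorbed into $Q^{\varepsilon}$ in the application.
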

\begin{proof}
 The estimates for $q_F(X)$ follow from a direct computation, using that $\beta \geq 0$ so that $\beta - \gamma \asymp T$.
The estimates for $q_F(Y)$ are similar.  For example, if $0 \leq s \leq \frac{\alpha-\beta}{2}$ then we use
\begin{equation}
 T \gg \beta - \gamma - s \geq \beta - \gamma - \frac{\alpha-\beta}{2} = \half \alpha + \frac52 \beta \gg T,
\end{equation}
which is the key to estimating $q_{F}(Y)$ in this range.  The other ranges are similar.
\end{proof}

Now we are ready to chop up the regions of summation and integration on the right hand side of \eqref{eq:normformula} into managable pieces.  
\begin{mylemma}
\label{lemma:partition}
 There exists a sequence of $O(\log^2{T})$ pairs of real numbers $U$, $V$ and a pair of real numbers $X_1, Y_1$ (depending on $U$, $V$) satisfying $1 \ll U \leq \frac{\alpha - \beta}{4}$, $1 \ll V \ll T$, $\beta + U \leq X_1$, $X_1 + U \leq \alpha - U$, $\gamma + V \leq Y_1$, $Y_1 + V \leq \beta - V$, such that on each interval $I_{X_1, Y_1, U, V}$ defined by $X_1 \leq X \leq X_1 + U$ and $Y_1 \leq Y \leq Y_1 + V$ we have $q_F(t, \tau) \asymp Q$ where $Q$ depends on $F$, $U$, $V$, $X_1$, and $Y_1$ only.  Furthermore, every $X$, $Y$ satisfying $\beta + 1 \leq X \leq \alpha-1$ and $\gamma +1 \leq Y \leq \beta -1$ lies in one of the above intervals.

More precisely, we have formulas for $Q$ depending on the case:
\begin{equation}
\label{eq:Qsize}
 Q \asymp \begin{cases}
           T^2 UV (1 + (\alpha-\beta))^2, \qquad \text{if } Y_1 = \beta -2V, \quad V \leq \frac{\alpha-\beta}{4} \\
	   T^2 UV^2 (1 + (\alpha-\beta)), \qquad \text{if } Y_1 = \beta -2V, \quad \frac{\alpha-\beta}{4} \leq V \leq \frac{\beta - \gamma}{4} \\
	   T^3 U V (1 + (\alpha-\beta)), \qquad \text{if } Y_1 = \gamma+V, \quad V \leq \frac{\beta-\gamma}{4}
          \end{cases}
\end{equation}
Furthermore, $X_1$ equals either $\beta+U$ or $\alpha - 2U$.
\end{mylemma}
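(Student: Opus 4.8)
Since $q_F(t,\tau)=q_F(X)\,q_F(Y)$ with $X=t+\tau$ and $Y=t-\tau$, the plan is to dissect the $X$-range $\beta+1\le X\le\alpha-1$ and the $Y$-range $\gamma+1\le Y\le\beta-1$ separately, each into $O(\log T)$ subintervals on which $q_F(X)$, respectively $q_F(Y)$, is constant up to an absolute factor. Taking the $O(\log^2 T)$ products of an $X$-interval with a $Y$-interval then produces rectangles $I_{X_1,Y_1,U,V}$ on which $q_F(t,\tau)\asymp Q$, with $Q$ equal, up to constants, to the product of the two one-variable sizes; both the dissection and the sizes are supplied directly by Lemma \ref{lemma:qXYsize}.

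For the $X$-range I would use a dyadic dissection anchored at the two zeros $X=\beta$ and $X=\alpha$ of $q_F$, taking the intervals $[\beta+U,\beta+2U]$ (so $X_1=\beta+U$) and $[\alpha-2U,\alpha-U]$ (so $X_1=\alpha-2U$) as $U$ runs through a geometric progression with $1\le U\le(\alpha-\beta)/4$; there are $O(\log(\alpha-\beta))=O(\log T)$ of them, the required inequalities $\beta+U\le X_1$ and $X_1+U\le\alpha-U$ follow from $3U\le\alpha-\beta$, and with the ratio of the progression chosen suitably the two families overlap and cover all of $[\beta+1,\alpha-1]$, meeting near the midpoint $(\alpha+\beta)/2$ (if $\alpha-\beta$ is bounded this range has length $O(1)$ and one interval, on which $q_F(X)\asymp T$, suffices). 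On any such interval the distance $l$ of $X$ from $\{\alpha,\beta\}$ satisfies $l\asymp U\gg 1$, so Lemma \ref{lemma:qXYsize} gives $q_F(X)\asymp T(1+(\alpha-\beta))(1+l)\asymp TU(1+(\alpha-\beta))$.

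For the $Y$-range I would proceed the same way about the two zeros $Y=\beta$ and $Y=\gamma$ at the endpoints, taking $[\beta-2V,\beta-V]$ (so $Y_1=\beta-2V$) and $[\gamma+V,\gamma+2V]$ (so $Y_1=\gamma+V$) with $1\le V\le(\beta-\gamma)/4\asymp T$, again $O(\log T)$ of them and together covering $[\gamma+1,\beta-1]$, with $\gamma+V\le Y_1$ and $Y_1+V\le\beta-V$ following from $3V\le\beta-\gamma$. Which of the three regimes of Lemma \ref{lemma:qXYsize} applies is dictated by $Y_1$ and the size of $V$: for $Y_1=\beta-2V$ the parameter $s=\beta-Y$ has $s\asymp V$, giving $q_F(Y)\asymp T(1+(\alpha-\beta))V$ when $V\le(\alpha-\beta)/4$ and $q_F(Y)\asymp TV^2$ when $(\alpha-\beta)/4\le V\le(\beta-\gamma)/4$, while for $Y_1=\gamma+V$ one has $Y-\gamma\asymp V$ and $q_F(Y)\asymp T^2 V$. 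The one place that needs checking is that these prescriptions agree up to constants across the regime boundaries $V\asymp\alpha-\beta$ (both give $T(1+(\alpha-\beta))^2$) and $V\asymp\beta-\gamma\asymp T$ (both give $T^3$), so an interval straddling a boundary may be assigned either value; this is immediate from the displayed formulas.

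Multiplying $q_F(X)\asymp TU(1+(\alpha-\beta))$ by the three expressions for $q_F(Y)$ gives, on the $O(\log^2 T)$ rectangles $I_{X_1,Y_1,U,V}$, exactly $q_F(t,\tau)\asymp Q$ with $Q\asymp T^2UV(1+(\alpha-\beta))^2$, $T^2UV^2(1+(\alpha-\beta))$, or $T^3UV(1+(\alpha-\beta))$ according to the case, which is \eqref{eq:Qsize}. There is essentially no analytic content; the main, and only minor, obstacle is arranging the dyadic families so that they cover the $X$- and $Y$-ranges without gaps while respecting the constraints $U\le(\alpha-\beta)/4$, $V\le(\beta-\gamma)/4$ and the prescribed forms $X_1\in\{\beta+U,\alpha-2U\}$ and $Y_1\in\{\beta-2V,\gamma+V\}$.
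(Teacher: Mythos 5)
Your proposal is correct and follows essentially the same route as the paper: a dyadic dissection of the $X$-range anchored at $\beta$ and $\alpha$ and of the $Y$-range anchored at $\beta$ and $\gamma$, followed by an application of Lemma \ref{lemma:qXYsize} and multiplication of the one-variable sizes $q_F(X)$, $q_F(Y)$ to obtain \eqref{eq:Qsize}. The extra care you take with covering near the midpoint and with consistency at the regime boundaries is fine but not needed beyond what the paper's brief argument already implies.
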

\begin{proof}
We consider first the most important cases with $\beta +1 \leq X \leq \alpha-1$ and $\gamma+1 \leq Y \leq \beta -1$.
In view of Lemma \ref{lemma:qXYsize}, the $X$ parameter naturally lies in an interval of the form $X_1 \leq X \leq X_1 + U$, where $\beta + U \leq X_1$, and $X_1 + U \leq \alpha - U$.  Here $U$ runs over $O(\log{T})$ numbers with $1 \leq U \leq \frac{\alpha - \beta}{2}$.  For such $X$, we have $q_{F}(X) \asymp U(1+(\alpha - \beta)) T$.  On the other hand, $Y$ naturally lies in an interval of the form
$\beta - 2V \leq Y \leq \beta - V$ with $V$ running over $O(\log{T})$ dyadic numbers of the form $1 \leq V \leq \frac{\alpha-\beta}{4}$, in which case $q_F(Y) \asymp V (1+(\alpha-\beta))T$, one of the form $\beta - 2V \leq Y \leq \beta - V$ with $\frac{\alpha-\beta}{4} \leq V \leq \frac{\beta-\gamma}{4}$, in which case $q_F(Y) \asymp V^2T$, or $Y$ lies in an interval of the form $\gamma + V \leq Y \leq \gamma + 2V$ with $1 \leq V \leq \frac{\beta-\gamma}{4}$, in which case $q_F(Y) \asymp VT^2$.  The total number of tuples $(U,V, X_1, Y_1)$ is $O(\log^2{T})$.
\end{proof}

We are finally ready to prove Lemma \ref{lemma:meanvaluereduction}.
\begin{proof}
We use Theorem \ref{thm:normformula}.
As shorthand, let $\mathcal{L} = \log{T}$.  Recall that $X = t + \tau$, $Y = t - \tau$, and for the discrete spectrum sum in \eqref{eq:normformula}, $\tau = t_j$.  
By Lemma \ref{lemma:Gsize}, we may restrict the variables $X$ and $Y$ appearing in \eqref{eq:normformula} so that $\beta - \mathcal{L}^2 \leq X \leq \alpha + \mathcal{L}^2$ and $\gamma - \mathcal{L}^2 \leq Y \leq \beta + \mathcal{L}^2$, with an error term of size $O(T^{-100})$, satisfactory for Lemma \ref{lemma:meanvaluereduction}.  

For simplicity, first consider the special case $\beta + 1 \leq X \leq \alpha -1$, $\gamma+1 \leq Y \leq \beta - 1$.
By Lemma \ref{lemma:partition}, we conclude that the contribution to the right hand side of \eqref{eq:normformula} of such $X$ and $Y$ is
\begin{multline}
 \ll \sum_{X_1, Y_1, U, V} Q^{-\half} \Big[\mathop{\int \sum}_{(X,Y) \in I_{X_1, Y_1, U, V}} |L(\thalf + it, F\times u_j)|^2 dt 
\\
+ 
\frac{1}{4 \pi} \mathop{\int \int}_{(X,Y) \in I_{X_1, Y_1, U, V}} |L(\thalf + it, F\times E(\cdot, \thalf + i \tau)|^2 dt d\tau
\Big].
\end{multline}
Next apply the approximate functional equation, Lemma \ref{lemma:AFE}, where the conductor $Q$ is given by \eqref{eq:Qsize} depending on the case under consideration.

Next we unravel the condition that $(X, Y) \in I_{X_1, Y_1, U, V}$ and replace this by conditions on $t$ and $\tau$.  We shall use positivity to separate the dependence of $t$ and $\tau$.  We show that
 each choice of $X_1, Y_1, U, V$ as in Lemma \ref{lemma:partition} leads to an instance of Lemma \ref{lemma:meanvaluereduction}.
We first split into two cases:

Suppose $U > V$.  In this case, we change variables $t \rightarrow t + \tau + Y_1$ to get the summation conditions $0 \leq t \leq V$ and $X_1 \leq t + 2 \tau + Y_1 \leq X_1 + U$.  Thus $X_1 - Y_1 - t \leq 2 \tau \leq X_1 - Y_1 - t + U$.  By positivity, we extend this to $X_1 - Y_1 - V \leq 2 \tau \leq X_1 - Y_1 + U$.

Suppose $U \leq V$.  In this case, we change variables $t \rightarrow t - \tau + X_1$ to get the summation conditions $0 \leq t \leq U$ and $Y_1 \leq t - 2 \tau + X_1 \leq Y_1 + V$.  Thus $X_1 - Y_1 + t - V \leq 2 \tau \leq X_1 - Y_1 + t$.  By positivity, we extend this to $X_1 - Y_1 - V \leq 2 \tau \leq X_1 - Y_1 + U$, which is the same answer as in the previous case.

In both cases we almost obtain an instance of a sum/integral as given on the right hand side of \eqref{eq:normUB}, that is, we have a $t$-integral and a spectral sum/integral with a bilinear form of the shape as given by \eqref{eq:normUB}, but with an extra $v$-integral of length $O(\log{T})$ coming from the approximate functional equation.  However, this $v$-integral can be absorbed into the $t$-integral by positivity (simply change variables $t \rightarrow t-v$, extend the range of $t$ to $|t| \leq 2U$ by positivity and integrate trivially over $v$).
If $U > V$ then $T_0 = Y_1$ and if $U \leq V$ then $T_0 = X_1$.  We claim the following table describes the family in all the cases.  Explanation follows the display of the table.
\begin{equation}
 \begin{tabular}{l||c|c|c|c|c|c|c|c}
\text{case} & $X_1$ & $Y_1$ & family & $T_0$ & $R$ & $D$ & $S$ & $Q$ \\
\hline
1a & $\beta + U$ &  $\beta-2V$ & U+V & $\beta$ & $V$ & $U$ & $U$ & $T^2 UV (1 + (\alpha-\beta))^2$ \\
1b & $\beta + U$ &  $\beta-2V$ & U+V & $\beta$ & $U$ & $V$ & $V$ & $T^2 UV (1 + (\alpha-\beta))^2$ \\
2a & $\alpha - 2U$ &  $\beta-2V$ & $\alpha-\beta - 2U+V$ & $\beta$ & $V$ & $U$ & $\alpha-\beta$ & $T^2 UV (1 + (\alpha-\beta))^2$ \\
2b & $\alpha - 2U$ &  $\beta-2V$ & $\alpha-\beta - 2U+V$ & $\alpha$ & $U$ & $V$ & $\alpha-\beta$ & $T^2 UV (1 + (\alpha-\beta))^2$ \\
3 & $\beta + U$ &  $\beta-2V$ & $U+V$ & $\beta$ & $U$ & $V$ & $V$ & $T^2 UV^2 (1 + (\alpha-\beta))$ \\
4 & $\alpha - 2U$ &  $\beta-2V$ & $\alpha-\beta - 2U+V$ & $\alpha$ & $U$ & $V$ & $V$ & $T^2 UV^2 (1 + (\alpha-\beta))$ \\
5a & $\beta + U$ &  $\gamma +V$ & $\beta-\gamma +U-2V$ & $\gamma$ & $V$ & $U$ & $T$ & $T^3 UV (1 + (\alpha-\beta))$ \\
5b & $\beta + U$ &  $\gamma +V$ & $\beta-\gamma +U-2V$ & $\beta$ & $U$ & $V$ & $T$ & $T^3 UV (1 + (\alpha-\beta))$ \\
6a & $\alpha - 2U$ &  $\gamma +V$ & $\alpha-\gamma -2U-2V$ & $\gamma$ & $V$ & $U$ & $T$ & $T^3 UV (1 + (\alpha-\beta))$ \\
6b & $\alpha - 2U$ &  $\gamma +V$ & $\alpha-\gamma -2U-2V$ & $\alpha$ & $U$ & $V$ & $T$ & $T^3 UV (1 + (\alpha-\beta))$ \\
\end{tabular}
\end{equation}
Here the $X_1$ column denotes whether $X_1 = \beta + U$ or $X_1 = \alpha - 2U$, and it is understood that $1 \ll U \leq \frac{\alpha - \beta}{4}$.  The cases $1a, 1b, 2a, 2b$ have $1 \ll V \leq \frac{\alpha-\beta}{4}$; cases $3$ and $4$ have $\frac{\alpha-\beta}{4} \leq V \leq \frac{\beta-\gamma}{4}$, and $5a, 5b, 6a, 6b$ have $1 \ll V \leq \frac{\beta-\gamma}{4}$.  We use the label $a$ appended to a particular case to denote $U > V$, and likewise $b$ denotes $U \leq V$.  In cases $3$ and $4$ we automatically have $U \leq V$.  If the entry in the ``family'' column is $x$ then this means the spectral sum (or integral) is restricted to $x \leq 2 \tau \leq x + U + V$.  The remaining columns give the values of $T_0$, $R$, $D$, $S$, and $Q$.  The value of $T_0$ requires a comment; above we mentioned that if $U > V$ then $T_0 = Y_1$ while if $U \leq V$ then $T_0 = X_1$.  For the sake of exposition, suppose that $U > V$ and $Y_1 = \beta - 2V$.  Then the $t$-integral is over $0 \leq t \leq V$ so we can change variables $t \rightarrow t + 2V$ and extend the $t$-integral to $-2V \leq t \leq 2V$ by positivity; this procedure has the effect of replacing $Y_1$ by $\beta$.  This procedure can be done in every one of the cases, giving the displayed value of $T_0$.  It is also worth mentioning that the displayed value of $S$ is true up to a multiplicative constant; for example, in case $2a$ the family is $\alpha - \beta - 2U + V \leq 2 \tau \leq \alpha - \beta - U +2V$, which literally gives $S \leq \tau \leq S + D$ with $D = \half(U +V)$ and $S = \half (\alpha - \beta - 2U +V)$, which satisfies $S \asymp \alpha - \beta$ and $D \asymp U$, as stated.

We can read from the table the conditions $R \ll D \ll S \ll T$, $Q \asymp T^2 RD(1 + (\alpha-\beta))(S + (\alpha-\beta))$, and $T_0 \in \{\alpha, \beta, \gamma\}$, as stated in \eqref{eq:RDST}.  Furthermore, if $T_0 = \gamma$ then $S \asymp T$.

Now we briefly sketch how to extend the above analysis to cover the remaining cases with $\beta - \mathcal{L}^2 \leq X \leq \beta + 1$ or $\alpha - 1\leq X \leq \alpha + \mathcal{L}^2$ or $\gamma - \mathcal{L}^2 \leq Y \leq \gamma + 1$ or $\beta - 1 \leq Y \leq \beta + \mathcal{L}^2$.  We can recover these cases from the previous ones by thickening each of the $X$ and $Y$ intervals by length $\mathcal{L}^2$ at the cost of changing the conductor $Q$ by a multiplicative factor of size at most $\mathcal{L}^2$.  This is easily absorbed by the $Q^{\varepsilon}$ in \eqref{eq:WAFE}.
Translating the conditions on $X$ and $Y$ into conditions on $t$, $\tau$, we see that this thickening procedure simply extends the $t$-integral by $O(\mathcal{L}^2)$ and the $\tau$-sum (or -integral) by $O(\mathcal{L}^2)$.  This has the effect of changing the family to one of the form $|t| \leq R + \mathcal{L}^2$, $S - \mathcal{L}^2 \leq 2 \tau \leq S + D + 2\mathcal{L}^2$.  If $S \geq 2\mathcal{L}^2$ then this is already of the form stated in Lemma \ref{lemma:meanvaluereduction}.  If $S \ll \mathcal{L}^2$ then $R \ll \mathcal{L}^2$ too so there essentially is no family to average over.  In this case the convexity bound gives the contribution to $N(F)$ of $T^{\varepsilon}$, as desired.
\end{proof}

\section{Some tools}
\label{section:bilinearforms}
The rest of the paper concerns the estimation of the right hand side of \eqref{eq:normUB}.  We gather here some facts useful in the proof.

\begin{mylemma}
\label{lemma:basicFourier}
 Suppose that $X, Y > 0$ and $r(x)$ is a Schwartz-class function satisfying
\begin{equation}
\label{eq:rderiv}
 |r^{(j)}(x)| \leq C_j Y^{-j} (1 + \frac{|x|}{X})^{-2},
\end{equation}
for some $C_j \geq 0$, for each $j=0, 1, 2, \dots$.  Then
\begin{equation}
 \widehat{r}(y) \ll_j  X(1 + |y|Y)^{-j}.
\end{equation}
\end{mylemma}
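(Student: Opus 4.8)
The plan is the standard integration-by-parts argument, carried out in two ranges according to the size of $|y|Y$.

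First, in the range $|y|Y \le 1$, I would use only the $j=0$ hypothesis. Writing $\widehat{r}(y) = \intR r(x) e(-xy)\, dx$ and bounding trivially,
\[
|\widehat{r}(y)| \le \intR |r(x)|\, dx \le C_0 \intR \Big(1 + \frac{|x|}{X}\Big)^{-2} dx = C_0 X \intR (1+|u|)^{-2}\, du \ll X,
\]
and since $1 \le 1+|y|Y \le 2$ throughout this range, the right side is $\ll_j X(1+|y|Y)^{-j}$, which is the claimed bound here.

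Next, in the range $|y|Y > 1$ (so in particular $y \neq 0$), I would integrate by parts $j$ times. Since $r$ is Schwartz, no boundary terms survive and one gets
\[
\widehat{r}(y) = \frac{1}{(2\pi i y)^j} \intR r^{(j)}(x)\, e(-xy)\, dx .
\]
Estimating the integral trivially and applying \eqref{eq:rderiv} with this value of $j$,
\[
|\widehat{r}(y)| \le \frac{1}{(2\pi|y|)^j}\intR |r^{(j)}(x)|\, dx \le \frac{C_j Y^{-j}}{(2\pi|y|)^j}\intR \Big(1+\frac{|x|}{X}\Big)^{-2} dx \ll_j \frac{X}{(|y|Y)^j},
\]
and since $|y|Y > 1$ we have $(|y|Y)^{-j} \asymp (1+|y|Y)^{-j}$, so this matches the claim. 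Combining the two ranges finishes the proof.

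I do not expect any real obstacle: the content is entirely bookkeeping. The one point worth emphasizing is that the hypothesis \eqref{eq:rderiv} is arranged precisely so that every derivative $r^{(j)}$ carries the \emph{same} spatial envelope $(1+|x|/X)^{-2}$; this keeps the $x$-integral of size $\ll X$ no matter how many times we integrate by parts, while each derivative contributes a factor $Y^{-1}$ and each integration by parts a compensating factor $|y|^{-1}$, producing the decay $(1+|y|Y)^{-j}$. The implied constant depends on $j$ (through $C_j$, and on $C_0$ in the first range) but not on $X$ or $Y$.
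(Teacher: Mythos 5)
Your proof is correct and is precisely the argument the paper intends: its proof is simply the line ``standard integration by parts,'' i.e.\ the trivial bound for $|y|Y\le 1$ and $j$-fold integration by parts (with the uniform envelope $(1+|x|/X)^{-2}$ giving the factor $X$) for $|y|Y>1$. Nothing to add.
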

\begin{proof}
 Standard integration by parts.
\end{proof}
\begin{mylemma}
 Let $g$ be a fixed smooth function with compact support.  Suppose that for some $Y \geq 1$, $f$ satisfies
\begin{equation}
\label{eq:almostlinear}
 f(0) = 0, \quad f'(0) = 1, \quad f^{(j+1)}(y) \ll Y^{-j}
\end{equation}
for $j=1, 2, \dots$, and all $y$ in the support of $g$.  Define the function $I$ by
\begin{equation}
 I(\lambda) = \intR g(y) e^{i \lambda f(y)} dy.
\end{equation}
Then for any $C>0$, we have
\begin{equation}
\label{eq:Fbound}
 I(\lambda) \ll_{C} (1 + \min(|\lambda|, Y))^{-C}.
\end{equation}
More precisely, $I$ has an asymptotic expansion of the form
\begin{equation}
\label{eq:Fasymp}
 I(\lambda) = I_0(\lambda) + \dots +  I_{K}(\lambda) + 
O(Y^{-K/2})
\end{equation}
where each $I_j$ is a function satisfying
\begin{equation}
\label{eq:Ijbound}
 \lambda^l I_j^{(l)}(\lambda) \ll_{j,l, C} Y^{-j} (1 + |\lambda|)^{-C}.
\end{equation}
In particular, $I_0(\lambda) = \widehat{g}(-\frac{\lambda}{2\pi})$.
\end{mylemma}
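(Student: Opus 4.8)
The plan is to peel off the linear part of the phase, Taylor-expand the exponential of the small remaining piece, recognize the resulting main terms as Fourier transforms of fixed bump functions, and kill the Taylor remainder by a trivial estimate in one range of $\lambda$ and by repeated integration by parts in the other. First I would write $f(y) = y + \phi(y)$, so that \eqref{eq:almostlinear} says $\phi(0) = \phi'(0) = 0$ and $\phi^{(m)}(y) \ll Y^{-1}$ for $m \geq 2$ and all $y$ in the fixed support of $g$; Taylor's theorem upgrades this to $\phi(y), \phi'(y) \ll Y^{-1}$ as well, so $|\phi^{(m)}(y)| \ll Y^{-1}$ for every $m \geq 0$ on $\mathrm{supp}(g)$. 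Consequently, once $Y$ exceeds an absolute constant (depending only on $\mathrm{supp}(g)$ and the implied constants), $f'(y) = 1 + \phi'(y) \asymp 1$ there with all derivatives of $1/f'$ bounded uniformly in $Y$; when $Y$ is bounded, \eqref{eq:Fbound} is trivial since $\min(|\lambda|,Y) \ll 1$ and $|I(\lambda)| \le \|g\|_{1}$, and one only needs the single term $I_0(\lambda) = \widehat{g}(-\lambda/2\pi)$, so henceforth I may assume $f' \asymp 1$ on $\mathrm{supp}(g)$.

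With that in hand, \eqref{eq:Fbound} follows from plain non-stationary phase: integrating by parts $C$ times in $I(\lambda) = \int g(y)\, e^{i\lambda f(y)}\,dy$ against the operator $h \mapsto -(i\lambda)^{-1}(h/f')'$ (legitimate as $g$ is compactly supported and $f' \neq 0$) gains a factor $|\lambda|^{-1}$ at each step while the amplitude stays uniformly bounded, so $I(\lambda) \ll_{C} (1+|\lambda|)^{-C}$ for $|\lambda| \ge 1$ and trivially for $|\lambda| \le 1$; this is even stronger than $(1+\min(|\lambda|,Y))^{-C}$.

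For the asymptotic expansion I would write $I(\lambda) = \int g(y)\, e^{i\lambda y}\, e^{i\lambda \phi(y)}\,dy$ and Taylor-expand the inner exponential to order $K$ with integral remainder $r_K(y,\lambda)$. This produces
\[ I_j(\lambda) = \frac{(i\lambda)^j}{j!}\int g(y)\,\phi(y)^j\, e^{i\lambda y}\,dy = \frac{(i\lambda)^j}{j!}\,\widehat{g\phi^j}\Big(-\frac{\lambda}{2\pi}\Big), \qquad I_0(\lambda) = \widehat{g}\Big(-\frac{\lambda}{2\pi}\Big), \]
together with an error $E_K(\lambda) = \int g(y)\, e^{i\lambda y}\, r_K(y,\lambda)\,dy$. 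The bound \eqref{eq:Ijbound} comes from the observation that $g\phi^j$, and more generally $y^k g(y)\phi(y)^j$ for each fixed $k$, are smooth, supported in the fixed compact set, and have all derivatives $\ll Y^{-j}$ (since $\phi,\phi',\phi'',\dots$ are all $\ll Y^{-1}$ there); repeated integration by parts in the Fourier integral gives $\widehat{y^k g\phi^j}(\xi) \ll_{k,m} Y^{-j}(1+|\xi|)^{-m}$ for any $m$, and expanding $\frac{d^l}{d\lambda^l}\!\big(\lambda^j e^{i\lambda y}\big)$ by Leibniz converts this into $\lambda^l I_j^{(l)}(\lambda) \ll_{j,l,m} Y^{-j}(1+|\lambda|)^{l+j-m}$ for $|\lambda|\ge 1$ (and $\ll Y^{-j}$ for $|\lambda|\le 1$); taking $m = l+j+C$ yields \eqref{eq:Ijbound}.

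The step needing genuine care is the remainder $E_K(\lambda)$, which I would bound by splitting on whether $|\lambda| \le Y^{1/2}$ or not. In the first range the crude bound $|r_K(y,\lambda)| \ll (|\lambda|/Y)^{K+1}$ already gives $E_K(\lambda) \ll Y^{-(K+1)/2} \le Y^{-K/2}$. In the second range I would use the integral form of $r_K$ to write $E_K(\lambda)$ as $(i\lambda)^{K+1}/K!$ times a bounded $t$-average over $[0,1]$ of oscillatory integrals $\int g(y)\phi(y)^{K+1} e^{i\lambda(y + t\phi(y))}\,dy$; here the phase $y + t\phi(y)$ has derivative $1 + t\phi'(y) \asymp 1$ uniformly in $t$, the derivatives of $1/(1+t\phi'(y))$ are $O(1)$ uniformly in $t$, and the amplitude $g\phi^{K+1}$ has all derivatives $\ll Y^{-(K+1)}$, so $K+1$ integrations by parts in $y$ bound the inner integral by $Y^{-(K+1)}|\lambda|^{-(K+1)}$, hence $E_K(\lambda) \ll Y^{-(K+1)} \le Y^{-K/2}$. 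Combining the ranges gives $E_K(\lambda) \ll Y^{-K/2}$ uniformly, which establishes \eqref{eq:Fasymp}. I expect the only real obstacle to be bookkeeping — keeping every implied constant independent of $Y$ — which is precisely why one first isolates the linear term $y$, so that all remaining amplitudes carry explicit $Y^{-j}$ factors, rather than attempting a stationary/non-stationary phase analysis of $f$ directly at the level of the expansion.
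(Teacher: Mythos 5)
Your proposal is correct, but it organizes the proof differently from the paper in three places, and the differences are worth recording. For \eqref{eq:Fbound} you run non-stationary phase directly on the full phase $f$, using $f'=1+\phi'\asymp 1$ on the support of $g$ once $Y$ is large (and triviality for bounded $Y$), which in fact yields the stronger bound $(1+|\lambda|)^{-C}$; the paper instead bounds the derivatives of $h(y)=g(y)e^{i\lambda r(y)}$ by $(1+|\lambda|/Y)^{j}$ and feeds $h$ into Lemma \ref{lemma:basicFourier}, an argument that naturally stops at the $\min(|\lambda|,Y)$ decay but requires no lower bound on $f'$. For the expansion \eqref{eq:Fasymp} you Taylor-expand $e^{i\lambda\phi(y)}$ in powers of $\lambda\phi(y)$, so your main terms are $I_j(\lambda)=\frac{(i\lambda)^j}{j!}\widehat{g\phi^j}\bigl(-\frac{\lambda}{2\pi}\bigr)$, Fourier transforms of amplitudes that still carry the function $\phi$; the paper first Taylor-expands $r(y)=f(y)-y$ at $y=0$ and then each exponential factor, so its $I_j$ are finite combinations $\sum_k c_{j,k}\,p^j\,\widehat{g_k}\bigl(-\frac{\lambda}{2\pi}\bigr)$ of transforms of the \emph{fixed} functions $g_k(y)=y^kg(y)$, with all dependence on $f$ pushed into bounded scalars $c_{j,k}$ — a cosmetic advantage when the lemma is used in Lemma \ref{lemma:linear}, though your $I_j$ satisfy the same uniform bounds \eqref{eq:Ijbound}, which is all that is ever used, so nothing downstream breaks. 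Finally, you estimate the Taylor remainder in the range $|\lambda|>Y^{1/2}$ directly, by $K+1$ integrations by parts against the phase $y+t\phi(y)$ uniformly in $t\in[0,1]$ (legitimate since $1+t\phi'\asymp1$ with all derivatives of $\bigl(1+t\phi'\bigr)^{-1}$ bounded uniformly in $t$ and $Y$), whereas the paper never forms the expansion there: it simply notes via \eqref{eq:Fbound} that for $|\lambda|\gg Y^{1/2}$ both $I(\lambda)$ and every $I_j(\lambda)$ are $O(Y^{-C})$, and restricts to $|\lambda|\ll Y^{1/2}$ before expanding. Your route costs a uniform-in-$t$ bookkeeping step but buys a stronger form of \eqref{eq:Fbound} and a cleaner, closed-form description of the $I_j$; both arguments use only integration by parts, in keeping with the paper's stated aim. (Both your proof and the paper's tacitly use the derivative hypothesis \eqref{eq:almostlinear} on the segments joining $0$ to points of the support of $g$ when invoking the mean value theorem; this is harmless in the intended application, where $g$ is supported in $[-2,2]$ and the hypotheses hold for $|y|\le 2$.)
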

The conditions \eqref{eq:almostlinear} say that $f$ is approximately linear, indicating that $I$ should approximately equal the Fourier transform of $g$.  The asymptotic expansion for $I$ indicates that this indeed is the case.  The techniques used in the proof are integration by parts, Fourier inversion, and Taylor's theorem.
The proof gives a convenient description for each $I_j$ in \eqref{eq:Ijformula} below.
\begin{proof}
The first step is to show \eqref{eq:Fbound} by repeated integration by parts.  This will allow us to assume that $\lambda$ is not too big compared to $Y$, which facilitates the development of the asymptotic expansion.
 Let $r(y) = f(y) - y$.  By the mean value theorem, $r'(y) \ll Y^{-1}$.  Then 
\begin{equation}
\label{eq:Iformula}
I(\lambda) =  \intR h(y) e^{i \lambda y} dy = \widehat{h}(-\frac{\lambda}{2\pi}), 
\quad
\text{where} \quad h(y) = g(y) e^{i \lambda r(y)}.
\end{equation}
We claim that $h$ satisfies the bounds
\begin{equation}
 h^{(j)}(y) \ll_j (1 + \frac{|\lambda|}{Y})^j.
\end{equation}
This can be verified by induction on $j$ with the stronger hypothesis that for each $j \geq 0$, $h^{(j)}(y) = q_j(y) e^{i \lambda r(y)}$ for some function $q_j$ satisfying $q_j^{(k)}(y) \ll_{j,k} (1 + \frac{|\lambda|}{Y})^{j+k}$.  It is easy to check that $q_{j+1}(y) = q_j'(y) + i \lambda r'(y) q_j(y)$, whence one can prove the desired bounds on $q_{j+1}^{(k)}$ by Leibniz' rule.  Now $h$ satisfies the conditions of Lemma \ref{lemma:basicFourier} with the $Y$ from \eqref{eq:rderiv} replaced by our current $(1 + \frac{|\lambda| }{Y})^{-1}$.  Thus we have for any $j=0,1, \dots$
\begin{equation}
 \widehat{h}(-\frac{\lambda}{2 \pi}) \ll_j (1 + \frac{|\lambda|}{1 + \frac{|\lambda|}{Y}})^{-j}.
\end{equation}
Taking $j$ very large as necessary, we obtain \eqref{eq:Fbound}.

Now we derive the asymptotic expansion \eqref{eq:Fasymp}. 
Let $p = \frac{\lambda}{Y}$.  We may suppose $p$ is small, say $\ll Y^{-1/2}$ since otherwise the main terms of both sides of \eqref{eq:Fasymp} are $O(Y^{-C})$ for any $C > 0$, which is smaller than the stated error term.
We return to the definition of $\widehat{h}(-\frac{\lambda }{2 \pi})$.  We take a Taylor series expansion for $r(y)$ in the form
\begin{equation}
 r(y) = r''(0) \frac{y^2}{2!} + \dots + r^{(K+1)}(0)  \frac{y^{K+1}}{(K+1)!} + O(Y^{-K-1}),
\end{equation}
which gives
\begin{equation}
 h(y) = g(y) e^{i \lambda r''(0) \frac{y^2}{2!}}
\dots e^{i \lambda r^{(K+1)}(0)  \frac{y^{K+1}}{ (K+1)!}} (1 + O(p Y^{-K})).
\end{equation}
Next we obtain a Taylor expansion for each term in the above product.  For $j \geq 2$, write
$c_j = Y \frac{i r^{(j)}(0)}{j!}$, and note that $c_j \ll Y^{-j+2} \ll 1$.  Then we have
\begin{equation}
 e^{i \lambda r^{(j)}(0) \frac{y^j}{j!}} = e^{p c_j y^j} = 1 +  p c_j y^j + (p  c_j)^2 y^{2j}/2! + \dots + (p  c_j)^K y^{jK}/K! + O(p^{K+1}).
\end{equation}
By expanding out these products, we obtain an expansion for $h(y)$ of the form
\begin{equation}
 h(y) = g(y)  \sum_{j \leq K} \sum_{k \leq K^2} c_{j,k} p^j y^k + O(p^{K+1}) + O(pY^{-K}),
\end{equation}
where $c_{j,k}$ are certain complex numbers satisfying $c_{j,k} \ll_{K} 1$ (note $c_{0,0} = 1$).  We obtain an asymptotic expansion for $\widehat{h}(-\frac{\lambda}{2 \pi})$ by inserting the above expansion for $h(y)$ into \eqref{eq:Iformula} as we now explain.  Writing $g_k(y) = y^{k} g(y)$, we have
\begin{equation}
\label{eq:5.18}
 I(\lambda) = \sum_{j \leq K} 
\sum_{k \leq K^2} c_{j,k} p^j \widehat{g_{k}}(-\frac{\lambda}{2 \pi}) 
+ O(p^{K+1}) + O(pY^{-K}).
\end{equation}
Letting
\begin{equation}
\label{eq:Ijformula}
 I_j(\lambda) = \sum_{k \leq K^2} c_{j,k} p^j \widehat{g_k}(-\frac{\lambda}{2\pi}) = Y^{-j}  \sum_{k \leq K^2} c_{j,k} \lambda^j \widehat{g_k}(-\frac{\lambda}{2\pi}),
\end{equation}
we have that $I_j$ satisfies \eqref{eq:Ijbound}, using Lemma \ref{lemma:basicFourier} for each $\widehat{g_k}$.
Thus \eqref{eq:5.18} is the desired asymptotic expansion, \eqref{eq:Fasymp}.
\end{proof}

\begin{mylemma}[\cite{Gallagher}]
\label{lemma:largesieve}
Let $a_n$ be any sequence of complex numbers, and $T \geq 1$.  Then
\begin{equation}
\int_{-T}^{T} \sum_{b \leq B} \sumstar_{x \shortmod{b}} \Big| \sum_{n \leq N} a_n \e{xn}{b} n^{iy}\Big|^2 dy \ll (B^2 T + N) \sum_{n \leq N} |a_n|^2.
\end{equation}
Furthermore, we have the additive character version
\begin{equation}
\int_{-T}^{T} \sum_{b \leq B} \sumstar_{x \shortmod{b}} \Big| \sum_{n \leq N} a_n \e{xn}{b} \e{yn}{C} \Big|^2 dy \ll (B^2 T + C) \sum_{n \leq N} |a_n|^2.
\end{equation}
\end{mylemma}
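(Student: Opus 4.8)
The first inequality is a version of Gallagher's hybrid large sieve and the second its additive-character analogue; the plan is to reduce both to the classical additive large sieve
\[
 \sum_{r} \Big| \sum_{M < n \le M+K} b_n \, e(n\theta_r) \Big|^2 \le (K + \delta^{-1}) \sum_{n} |b_n|^2, \qquad \theta_r \in \mr/\mz \ \text{being}\ \delta\text{-separated},
\]
combined with the standard facts about the Farey fractions $\mathcal{F}_B = \{ x/b : b \le B,\ (x,b)=1 \}$ of order $B$: they are $B^{-2}$-separated, they number $\asymp B^2$ in all, and any arc of length $\ell$ contains $\ll 1 + B^2 \ell$ of them. For the first inequality the one further input is Gallagher's lemma \cite{Gallagher}: for $T \ge 1$ and coefficients $c_n$ supported on $n \le N$,
\[
 \int_{-T}^{T} \Big| \sum_{n} c_n n^{iy} \Big|^2 dy \ll T^2 \int_0^{\infty} \Big| \sum_{u < n \le u e^{1/T}} c_n \Big|^2 \frac{du}{u},
\]
which is the device that trades the continuous integral over $y$ for a sum over short multiplicative windows in $n$. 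All implied constants throughout are absolute.

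For the first inequality I would apply Gallagher's lemma with $c_n = a_n \e{xn}{b}$ for each fixed pair $(b,x)$, sum the bound over $b \le B$ and $x \shortmod{b}$, and interchange the summation with the $du/u$-integral, all terms being non-negative. The resulting inner quantity $\sum_{b \le B} \sumstar_{x \shortmod{b}} \Big| \sum_{u < n \le u e^{1/T}} a_n \e{xn}{b} \Big|^2$ is now exactly of large-sieve shape: the $n$-sum runs over the $\ll u/T + 1$ integers of the window and the points $x/b$ are $B^{-2}$-separated, so it is $\ll (u/T + B^2) \sum_{u < n \le u e^{1/T}} |a_n|^2$. Interchanging the sum and the integral back, each fixed $n \le N$ is counted for $u$ in an interval of length $\asymp n/T$ on which $u \asymp n$, whence $\int_0^{\infty} (u/T + B^2)(\,\cdots\,)\frac{du}{u} \ll \sum_n |a_n|^2 (n T^{-2} + B^2 T^{-1})$; multiplying by $T^2$ yields $\ll (N + B^2 T)\sum_n |a_n|^2$, as wanted.

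For the second inequality I would argue directly, without Gallagher's lemma. Put $g(\beta) = \sum_{n \le N} a_n e(n\beta)$ and substitute first $\alpha = y/C$ and then $\beta = x/b + \alpha$: the left side becomes $C \sum_{b \le B} \sumstar_{x \shortmod{b}} \int_{x/b - T/C}^{x/b + T/C} |g(\beta)|^2 \, d\beta$, so the task is to control the overlap of the translates $J_{x,b} = \tfrac{x}{b} + [-T/C,\, T/C]$ of $\mathcal{F}_B$. If $2T \le C$ each $J_{x,b}$ is an arc of $\mr/\mz$ of length $\le 1$, and by the separation of Farey fractions these arcs cover $\mr/\mz$ with multiplicity $\ll 1 + B^2 T/C$; since $\int_0^1 |g|^2 = \sum_n |a_n|^2$ by Parseval, the sum of the integrals is $\ll (1 + B^2 T/C)\sum_n |a_n|^2$, and the leftover factor $C$ gives $\ll (C + B^2 T)\sum_n |a_n|^2$. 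If instead $2T > C$, cover $[-T/C, T/C]$ by $O(T/C)$ unit intervals and use the $1$-periodicity of $g$ to obtain $\int_{J_{x,b}} |g|^2 \ll (T/C)\sum_n |a_n|^2$ for each of the $\asymp B^2$ pairs $(x,b)$; summing and multiplying by $C$ then gives $\ll B^2 T\sum_n |a_n|^2$, which is $\ll (B^2 T + C)\sum_n |a_n|^2$ here. In all cases the bound is $\ll (B^2 T + C)\sum_n |a_n|^2$. The one ingredient that is not completely elementary is Gallagher's lemma; granting it, both estimates come down to the classical large sieve, the geometry of Farey fractions, and a few changes of variable, so the real work is only organizing the bookkeeping — chiefly the two-case split in the second inequality and the reordering of the sums and integrals in the first.
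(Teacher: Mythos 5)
Your argument is correct. Note that the paper offers no proof of this lemma at all: it is quoted with a citation to Gallagher, whose original hybrid estimate is stated for Dirichlet characters, so the displayed additive-character forms are mild variants left to the reader. Your treatment of the first inequality is exactly the standard Gallagher route the citation points to: his lemma converting $\int_{-T}^{T}|\sum_n c_n n^{iy}|^2\,dy$ into short multiplicative windows, followed by the classical additive large sieve over the $B^{-2}$-spaced Farey fractions, and the bookkeeping you describe (window length $\ll u/T+1$, each $n$ weighted by a $du/u$-measure $\asymp 1/T$ with $u\asymp n$) does yield $(N+B^2T)\sum|a_n|^2$ after multiplying by $T^2$; the only cosmetic point is absorbing the $+1$ into $B^2$, which is fine since $B\geq 1$. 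Your proof of the additive version is a clean direct alternative that bypasses Gallagher's lemma entirely: after rescaling $y=C\alpha$, the two-case analysis (overlap multiplicity $\ll 1+B^2T/C$ of the Farey translates together with Parseval when $2T\leq C$; periodicity and the count $\sum_{b\leq B}\phi(b)\ll B^2$ when $2T>C$) is complete and gives $(C+B^2T)\sum|a_n|^2$ as required. So the proposal both reconstructs the cited argument for the hybrid bound and supplies an elementary self-contained proof of the additive variant; there is no gap.
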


The following general result is useful for simplifying large sieve-type inequalities.
\begin{mylemma}
\label{lemma:linear}
Let $N \geq 1$ and suppose $b_m$ is a sequence of complex numbers with $m \leq N$.  
 Let $f(y)$ be a smooth function on $\mr$ such that for some $X > 0$, $Y \geq N^{\varepsilon}$, we have for $|y| \leq 2$, $f$ satisfies 
\begin{equation}
\label{eq:fproperties}
 f(0) = 0, \qquad f'(0) = X, \qquad 
 f^{(j+1)}(y) \ll X Y^{-j}, \text{ for } j \geq 1.
\end{equation}
Then there exists a nonnegative Schwartz-class function $q(y)$ depending on the implied constants appearing in \eqref{eq:fproperties} and $\varepsilon$ only, satisfying
\begin{equation}
\label{eq:qbound}
 x^j q^{(j)}(x) \ll_{j,C} (1 + |x|)^{-C},
\end{equation}
such that
\begin{equation}
\label{eq:9.2}
 \int_{-1}^{1} 
\Big| \sum_{m \leq N} b_m  e(m f(y)) \Big|^2 dy 
\leq \intR q(y)
\Big| \sum_{m \leq N} b_m e(m Xy) \Big|^2 dy + O(N^{-100} \sum_{m \leq N} |b_m|^2).
\end{equation}
\end{mylemma}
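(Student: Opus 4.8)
The plan is to pass to a flat exponential sum by inserting a smooth majorant, expand the square, and then feed the asymptotic expansion of the preceding lemma into each resulting Fourier coefficient, pushing the lower‑order terms into $q$ and absorbing only the tail of the expansion into the error.

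Fix once and for all a smooth nonnegative $g$, supported in $[-2,2]$, with $g\ge 1$ on $[-1,1]$, and set $S(y)=\sum_{m\le N}b_m e(mf(y))$, $S_0(y)=\sum_{m\le N}b_m e(mXy)$. Then $\int_{-1}^1|S|^2\le\intR g|S|^2$, and expanding the square and integrating termwise gives $\intR g|S|^2=\sum_{m,n\le N}b_m\overline{b_n}\,\mathcal J(m-n)$ with $\mathcal J(\lambda):=\intR g(y)e(\lambda f(y))\,dy$. Writing $f=X\phi$, the hypotheses \eqref{eq:fproperties} say precisely that $\phi$ obeys \eqref{eq:almostlinear} with the present $Y$, so the preceding lemma applies to $I(\mu):=\intR g(y)e^{i\mu\phi(y)}\,dy$; since $\mathcal J(\lambda)=I(2\pi\lambda X)$, reading \eqref{eq:Fasymp} at $\mu=2\pi\lambda X$ (and using $I_0(\mu)=\widehat g(-\mu/2\pi)$) gives, for a large integer $K$ to be chosen,
\[
\mathcal J(\lambda)=\widehat g(-\lambda X)+\sum_{j=1}^{K}I_j(2\pi\lambda X)+O\big(Y^{-K/2}\big),\qquad\text{uniformly in }\lambda\in\mz.
\]

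The key point is that each $I_j(2\pi\lambda X)$ is again the Fourier transform, at $-\lambda X$, of a \emph{fixed} Schwartz function. From the explicit shape \eqref{eq:Ijformula}, $I_j(\mu)=Y^{-j}\sum_k c_{j,k}\,\mu^{j}\,\widehat{g_k}(-\mu/2\pi)$ with $g_k(t)=t^k g(t)$; putting $\mu=2\pi\lambda X$ and integrating by parts $j$ times to transfer the factor $(2\pi\lambda X)^j$ onto $g_k$, the powers of $X$ cancel and one is left with $I_j(2\pi\lambda X)=\widehat{G_j}(-\lambda X)$, where $G_j=Y^{-j}\,i^{\,j}\sum_k c_{j,k}\,g_k^{(j)}$. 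Since $Y\ge1$ and $|c_{j,k}|\ll_K 1$ (the $c_{j,k}$ being controlled by the implied constants in \eqref{eq:fproperties}), $|G_j|$ is dominated pointwise by a smooth nonnegative compactly supported $\widetilde\rho_j$ depending only on $g$, $K$ and those constants. Substituting the displayed expansion into $\sum_{m,n}b_m\overline{b_n}\,\mathcal J(m-n)$, using $\sum_{m,n\le N}b_m\overline{b_n}\,\widehat h(-(m-n)X)=\intR h\,|S_0|^2$ for $h\in\{g,G_j\}$, the bound $|G_j|\le\widetilde\rho_j$ together with $|S_0|^2\ge0$ and $Y^{-j}\le1$, and the trivial estimate $\big|\sum_{m,n\le N}b_m\overline{b_n}\,c(m-n)\big|\le(2N+1)\big(\sup_\lambda|c(\lambda)|\big)\sum_{m}|b_m|^2$ for the $O(Y^{-K/2})$ tail, one obtains
\[
\int_{-1}^1|S(y)|^2\,dy\le\intR\Big(g+\sum_{j=1}^K\widetilde\rho_j\Big)(y)\,|S_0(y)|^2\,dy+O\big(NY^{-K/2}\big)\sum_{m\le N}|b_m|^2.
\]
Since $Y\ge N^{\varepsilon}$, choosing $K\ge 202/\varepsilon$ makes the error term $\ll N^{-100}\sum_m|b_m|^2$; then $q:=g+\sum_{j=1}^K\widetilde\rho_j$ is nonnegative, smooth and supported in $[-2,2]$ — hence Schwartz and satisfying \eqref{eq:qbound} — and depends only on the implied constants in \eqref{eq:fproperties} and on $\varepsilon$, as required.

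I expect the main obstacle to be the identity $I_j(2\pi\lambda X)=\widehat{G_j}(-\lambda X)$ in the third paragraph and, in particular, the fact that the $G_j$ (up to the harmless factor $Y^{-j}\le1$) can be bounded independently of $f$, $X$ and $Y$: it is this structural feature — not merely the size bound \eqref{eq:Ijbound} — that lets the lower‑order terms be swallowed by $q$ rather than degrade the error, and it forces use of the full strength of \eqref{eq:Ijformula}.
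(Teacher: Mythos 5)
Your proof is correct and follows essentially the same route as the paper: majorize the indicator of $[-1,1]$ by $g$, open the square, apply the asymptotic expansion \eqref{eq:Fasymp} with $K\asymp 202/\varepsilon$ to each $\mathcal{J}(m-n)=I(2\pi(m-n)X)$, recognize the terms $I_j$ as Fourier transforms so the bilinear form collapses to $\int(\cdot)\,|S_0|^2$, and then use the positivity of $|S_0|^2$ to replace the (possibly complex, $f$-dependent) kernel by a fixed nonnegative Schwartz majorant. The only difference is cosmetic: you realize the kernel explicitly as $G_j=Y^{-j}i^j\sum_k c_{j,k}g_k^{(j)}$ via integration by parts and dominate it by a compactly supported bump, whereas the paper takes $q_K=\sum_j\widehat{I_j}(-\cdot)$ and dominates $|q_K|$ by a Gaussian-series Schwartz function.
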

The point is that the potentially complicated function $f(y)$ is essentially replaced by its best linear approximation. 
\begin{proof}
Let $g$ be a smooth compactly-supported nonnegative function satisfying $g(y) = 1$ for $|y| \leq 1$, and $g(y) = 0$ for $|y| \geq 2$.  Then
\begin{equation}
\label{eq:5.8}
 \int_{-1}^{1} \Big| \sum_{m \leq N} b_m e(m f(y))\Big|^2 dy \leq \sum_{m,n \leq N} b_m \overline{b_n} \int_{-2}^{2} g(y) e((m-n) f(y)) dy.
\end{equation}
Let $\lambda =  2 \pi X(m-n)$ and set $f_{X}(y) = X^{-1} f(y)$, so that the inner integral is  
\begin{equation}
I(\lambda) = \int_{-2}^{2} g(y) e^{i \lambda f_{X}(y)} dy, 
\end{equation}
where $f_X$ satisfies \eqref{eq:almostlinear}.  Next we insert the asymptotic expansion \eqref{eq:Fasymp} into \eqref{eq:5.8}, so
\begin{equation}
 \int_{-1}^{1} \Big| \sum_{m \leq N} b_m e(m f(y))\Big|^2 dy \leq \sum_{m,n \leq N} b_m \overline{b_n} \sum_{j \leq K} I_j(-\frac{\lambda}{2 \pi}) + O(N Y^{-K/2} \sum_{m \leq N} |b_m|^2).
\end{equation}
Then take $q_K(y) = \sum_{j \leq K} \widehat{I_j}(-y)$ and $\frac{202}{\varepsilon}  \leq K < \frac{202}{\varepsilon} + 1$ so that $q_K$ satisfies \eqref{eq:qbound} (using Lemma \ref{lemma:basicFourier}), and
\begin{equation}
 \int_{-1}^{1} \Big| \sum_{m \leq N} b_m e(m f(y))\Big|^2 dy \leq \sum_{m,n \leq N} b_m \overline{b_n} \intR q_K(y) e^{i \lambda y} dy + O(N^{-100} \sum_{m \leq N} |b_m|^2).
\end{equation}
Using the definition of $\lambda$ and re-separating the variables $m$ and $n$, we obtain
\begin{equation}
 \int_{-1}^{1} \Big| \sum_{m \leq N} b_m e(m f(y))\Big|^2 dy \leq 
 \intR q_K(y) \Big|\sum_{m \leq N} b_m e(mXy) \Big|^2 dy
   + O(N^{-100} \sum_{m \leq N} |b_m|^2).
\end{equation}
If $q_K(y)$ is nonnegative then the proof is complete taking $q(y) = q_K(y)$; otherwise we construct a nonnegative Schwartz-class function $q(y) \geq q_K(y)$.  One such construction proceeds by defining real numbers $M_n := \sup_{n-1 \leq |y| \leq n} |q_K(y)|$, for $n=1,2,3, \dots$.  Note that for each $N > 0$, there exists $C_N$ such that $M_n \leq C_N n^{-N}$.  Then define $q(y) = e \sum_{n \geq 1} M_n e^{-(y/n)^2}$, which dominates $q_K$, and is Schwartz-class.

\end{proof}

The following Lemma is useful for converting between multiplicative and additive  characters in a bilinear form setting.  The idea used in the proof can be used very generally with various integral transforms.  Indeed, the ideas shall be used later in a more complicated situation in the proof of Lemma \ref{lemma:Phiproperties}.
\begin{mylemma}
\label{lemma:conversion}
 Let $b_m$ be complex numbers, and suppose $T \geq M^{\varepsilon}$ for some $\varepsilon > 0$.  Then
\begin{equation}
\label{eq:multtoadd}
 \int_{-T}^{T} \Big|\sum_{M < m \leq 2M} b_m m^{it}\Big|^2 dt \ll  \int_{|y| \ll T}  \Big|\sum_{M < m \leq 2M} b_m \e{my}{M}\Big|^2 dy + O_{\varepsilon}(M^{-100} \sum_{M < m \leq 2M} |b_m|^2),
\end{equation}
where the implied constants depend on $\varepsilon > 0$ only.
Similarly,
\begin{equation}
\label{eq:addtomult}
 \int_{-T}^{T} \Big|\sum_{M < m \leq 2M} b_m \e{my}{M} \Big|^2 dy \ll  \int_{|y| \ll T}  \Big|\sum_{M < m \leq 2M} b_m m^{it}\Big|^2 dt + O_{\varepsilon}(M^{-100} \sum_{M < m \leq 2M} |b_m|^2),
\end{equation}
\end{mylemma}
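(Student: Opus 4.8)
The plan is to prove \eqref{eq:multtoadd}; the reverse statement \eqref{eq:addtomult} will follow by the same method with the roles of the two kernels interchanged. First I would insert a smooth majorant: fix once and for all a nonnegative, smooth, compactly supported function $g$ with $g(t)\ge 1$ for $|t|\le 1$ and $\widehat{g}\ge 0$ (e.g.\ an autoconvolution $g=g_0*g_0$ with $g_0$ real and even, suitably scaled). Then
\[
 \int_{-T}^{T}\Big|\sum_{M<m\le 2M}b_m m^{it}\Big|^2 dt \;\le\; \int_{\mathbb R} g(t/T)\Big|\sum_{M<m\le 2M}b_m m^{it}\Big|^2 dt .
\]
Writing $m^{it}=M^{it}(m/M)^{it}$ and using $|M^{it}|=1$, we may replace $m^{it}$ by $(m/M)^{it}$ throughout, so that the phases involve the fixed smooth function $m\mapsto\log(m/M)$ on $(M,2M]$.

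Next I would expand the square and carry out the $t$-integral, obtaining the bound $\sum_{M<m,n\le 2M} b_m\overline{b_n}\,T\,\widehat g\big(\tfrac{T}{2\pi}\log(m/n)\big)$. The point is to recognize the inner kernel as the Fourier transform, in the variable $(m-n)/M$, of a nice function concentrated at frequencies $\ll T$, up to a negligible error. Here one uses that $\log(m/n)=\log(m/M)-\log(n/M)$, that $x\mapsto\log(1+x)$ is a fixed smooth function on $[0,1]$ with first derivative in $[\tfrac12,1]$ and all higher derivatives $O(1)$, and hence that on the scale $M$ --- precisely the frequency scale of the additive characters $e(my/M)$ --- the quantity $\log(m/n)$ is well approximated by its linear term $(m-n)/M$. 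Concretely I would apply the linearization technique of Lemma \ref{lemma:linear} (equivalently, the asymptotic expansion for oscillatory integrals developed just before it): expand $T\,\widehat g\big(\tfrac{T}{2\pi}\log(m/n)\big)$ into a finite asymptotic series whose $j$-th term has the shape $T\big(\tfrac{m-n}{M}\big)^{j}$ times a Schwartz function of $\tfrac{T(m-n)}{M}$, with remainder $O(M^{-100})$; the hypothesis $T\ge M^{\varepsilon}$ supplies the decay needed to make the remainder this small, and after Cauchy--Schwarz it contributes $\ll M^{-100}\sum_m |b_m|^2$.

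Each term of the expansion is the Fourier transform, in $(m-n)/M$, of a Schwartz function $\phi_j(y)$ concentrated on $|y|\ll T$; Fourier-inverting and re-separating the sums over $m$ and $n$ converts $\sum_{m,n}b_m\overline{b_n}(\cdots)$ into $\int_{\mathbb R}\phi_j(y)\big|\sum_m b_m e(my/M)\big|^2\,dy$. Truncating each (rapidly decaying) $\phi_j$ to $|y|\ll T$ at the cost of $O(M^{-100})$ and summing the finitely many terms yields \eqref{eq:multtoadd}. For \eqref{eq:addtomult} I would run the analogous computation: now the left side is cleanly $\sum_{m,n}b_m\overline{b_n}\,T\,\widehat g\big(\tfrac{T(m-n)}{2\pi M}\big)$ after smoothing, while one bounds $\int_{|y|\ll T}\big|\sum_m b_m m^{iy}\big|^2\,dy$ from below by a smooth, compactly supported weight with nonnegative Fourier transform (a Fej\'er-type kernel), and then compares the two positive-semidefinite quadratic forms by the same expansion of $\log(m/n)$ about $(m-n)/M$. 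The main obstacle throughout is exactly the mismatch between the nonlinear phase $\log m$ and the linear phase $m/M$; it is overcome because $\log$ has bounded higher derivatives and first derivative $\asymp 1/M$ on $[M,2M]$, so it is linear on the relevant scale and the error committed in replacing one by the other is $\ll M^{-100}$.
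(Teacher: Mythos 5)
Your reduction to the kernel $\sum_{m,n} b_m\overline{b_n}\,T\,\widehat g\bigl(\tfrac{T}{2\pi}\log(m/n)\bigr)$ is fine, but the central step --- replacing this kernel, up to $O(M^{-100})$, by a finite sum of functions of $m-n$ alone, each of the shape $T\bigl(\tfrac{m-n}{M}\bigr)^{j}\Psi_j\bigl(\tfrac{T(m-n)}{M}\bigr)$ --- is not valid, and Lemma \ref{lemma:linear} does not deliver it. Writing $m-n=h$, one has $\log(m/n)=\tfrac{h}{n}+O(h^2/M^2)$, so on the essential support $|h|\ll M/T$ of the kernel the argument of $\widehat g$ is essentially $\tfrac{Th}{2\pi n}$, and as $n$ runs over $(M,2M]$ this argument moves by a multiplicative factor of about $2$, i.e.\ by an amount of order $1$, exactly on the scale where $\widehat g$ varies by order $1$. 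Hence for fixed $h$ with $T|h|\asymp M$ the true kernel varies by $\asymp T$ as a function of $n$, while any finite sum of functions of $h$ alone is constant in $n$ for fixed $h$; so no approximation by such sums with remainder $O(M^{-100})$ (or even $o(T)$) can exist. Equivalently, the hypothesis of Lemma \ref{lemma:linear}, that the phase be within relative error $Y^{-1}\le N^{-\varepsilon}$ of linear, fails here: the derivative of $\log$ on $[M,2M]$ varies by a factor of $2$, so the relevant ``$Y$'' is $\asymp 1$, not $\ge M^{\varepsilon}$. The slope of the linearization genuinely depends on $n$ (it is $1/n$, not $1/M$), and that dependence is exactly what your expansion discards. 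A secondary issue: you choose $g$ compactly supported with $\widehat g\ge 0$, so $\widehat g$ only decays rapidly at scale $T$; truncating the dual weights to $|y|\ll T$ with an absolute implied constant then costs roughly $C^{-N}MT\sum_m|b_m|^2$, which is not $O(M^{-100}\sum_m|b_m|^2)$ for fixed $C,N$ (the paper instead takes $\widehat g$ itself compactly supported).

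The paper's proof circumvents this obstruction rather than solving it: it never approximates $\log(m/n)$ by a function of $m-n$. It writes $w(x/M)x^{it}=\intR \widehat{f_t}(y)e(xy)\,dy$ (Fourier inversion in a continuous variable $x$), truncates to $|y|\ll T/M$ by integration by parts, and only then performs the $t$-integral, so that the resulting localization $|x_1-x_2|\ll M/T$ coming from $T\widehat g\bigl(\tfrac{T}{2\pi}\log(x_2/x_1)\bigr)$ falls on continuous auxiliary variables; two rounds of opening the square and applying $|AB|\le\tfrac12(|A|^2+|B|^2)$ (a ``continuous Gauss sum'' trick) then eliminate the extra integrals and produce $\int_{|y|\ll T}\bigl|\sum_m b_m\e{my}{M}\bigr|^2dy$. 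If you wish to keep your kernel-comparison framework, you would need to retain the $n$-dependence of the slope --- e.g.\ by a further separation of variables in $n/M$ --- which is in effect what the paper's double-duality argument accomplishes.
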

\begin{proof}
 The idea is basically a continuous analog of the more well-known conversion between additive and multiplicative characters using Gauss sums.  We shall prove only \eqref{eq:multtoadd}, the other case \eqref{eq:addtomult} being very similar.

Let $g$ be a smooth, nonnegative, even function such that $g(x) \geq 1$ for $|x| \leq 1$, and such that the Fourier transform of $g$ has compact support.  Similarly, let $w(x)$ be a smooth nonnegative function supported on $(0, \infty)$ satisfying $w(x) = 1$ for $1 \leq x \leq 2$.  As a minor convenience we furthermore suppose $w(x) \leq g(x)$.  Then the left hand side of \eqref{eq:multtoadd} is
\begin{equation}
 \leq \intR g(t/T) \Big|\sum_{m} b_m w(m/M) m^{it}\Big|^2 dt =:J,
\end{equation}
where we assume for convenience that $b_m$ is supported on $M < m \leq 2M$.  By the Fourier inversion theorem,
\begin{equation}
 w(x/M) x^{it} = \intR \widehat{f_t}(y) e(xy) dy, \qquad \widehat{f_t}(y) = \intR w(x/M) x^{it} e(-xy) dx.
\end{equation}
An integration by parts argument shows that if $|y| M \gg T$ with a large enough implied constant depending on the support of $w$, we have for any $C > 0$
\begin{equation}
 \widehat{f_t}(y) \ll_C T^{-C}.
\end{equation}
Then with $Y = \frac{T}{M}$, we have
\begin{equation}
 J = \intR g(t/T) \Big|\sum_{m} b_m \int_{|y| \ll Y} \widehat{f_t}(y) e(my) dy\Big|^2 dt + O(M^{-100} \sum_m |b_m|^2),
\end{equation}
taking $C$ large enough with respect to $\varepsilon$.  Write this expression for $J$ as $J_1$ plus the error term.

Now we open up the square to get
\begin{equation}
 J_1 = \sum_{m,n} b_m \overline{b_n} \int_{|y_1| \ll Y} \int_{|y_2| \ll Y} e(my_1 - ny_2) \Big[ \intR g(t/T) \widehat{f_{t}}(y_1) \overline{\widehat{f_t}}(y_2) dt\Big] dy_1 dy_2.
\end{equation}
Using the definition of $\widehat{f_t}$, this $t$-integral takes the form
\begin{equation}
 \intR \intR w(x_1/M) w(x_2/M) e(-x_1 y_1 + x_2 y_2) \intR g(t/T) (x_1/x_2)^{it} dt dx_1 dx_2.
\end{equation}
This innermost $t$-integral can be expressed as $T\widehat{g}(\frac{T}{2\pi } \log(x_2/x_1))$, where recall $\widehat{g}$ has compact support, and where $x_1, x_2 \asymp M$ from the support of $w$.  Thus the integral is zero unless $|x_1-x_2| \ll M/T \asymp Y^{-1}$.  We impose this condition on $x_1$ and $x_2$, and again write $J_1$ as a double sum and a quintuple integral as follows
\begin{multline}
\label{eq:J1quintuple}
 J_1 = \intR g(t/T) \mathop{\int \int}_{|x_1-x_2| \ll Y^{-1}} w(x_1/M) w(x_2/M) (x_1/x_2)^{it} 
\\
\Big(\sum_m \int_{|y_1| \ll Y} b_m e(my_1) e(-x_1 y_1) dy_1 \Big) 
\Big(\sum_n \int_{|y_2| \ll Y} \overline{b_n} e(-ny_2) e(x_2 y_2) dy_2\Big) dx_1 dx_2 dt.
\end{multline}
We put in absolute value signs to write this in the form $|J_1| \leq \int_{t} \int_{x_1} \int_{x_2} |\sum_m \int_{y_1} | |\sum_n \int_{y_2}|$, and then apply 
the simple inequality $|A| |B| \leq \half(|A|^2 + |B|^2)$.  In our application, each of these two terms lead to the same sum, so we have
\begin{equation}
 |J_1| \leq \intR g(t/T) \mathop{\int \int}_{|x_1-x_2| \ll Y^{-1}} w\leg{x_1}{M} w\leg{x_2}{M} \Big|\sum_m \int_{|y_1| \ll Y} b_m e(my_1) e(-x_1 y_1) dy_1 \Big|^2 dx_1 dx_2 dt.
\end{equation}
We easily bound the $t$ and $x_2$ integrals with absolute values, obtaining
\begin{equation}
 |J_1| \ll \frac{T}{Y} \intR w(x/M) \Big|\sum_m \int_{|y| \ll Y} b_m e(my) e(-xy) dy\Big|^2 dx.
\end{equation}

By comparison to \eqref{eq:J1quintuple}, the gain is that we have executed two of the integrals .  The next step is to do essentially the same procedure as before to execute the inner $y$-integral.  Recalling the assumption $w(x) \leq g(x)$, we have after opening the square
\begin{equation}
 |J_1| \ll \frac{T}{Y}  \sum_{m} \sum_n b_m \overline{b_n} \int_{|y_1| \ll Y} \int_{|y_2| \ll Y} e(my_1 - ny_2) \intR g(x/M) e(-x(y_1 - y_2)) dx dy_1 dy_2.
\end{equation}
The inner $x$-integral is $M \widehat{g}(M(y_1 - y_2))$, so we may suppose $|y_1 - y_2| \ll M^{-1}$, since otherwise the $x$-integral is zero.  By a similar arrangement as in the previous paragraph, we have
\begin{equation}
 |J_1| \ll \frac{T}{Y} \intR g(x/M) \mathop{\int \int}_{\substack{|y_1 - y_2| \ll M^{-1} \\ |y_1|, |y_2| \ll Y }} \big|\sum_m b_m e(m y_1) \big|^2 dy_1 dy_2 dx.
\end{equation}
Bounding the $x$- and $y_2$-integrals trivially, we have
\begin{equation}
 |J_1| \ll \frac{T}{Y} \int_{|y| \ll Y} |\sum_m b_m e(my)|^2 dy.
\end{equation}
Changing variables $y \rightarrow \frac{Y}{T} y$ and recalling $Y = T/M$  completes the proof.
\end{proof}

\section{The mean-value results}
With notation given as in Lemma \ref{lemma:meanvaluereduction}, let
\begin{multline}
\label{eq:Mdef}
\mathcal{M}(R,S,D,Q) = \int_{-R}^{R} \sum_{S \leq t_j \leq S + D} \alpha_j \Big|\sum_{n \geq 1} \frac{\lambda_{F \times u_j}(n) W(n)}{n^{\half + it + it_j + iT_0}}\Big|^2 dt 
\\
+  \int_{-R}^{R} \frac{1}{4 \pi} \int_{S}^{S+D} 
\alpha_{\tau}
 \Big|\sum_{n \geq 1} \frac{\lambda_{F \times E_{\tau}}(n) W(n)}{n^{\half + it + i\tau + iT_0}}\Big|^2 d\tau dt.
\end{multline}

Our main technical result is
\begin{mytheo}
\label{thm:mainthm}
 We have
\begin{equation}
\label{eq:mainthm}
  \mathcal{M}(R,S,D,Q) \ll  Q^{\half} |A_F(1,1)|^2 T^{\varepsilon},
\end{equation}
where the implied constant is independent of $F$.
\end{mytheo}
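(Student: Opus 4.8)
The plan is to expand the square in \eqref{eq:Mdef}, insert the Hecke relation $\lambda_{F \times u_j}(n) = \sum_{m_1^2 m_2 = n} A_F(m_1, m_2)\lambda_j(m_2)$ together with its Eisenstein analogue, smooth the $t$-integral over $[-R,R]$ by a fixed nonnegative majorant with compactly supported Fourier transform, and then apply the $GL_2$ Kuznetsov formula (Theorem \ref{thm:Kuznetsov}) in the variables $m_2, n_2$. For the test function one takes $h$ even, holomorphic, and essentially localized to $\pm[S,S+D]$, carrying the extra oscillatory factor $(m_1^2 m_2/(n_1^2 n_2))^{-ir}$ (harmless, since the indices are $\ll Q^{\half+\varepsilon}$ so this factor has logarithmic derivative $O(\log T)$), normalized so that the left side of Kuznetsov reproduces, up to $O(T^{-100})$, the combined discrete-plus-continuous spectral average in \eqref{eq:Mdef}. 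Kuznetsov then splits $\mathcal{M}(R,S,D,Q)$ into a diagonal term, coming from $\delta_{m_2, n_2}$, and an off-diagonal term built from the Kloosterman sums $S(m_2, n_2; c)$ weighted by the Bessel transform $H$ of \eqref{eq:Hdef}; the Eisenstein piece is carried along in parallel since Kuznetsov packages the two spectra and the coefficients $\lambda(m,\thalf+i\tau)$ obey the same bounds as the $\lambda_j(m)$.

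The diagonal term is governed by its truly-diagonal part $m_2 = n_2$, $m_1 = n_1$: here the smoothed $t$-integral is $\asymp R$, the Kuznetsov main term $\int r\tanh(\pi r)h(r)\,dr$ is $\asymp SD$, and $\sum_{m_1,m_2}|A_F(m_1,m_2)|^2 W(m_1^2 m_2)^2 (m_1^2 m_2)^{-1} \ll_\varepsilon |A_F(1,1)|^2 T^\varepsilon$ by Lemma \ref{lemma:Molteni}, partial summation, and the support of $W$ on $n \le Q^{\half+\varepsilon}$. Using \eqref{eq:RDST} and the explicit formula for $Q$ one checks $R S D \ll Q^{\half} T^\varepsilon$ (equivalently $R S^2 D \ll T^2(S+(\alpha-\beta))(1+(\alpha-\beta))\,T^\varepsilon$, which follows from $D \le S \le T$ and $R \ll (1+(\alpha-\beta))T^\varepsilon$), so this part contributes $\ll Q^{\half+\varepsilon}|A_F(1,1)|^2$; this is the ``drop all but one term'' convexity bound. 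The remaining $m_2 = n_2$, $m_1 \ne n_1$ pieces are controlled by the cancellation forced in the $r$- and $t$-integrals (which produce factors $SD(1+D|\log(n_1/m_1)|)^{-100}$ and $\min(R,|\log(n_1/m_1)|^{-1})$ respectively), and also contribute $\ll Q^{\half+\varepsilon}|A_F(1,1)|^2$.

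For the off-diagonal term one analyzes $H(4\pi\sqrt{m_2 n_2}/c)$ for $r$ in the support of $h$, i.e.\ $r\asymp S$: repeated integration by parts in \eqref{eq:Hdef} shows $H$ is negligible unless $c \ll \sqrt{m_2 n_2}\,T^\varepsilon/S$, and in that range $H$ is an oscillatory function with controlled derivatives. Opening the Kloosterman sums into additive characters $e(ax/c)$ and reorganizing the $m_1, n_1, m_2, n_2$ and $c$ sums, one arrives at a bilinear form with standard exponential sums, namely the quantity \eqref{eq:MABdef}, with one parameter $A \asymp Q^{\half}$ (the length of the $n = m_1^2 m_2$ sum) and another parameter $B$ controlling the $c$-range. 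Lemmas \ref{lemma:linear} and \ref{lemma:conversion} are used to replace the oscillatory phases by their best linear approximations and to pass between multiplicative and additive characters, so that the sum is in a shape to which Gallagher's large sieve (Lemma \ref{lemma:largesieve}) applies.

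The estimation of \eqref{eq:MABdef} then splits according to the size of $B$. When $B$ is large (equivalently $A$ small relative to the large-sieve balance point), a direct application of Lemma \ref{lemma:largesieve}, together with $\sum_n |\lambda_{F\times u_j}(n)|^2 W(n)^2 \ll |A_F(1,1)|^2 Q^{\half+\varepsilon}$ from Lemma \ref{lemma:Molteni}, yields \eqref{eq:mainthm} directly. When $B$ is small — the $n$-sum is long and the moduli are small — the direct large sieve loses, and one first invokes the $GL_3$ Voronoi formula (Theorem \ref{thm:Voronoi}) in the long variable to replace the sum by a dual sum which, in the relevant range of $c$, is genuinely shorter, before applying the large sieve to the shortened sum. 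The main obstacle, where essentially all the remaining work lies, is twofold: controlling the Voronoi weight functions $\Psi_\pm$ (and the relevant Mellin transforms) uniformly in the Langlands parameters $\alpha, \beta, \gamma$ of the varying form $F$ — this is the purpose of Lemma \ref{lemma:Phiproperties}, where, in contrast to \cite{Y}, integration by parts replaces any stationary phase analysis — and then calibrating the length of the dual sum against the large sieve so as to recover precisely the exponent $\half$ in $Q^{\half}$, with all implied constants independent of $F$.
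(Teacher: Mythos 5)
Your outline retraces the paper's own route (Hecke relation, smoothed $t$-integral, Kuznetsov, diagonal plus Kloosterman terms, then Gallagher's large sieve directly in one range and the $GL_3$ Voronoi formula followed by the large sieve in the complementary range), but as written it is a plan rather than a proof, and it contains two structural inaccuracies that would derail the bookkeeping if followed literally. First, in \eqref{eq:MABdef} the parameter $A$ is not the length of the $n$-sum: after the Kuznetsov step the modulus $c$ is factored as $c=ab$ with $a=(r\pm1,c)$, and $A,B$ are dyadic sizes of $a,b$ subject to \eqref{eq:ABtruncation}; the dichotomy that drives the argument is $A\lessgtr \frac{N}{RSD}$ (Corollary \ref{coro:smallA} versus Lemma \ref{lemma:bigA}), and misreading $A$ as the sum length ($\asymp Q^{1/2}$) obscures why Voronoi shortens the dual sum precisely in the large-$A$ range. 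Second, your truncation of the Kloosterman moduli at $c\ll \sqrt{m_2n_2}\,T^{\varepsilon}/S$ is too weak: the phase analysis of $H$ in Lemma \ref{lemma:Kmncalc}, which exploits the constraint $|\log(m/n)|\ll R^{-1}$ forced by the smoothed $t$-integral, gives $c\ll MT^{\varepsilon}/(SR)$, and that extra factor $1/R$ is exactly what makes the direct large-sieve term $RSB$ come out $\ll Q^{\half+\varepsilon}$; with your threshold this case fails by a factor of order $R$, which can be as large as a power of $T$.

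Beyond this, the two items you yourself flag as ``the main obstacle, where essentially all the remaining work lies'' are not routine and are left undone: the uniform control of the Voronoi weights in the varying Langlands parameters is Lemma \ref{lemma:Phiproperties} (proved by a duplication argument in the spirit of computing the modulus of a Gauss sum, not by asymptotics), and the calibration of the dual sum against the large sieve is all of Section \ref{section:largesieve2}, where the coefficients $A_F(l,n)$ with $l>1$ force the gcd decompositions of Lemmas \ref{lemma:9.2}--\ref{lemma:9.4} (combining $b$ and $l$ into one modulus), and the final verification that the terms $I$, $II$, $III$ are $\ll Q^{\half}T^{\varepsilon}$ uses the precise relations \eqref{eq:RSDT'}, including the extra constraints $D\ll\alpha-\beta$, $S\asymp T$ when $T_0=\gamma$. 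Without these steps the exponent $Q^{\half}$ is asserted rather than proved. A smaller remark: your diagonal treatment with $m_2=n_2$, $m_1\neq n_1$ cross terms is avoidable (and cleaner) if you apply Cauchy--Schwarz in the $l=m_1$ variable before Kuznetsov, as in \eqref{eq:MandMl}; the decay factors you claim would otherwise have to be checked carefully in both the small-$m_1$ and large-$m_1$ regimes.
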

In view of Lemma \ref{lemma:meanvaluereduction}, Theorem \ref{thm:mainthm} immediately implies Theorem \ref{thm:normbound}.

The outline of the proof of Theorem \ref{thm:mainthm} is similar to \cite{Y} but virtually all the details are changed for a variety of reasons.  The main issue is that the $GL_{3}$ form $F$ is varying and it is seemingly very difficult to alter the proof given in \cite{Y} to handle this more general case.  Instead, we found new arguments that are fairly ``soft'' compared to \cite{Y}.  In fact, we were able to avoid any applications of stationary phase or elaborate asymptotic expansions of integral transforms, and instead use only integration by parts.

In this section we perform some simplifications and apply the Kuznetsov formula.
\begin{mylemma}
 Let $\sum_{M} \gamma(n/M) = 1$ be a smooth dyadic partition of unity; that is, $\gamma$ is a certain smooth function with support inside the interval $[1/2,1]$, and $M$ runs over powers of $2$.  Define for any sequence of complex numbers $a_n$,
\begin{multline}
\label{eq:Mandef}
\mathcal{M}(R,S, D,Q, M; a_n) = \int_{-R}^{R} \sum_{S \leq t_j \leq S+D} \alpha_j
\Big| \sum_{M/2<n \leq M} 
a_n \lambda_j(n) n^{-it - it_j} 
\Big|^2 dt
\\
+
\int_{-R}^{R} \frac{1}{4 \pi} \int_{S}^{S+D} \alpha_\tau
\Big| \sum_{M/2<n \leq M} 
a_n \lambda(n, \thalf + i\tau) n^{-it - i\tau} 
\Big|^2 d\tau dt.
\end{multline}
Then with $a_n = a_{n,l,M}$ defined by
\begin{equation}
\label{eq:andef}
a_n = \frac{A_F(l,n)}{n^{\half + iT_0}} W(nl^2) \gamma(n/M),
\end{equation}
we have with $N = Q^{\half + \varepsilon}$,
\begin{equation}
\label{eq:MandMl}
 \mathcal{M}(R, S, D, Q) \ll (\log{N})^3 \sup_{1 \ll M \ll N} \sum_{l \leq \sqrt{N/M}}  l^{-1} 
\mathcal{M}(R,S, D,Q, M; a_n).
\end{equation}
\end{mylemma}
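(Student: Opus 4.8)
The plan is to open up the Dirichlet coefficients $\lambda_{F\times u_j}(n)$ and $\lambda_{F\times E_\tau}(n)$, peel off the contribution of the ``square'' variable, and then apply Cauchy--Schwarz together with a dyadic decomposition; each step costs only a power of $\log N$. First I would record the coefficient identities implied by \eqref{eq:GL3GL2RS} and \eqref{eq:LFEdef}: writing these $L$-functions as ordinary Dirichlet series forces
\begin{equation*}
\lambda_{F\times u_j}(n)=\sum_{l^2 m=n}A_F(l,m)\lambda_j(m),\qquad \lambda_{F\times E_\tau}(n)=\sum_{l^2 m=n}A_F(l,m)\lambda(m,\thalf+i\tau).
\end{equation*}
Substituting into the inner sums of \eqref{eq:Mdef} and separating off the $l$-dependence, the discrete term becomes
\begin{equation*}
\sum_{n\ge1}\frac{\lambda_{F\times u_j}(n)W(n)}{n^{\half+it+it_j+iT_0}}=\sum_{l}\frac{1}{l^{\,1+2i(t+t_j+T_0)}}\sum_{m}\frac{A_F(l,m)\lambda_j(m)}{m^{\half+i(t+t_j+T_0)}}W(l^2 m),
\end{equation*}
and similarly for the Eisenstein term; since $W$ is supported on $x\le N$, here $l$ runs over $l\le\sqrt N$ and the inner $m$ over $m\le N/l^2$.

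Next, with $t$ and the spectral parameter held fixed, I would apply Cauchy--Schwarz to the $l$-sum, using $\sum_{l\le\sqrt N}l^{-1}\ll\log N$; this bounds the square of the left-hand side by $(\log N)\sum_{l\le\sqrt N}l^{-1}$ times the square of the inner $m$-sum. Into that $m$-sum I would insert the dyadic partition $\sum_M\gamma(m/M)=1$; for fixed $l$ only $O(\log N)$ values of $M$ contribute (those with $l^2 M\ll N$), so a second Cauchy--Schwarz over these blocks costs one further factor $\log N$ and leaves, for each admissible pair $(l,M)$ with $l\le\sqrt{N/M}$, precisely the sum $\sum_{M/2<m\le M}a_{m,l,M}\lambda_j(m)m^{-it-it_j}$ of \eqref{eq:Mandef}, with $a_{m,l,M}$ as in \eqref{eq:andef} (and the identical conclusion for the Eisenstein term with $\lambda(m,\thalf+i\tau)$). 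Multiplying the resulting pointwise inequalities by the nonnegative weights $\alpha_j$, $\alpha_\tau$ and then integrating over $|t|\le R$ and summing/integrating over the spectral parameter in $[S,S+D]$ converts the right-hand side into a sum of quantities $\mathcal{M}(R,S,D,Q,M;a_{\cdot,l,M})$, giving
\begin{equation*}
\mathcal{M}(R,S,D,Q)\ll(\log N)^2\sum_{M}\sum_{l\le\sqrt{N/M}}l^{-1}\,\mathcal{M}(R,S,D,Q,M;a_{\cdot,l,M}).
\end{equation*}
Bounding the sum over the $O(\log N)$ dyadic values of $M$ by $\log N$ times its supremum then gives \eqref{eq:MandMl}.

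This lemma is essentially bookkeeping, so I do not anticipate a real obstacle: the only thing to monitor is that the two auxiliary summations (over the square variable $l$ and over the dyadic blocks $M$) each have length $O(\log N)$, which is exactly what the support of $W$ guarantees, so each Cauchy--Schwarz loses at most $(\log N)^{O(1)}$. The one point deserving a line of justification is the interchange of Cauchy--Schwarz with the spectral averaging, which is valid because the inequalities are applied pointwise in $t$ and in the spectral parameter before averaging, and the weights $\alpha_j,\alpha_\tau$ are nonnegative.
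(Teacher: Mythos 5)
Your proposal is correct and follows essentially the same route as the paper: insert the Hecke-relation factorization of $\lambda_{F\times u_j}(n)$ and $\lambda_{F\times E_\tau}(n)$, apply Cauchy--Schwarz to the $l$-sum weighted by $l^{-1}$ (costing $\log N$), insert the dyadic partition and Cauchy--Schwarz over the $O(\log N)$ blocks, and finally bound the $M$-sum by $\log N$ times its supremum, all done pointwise before the nonnegative spectral averaging. The bookkeeping, including the $(\log N)^3$ loss and the support constraint $l\le\sqrt{N/M}$ coming from $W$, matches the paper's argument.
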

\begin{proof}
First we insert the definitions
\begin{equation}
 \lambda_{F \times u_j}(m) = \sum_{l^2 n = m} \lambda_j(n) A_F(l,n), \qquad \lambda_{f \times E_{\tau}}(m) = \sum_{l^2 n = m} \lambda(n, \thalf + i\tau) A_F(l,n)
\end{equation}
into \eqref{eq:Mdef}.  We remark that it is tempting to think of the sum over $l$ as almost bounded since the $n$-dependence is much more difficult than the behavior with respect to $l$.  For this reason, we use Cauchy's inequality in the form
\begin{equation}
|\sum_{n l^2 \leq N} l^{-1} c_{l,n}|^2 \leq \log{N} \sum_{l \leq \sqrt{N}} l^{-1} | \sum_{n \leq l^{-2} N} c_{l,n}|^2.
\end{equation}
Thus we obtain, with $N = 2 Q^{\half + \varepsilon}$
\begin{multline}
\mathcal{M}(R,S,D,Q) \ll \log{Q} \sum_{l \leq \sqrt{N}} l^{-1} 
\int_{-R}^{R} \Big[\sum_{S \leq t_j \leq S +D} 
\alpha_j
 \Big| \sum_{n \leq l^{-2} N} \frac{\lambda_j(n) A_F(l,n)}{n^{\half + it + it_j + iT_0}} W(n l^2) \Big|^2 
\\
+ 
 \frac{1}{4 \pi} \int_{S}^{S +D} 
\alpha_{\tau}
\Big| \sum_{n \leq l^{-2} N} \frac{\lambda(n, \thalf + i\tau) A_F(l,n)}{n^{\half + it + it_j + iT_0}} W(n l^2) \Big|^2 d\tau \Big] dt.
\end{multline}
We apply the partition of unity to the inner sum over $n$ above, 
with $M$ restricted to $1 \ll M \ll l^{-2} N$.  Then we apply Cauchy's inequality to this sum over $M$, getting that $\mathcal{M}(R,S,D,Q)$ is
\begin{multline}
\ll (\log{N})^2 \sum_{M} \sum_{l \leq \leg{N}{M}^{\half}} l^{-1}  \int_{-R}^{R} \Big[ \sum_{S \leq t_j \leq S+D} \alpha_j
\Big| \sum_{n \leq l^{-2} N} \frac{\lambda_j(n) A_F(l,n)}{n^{\half + it + it_j + iT_0}} W(n l^2) \gamma(\frac{n}{M}) \Big|^2 
\\
+
 \frac{1}{4 \pi} \int_{S}^{S+D} \alpha_\tau
\Big| \sum_{n \leq l^{-2} N} \frac{\lambda(n, \thalf + i\tau) A_F(l,n)}{n^{\half + it + i \tau + iT_0}} W(n l^2) \gamma(\frac{n}{M}) \Big|^2 d\tau \Big] dt.
\end{multline}
Bounding this sum over $M$ by the number of terms, $O(\log{N})$ times the supremum over all $1 \ll M \ll l^{-2} N$ completes the proof.
\end{proof}

We do not exploit the sum over $l$ until the very final steps (see the remarks following \eqref{eq:N2*def}) and the reader who considers only the case $l=1$ does not miss many crucial changes from the general case.

Next we state a crude bound that is sufficient only in some extreme cases.
\begin{mylemma}
\label{lemma:Iwanieclargesieve}
 Suppose \eqref{eq:RDST} holds.  Then for any complex numbers $a_n$, we have
\begin{equation}
 \mathcal{M}(R,S,D,Q, M;a_n) \ll R(SD + M) (MT)^{\varepsilon} \sum_{n \leq M} |a_n|^2.
\end{equation}
\end{mylemma}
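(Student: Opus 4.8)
The plan is to pull the $t$-integration outside and reduce matters to a spectral large sieve bound that is uniform in $t$. Put $b_n := a_n n^{-it}$, so $|b_n| = |a_n|$, and write $\|a\|^2 := \sum_{M/2 < n \le M} |a_n|^2$. I would show that, uniformly for $t \in \mr$,
\begin{multline}
\label{eq:Bbound}
B(t) := \sum_{S \le t_j \le S+D} \alpha_j \Big| \sum_{M/2 < n \le M} b_n \lambda_j(n) n^{-it_j} \Big|^2
+ \frac{1}{4\pi} \int_S^{S+D} \alpha_\tau \Big| \sum_{M/2 < n \le M} b_n \lambda(n, \thalf + i\tau) n^{-i\tau} \Big|^2 d\tau \\
\ll (SD + M)(MT)^\varepsilon \|a\|^2,
\end{multline}
with an absolute implied constant. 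Since $\mathcal{M}(R,S,D,Q,M;a_n) = \int_{-R}^{R} B(t)\,dt$, integrating \eqref{eq:Bbound} over $|t| \le R$ immediately gives the lemma. (By \eqref{eq:RDST} we have $1 \ll R \ll D \le S \ll T$; in particular $D \gg 1$, which we use below.)

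Estimate \eqref{eq:Bbound} is a short-window, mildly twisted instance of the Deshouillers--Iwaniec spectral large sieve for Fourier coefficients of $\SL{2}{Z}$ Maass forms and Eisenstein series. The unimodular twists $n^{-it_j}$, $n^{-i\tau}$ are harmless: since $n$ lies in the dyadic block $(M/2,M]$ one has $n^{-it_j} = M^{-it_j}(n/M)^{-it_j}$, the $n$-independent factor drops out of $|\cdot|^2$, and $(n/M)^{-it_j}$ merely shifts the effective spectral parameter by $\log n = \log M + O(1)$. To prove \eqref{eq:Bbound} one argues along standard lines: pick an even, nonnegative $h = h_{S,D}$, holomorphic in a strip and of rapid decay, with $h \ge 1$ on $[S,S+D]$ (possible since $D \gg 1$); majorize the discrete sum and the Eisenstein integral in $B(t)$ by $\sum_{j \ge 1} \alpha_j h(t_j)|\cdots|^2$ and $\tfrac1{4\pi}\int_{\mr} \alpha_\tau h(\tau)|\cdots|^2 d\tau$; expand the squares; and apply the Kuznetsov formula (Theorem \ref{thm:Kuznetsov}), in the form including the continuous spectrum, to each resulting spectral average over $m,n \in (M/2,M]$. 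The diagonal term contributes $\ll \|a\|^2 \int |r|\tanh(\pi r) h(r)\,dr \ll SD\|a\|^2$. In the off-diagonal (Kloosterman) term the relevant Bessel transform is negligible unless $\sqrt{mn}/c \gg S^{1-\varepsilon}$, i.e.\ unless $c \ll M S^{-1+\varepsilon}$; opening the Kloosterman sums and invoking an additive large sieve inequality of Gallagher type (as in Lemma \ref{lemma:largesieve}) bounds the total off-diagonal contribution by $\ll M^{1+\varepsilon}\|a\|^2$. Adding the two estimates gives \eqref{eq:Bbound}, with constants independent of $F$, since only $\SL{2}{Z}$ data occur.

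The main obstacle is this final step: running the large-sieve analysis with the spectral parameter confined to the short window $[S,S+D]$---so that the ``number of forms'' factor is $SD$, not the $S^2$ that the full-range inequality $\sum_{t_j \le S+D} \alpha_j|\cdots|^2$ would give---and with the oscillatory twists present, all uniformly in $F$. (The non-evenness of $h(r)(m/n)^{-ir}$ is routine: it also produces opposite-sign Kloosterman sums $S(m,-n;c)$, estimated identically, while the odd part contributes nothing to the Eisenstein integral by the symmetry $\tau \mapsto -\tau$.) If one is content with the weaker factor $S^2$ in place of $SD$, the classical Deshouillers--Iwaniec inequality applies to $B(t)$ verbatim and the rest of the argument is unchanged.
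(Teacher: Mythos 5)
Your reduction is exactly the one the paper uses: absorb $n^{-it}$ into the coefficients, bound the inner spectral expression uniformly in $t$, and integrate trivially over $|t|\leq R$ to harvest the factor $R$. The difference is what happens at the crucial inequality. The estimate you label ``the main obstacle'' --- the spectral large sieve over the short window $[S,S+D]$, \emph{with} the unimodular twists $n^{-it_j}$, $n^{-i\tau}$ present, yielding the factor $(SD+M)$ --- is not something the paper proves; it is quoted as a known theorem (Iwaniec's spectral large sieve \cite{IwaniecLargeSieve}, in the form given in Theorem 3.3 of \cite{MotohashiBook}, which is stated with the twist $n^{it_j}$ and includes the continuous-spectrum term), and the paper's entire proof of this lemma is that citation applied to \eqref{eq:Mandef} plus the trivial $t$-integration. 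So your plan is the right one, but as a self-contained argument it stops precisely where the paper's proof is a one-line reference. Moreover, your fallback --- the full-range inequality giving $S^{2}$ in place of $SD$ --- does not prove the lemma as stated: $R(S^{2}+M)$ is genuinely weaker, and the $SD$ is what is actually needed downstream (for instance when $S\asymp T$ and $D\asymp R\asymp 1$ one has $RS^{2}\asymp T^{2}$ while $Q^{1/2}$ can be as small as about $T^{3/2}$, so the $S^2$ bound would not be $\ll Q^{1/2+\varepsilon}$; note also that $RSD$ is exactly the diagonal budget in Theorem \ref{thm:separationofvars} and Corollary \ref{coro:smallA}).

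If you insist on re-deriving the twisted short-window inequality via Kuznetsov as you sketch, be aware that the step you pass over quickly --- ``opening the Kloosterman sums and invoking an additive large sieve'' --- hides the real work: after Kuznetsov the weight attached to $S(m,\pm n;c)$ is a Bessel-type transform of $h(r)(m/n)^{-ir}$, which depends on $m$ and $n$ jointly both through $4\pi\sqrt{mn}/c$ and through the twist, and one must separate these variables (and control the transform's size, not only its support) before Gallagher's inequality can be applied. Your truncation $c\ll MS^{-1+\varepsilon}$ and diagonal bound $\ll SD$ are fine, and your remark that uniformity in $F$ is automatic (only $\SL{2}{Z}$ data occur) is correct; but the separation analysis is of the same nature as what the paper carries out later in Lemma \ref{lemma:Kmncalc} and Theorem \ref{thm:separationofvars}, and there it is aided by the near-diagonal restriction $|m-n|\ll M/R$ coming from the $t$-average, which is unavailable at fixed $t$. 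The efficient fix is simply to invoke the cited theorem rather than reprove it.
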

This bound is acceptable for proving Theorem \ref{thm:mainthm} for $R \ll T^{\varepsilon}$; it is also strong if $M$ happens to be small.  
\begin{proof}
 This follows from a variant of Iwaniec's spectral large sieve inequality \cite{IwaniecLargeSieve} in the form given by Theorem 3.3 of \cite{MotohashiBook}
\begin{multline}
 \sum_{S \leq t_j \leq S + D} \alpha_j \Big| \sum_{n \leq M} \lambda_j(n) a_n \Big|^2
+ \frac{1}{4 \pi} \int_{S}^{S+D} \alpha_{\tau} \Big| \sum_{n \leq M} \lambda(n, \thalf + i \tau) a_n \Big|^2 d\tau 
\\
\ll (SD + M)(SM)^{\varepsilon}\sum_{n \leq M} |a_n|^2.
\end{multline}
We apply this bound to \eqref{eq:Mandef} and integrate trivially over $t$.
\end{proof}
For a technical reason, it is convenient to have $D \ll S T^{-\eta}$ for some $\eta > 0$.
\begin{mylemma}
\label{lemma:Dsize}
 Suppose that \eqref{eq:RDST} holds.  For $\eta > 0$, there exists $S' \asymp S$ such that with $D' =D/T^{\eta}$ we have
\begin{equation}
 \mathcal{M}(R,S,D,Q, M;a_n) \ll T^{\eta} \mathcal{M}(R, S', D', Q, M;a_n).
\end{equation}
\end{mylemma}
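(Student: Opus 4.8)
The plan is to dissect the spectral average in \eqref{eq:Mandef} into $O(T^{\eta})$ shorter averages, each of length $D' = D/T^{\eta}$, and to exploit the positivity of the integrand.

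First I would note that, as a function of $t \in [-R,R]$, the integrand defining $\mathcal{M}(R,S,D,Q,M;a_n)$ is a sum of nonnegative quantities: the spectral weights $\alpha_j$ and $\alpha_{\tau}$ are positive, and they multiply squared moduli. Moreover, the parameters $S$ and $D$ enter \eqref{eq:Mandef} only through the constraint $S \le t_j \le S+D$ in the cuspidal sum and the range $S \le \tau \le S+D$ of the Eisenstein integral; the quantities $R$, $Q$, $M$, and the coefficients $a_n$ are untouched by what follows. Consequently, for any finite collection of real numbers $S_k$ with $[S,S+D] \subseteq \bigcup_k [S_k, S_k + D']$, one has immediately
\[
\mathcal{M}(R,S,D,Q,M;a_n) \le \sum_k \mathcal{M}(R,S_k,D',Q,M;a_n).
\]

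Next I would set $K = \lceil T^{\eta} \rceil$ and take $S_k = S + kD/K$ for $k = 0, 1, \dots, K-1$. Since $D/K \le D' = D/T^{\eta}$ and $K \ge T^{\eta}$, the intervals $[S_k, S_k + D']$ do cover $[S, S+D]$. The one genuine ingredient is that \eqref{eq:RDST} provides $D \le S$, whence $S \le S_k < S + D \le 2S$; that is, $S_k \asymp S$ uniformly in $k$. Letting $S'$ be the value among $S_0, \dots, S_{K-1}$ that maximizes $\mathcal{M}(R,S_k,D',Q,M;a_n)$ and bounding the sum over $k$ by $K \ll T^{\eta}$ times this maximum yields the claimed inequality with $S' \asymp S$.

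I do not expect a serious obstacle here: everything reduces to positivity of the summand together with the relation $D \le S$ from \eqref{eq:RDST}, which is exactly what keeps all the subinterval endpoints at scale $S$. As a byproduct one also gets $D' = D/T^{\eta} \le S/T^{\eta} \asymp S' T^{-\eta}$, which is the feature of the output tuple that makes the lemma convenient to apply later.
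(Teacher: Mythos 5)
Your proposal is correct and is essentially the paper's own argument: the paper likewise splits $[S,S+D]$ into roughly $T^{\eta}$ subintervals of length $D'=D/T^{\eta}$, uses positivity of each spectral term to bound $\mathcal{M}(R,S,D,Q,M;a_n)$ by the sum over the pieces, and then takes the number of pieces times the largest contribution, with $S'\asymp S$ exactly because \eqref{eq:RDST} gives $D\leq S$. The only differences are cosmetic (your choice $K=\lceil T^{\eta}\rceil$ and endpoints $S_k=S+kD/K$ versus the paper's subintervals of length exactly $D'$).
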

\begin{proof}
 This follows simply by breaking up the interval $[S, S+D]$ into subintervals $[S, S+D']$, $[S+D', S+2D']$, \dots, $[S+KD', S + (K+1)D']$ where $T^{\eta} -1 < K \leq T^{\eta}$, and bounding $\mathcal{M}(R,S,D,Q)$ by the number of such subintervals times the bound from the subinterval with the largest contribution.
\end{proof}
Remark.  Lemmas \ref{lemma:Iwanieclargesieve} and \ref{lemma:Dsize} allow us to assume $M \gg T^{\varepsilon}$ and replace the assumptions \eqref{eq:RDST} by
\begin{equation}
\label{eq:RSDT'}
\begin{split}
 T^{\varepsilon} \ll R \ll \alpha - \beta, \quad R \ll D T^{\eta} \ll S \ll T, 
\\
T_0 \in \{\alpha, \beta, \gamma\}, \quad Q \asymp T^2 DR(S + (\alpha-\beta))(1 + (\alpha-\beta))).
\end{split}
\end{equation}

Now we state our overall goal for this section.  Compare it to Lemma 7.1 of \cite{Y}.
\begin{mytheo}
\label{thm:separationofvars}
Suppose that $\varepsilon > 0$, \eqref{eq:RSDT'} holds, $T^{\eta} \ll M \leq T^{100}$, and $a_n$ are arbitrary complex numbers.  Then for some smooth, nonnegative Schwartz-class function $g$, 
we have
\begin{multline}
\mathcal{M}(R,S, D, Q, M; a_n) \ll R SD   \sum_{n \leq M} |a_n|^2 
+   
\\
RS  \intR  g(v/T^{\varepsilon}) 
\sum_{ab \leq \frac{M T^{\varepsilon}}{S R}} \frac{1}{ab} \sumstar_{r \shortmod{b}} 
\Big|\sum_{n \leq M} a_n \e{nr}{b} \e{n v}{abD}  \Big|^2 dv .
\end{multline}
The implied constant depends on $g$, $\varepsilon$, and $\eta$.
\end{mytheo}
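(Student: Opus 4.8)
The plan is to smooth the two truncations in \eqref{eq:Mandef}, apply the $GL_{2}$ Kuznetsov formula (Theorem~\ref{thm:Kuznetsov}), peel off the diagonal as the first term of the asserted bound, and reduce the Kloosterman term to the displayed additive--character bilinear form. We are given \eqref{eq:RSDT'}, so $R\gg T^{\varepsilon}$ and $S/D\gg T^{\eta}$, and $M\gg T^{\eta}$. First I would fix a Gaussian--type majorant $\phi=\phi_{S,D}$ of $\mathbf 1_{[S,S+D]}$: an entire even function with $\phi\ge 0$ on $\mr$, $\phi\ge 1$ on $[S,S+D]$, rapid decay and boundedness on $|\mathrm{Im}(r)|\le 1$, and $\phi$ of size $O(T^{-100})$ on $[-S,S]$ (automatic since $S/D\gg T^{\eta}$), together with a fixed smooth $g_{1}\ge 0$ with $g_{1}\ge 1$ on $[-1,1]$ and $\widehat{g_{1}}$ compactly supported. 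Since the summand and integrand in \eqref{eq:Mandef} are nonnegative, using $\mathbf 1_{[S,S+D]}\le\phi$ in both the $t_{j}$-- and $\tau$--variable (extending the $\tau$--integral to $\mr$) and $\mathbf 1_{[-R,R]}(t)\le g_{1}(t/R)$ only increases $\mathcal M(R,S,D,Q,M;a_{n})$. Now open both squares over $n$, producing a sum over $M/2<m,n\le M$ with weight $a_{m}\overline{a_{n}}(m/n)^{-it}$ times an inner spectral expression in $(m,n)$. Using that $\lambda_{j}(n)$, $\lambda(n,\tfrac12+i\tau)$ and $\alpha_{\tau}$ are real and even in the spectral parameter, this inner expression is, for each $(m,n)$, the left side of Kuznetsov's formula with the even test function $h_{m,n}(r)=\phi(r)(m/n)^{-ir}+\phi(-r)(m/n)^{ir}$, up to one extra copy of the continuous term; but that extra copy, once the $m,n$--sum is restored, equals $\tfrac{1}{4\pi}\int g_{1}(t/R)\int_{\mr}\alpha_{\tau}\phi(\tau)\big|\sum_{n}a_{n}\lambda(n,\tfrac12+i\tau)n^{-it-i\tau}\big|^{2}\,d\tau\,dt\ge 0$, so dropping it is harmless. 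The hypotheses of Theorem~\ref{thm:Kuznetsov} hold for $h_{m,n}$: evenness is built in, and holomorphy and decay on $|\mathrm{Im}(r)|\le\tfrac12+\delta$ follow from those of $\phi$ and from $|\log(m/n)|\le\log 2$.

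Applying Theorem~\ref{thm:Kuznetsov} termwise, the diagonal $\delta_{m,n}$ contributes $\ll\sum_{n\le M}|a_{n}|^{2}\,\big(\int_{\mr}g_{1}(t/R)\,dt\big)\big|\int_{\mr}r\tanh(\pi r)h_{m,m}(r)\,dr\big|$; since $h_{m,m}(r)=\phi(r)+\phi(-r)$ and $\int_{0}^{\infty}r\phi(r)\,dr\asymp SD$, this is $\ll RSD\sum_{n\le M}|a_{n}|^{2}$, the first term of the theorem. What remains is $\mathcal E=\int_{\mr}g_{1}(t/R)\sum_{m,n}a_{m}\overline{a_{n}}(m/n)^{-it}\sum_{c\ge 1}\tfrac{S(m,n;c)}{c}H_{m,n}\!\big(\tfrac{4\pi\sqrt{mn}}{c}\big)\,dt$, with $H_{m,n}$ the Bessel transform \eqref{eq:Hdef} of $h_{m,n}$. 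Since $\widehat{g_{1}}$ has compact support, the $t$--integral equals $R\,\widehat{g_{1}}\!\big(\tfrac{R}{2\pi}\log(m/n)\big)$, which vanishes unless $|m-n|\ll M/R$.

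The technical heart is a uniform analysis of $H_{m,n}$. Using the exponential decay of $J_{2ir}(x)$ for $x\ll r\asymp S$ and repeated integration by parts in $r$ against the window $\phi$ of width $D$ (together with $|m-n|\ll M/R$), one shows that $H_{m,n}(4\pi\sqrt{mn}/c)$ is negligible outside a range of $c$, and that inside that range it separates --- up to a short integral over an auxiliary variable $v$ of length $\ll T^{\varepsilon}$ --- as an amplitude $\asymp\sqrt{c/mn}$ times a product of a slowly varying function of $m$ and one of $n$, each essentially a complex exponential whose phase becomes linear after expanding $\sqrt{\,\cdot\,}$. This separation of variables, uniform in $(R,S,D)$, is the content of the forthcoming Lemma~\ref{lemma:Phiproperties}; following the method of Lemma~\ref{lemma:conversion} it is established purely by integration by parts, Fourier inversion and Taylor expansion, with no recourse to stationary phase. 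Opening $S(m,n;c)=\sum^{*}_{d\bmod c}e((md+n\bar d)/c)$, re-separating $m$ and $n$, and applying Cauchy--Schwarz in the $m\leftrightarrow n$ symmetry then collapses $\mathcal E$ to a single square; a careful bookkeeping of the Kloosterman modulus together with the amplitude and oscillation coming from $H_{m,n}$, in the course of which the effective modulus is organized as a product $ab$, yields $|\mathcal E|\ll RS\int g(v/T^{\varepsilon})\sum_{ab\ll MT^{\varepsilon}/(SR)}\tfrac{1}{ab}\sum^{*}_{r\bmod b}\big|\sum_{n\le M}a_{n}e(nr/b)e(nv/(abD))\big|^{2}\,dv$, where $g$ is a nonnegative Schwartz function produced, exactly as in Lemma~\ref{lemma:conversion}, by dominating (via Lemma~\ref{lemma:basicFourier}) the Fourier transforms of the pieces of the asymptotic expansion.

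The step I expect to be the main obstacle is this uniform separation of variables in $H_{m,n}(4\pi\sqrt{mn}/c)$: because $F$ varies, the spectral window $[S,S+D]$ and the conductor $Q$ vary, so the shape of $h_{m,n}$ and of its Bessel transform is not fixed, and everything must hold uniformly over all admissible $(R,S,D)$ and all ratios $m/n$ with $|m-n|\ll M/R$. This is precisely where the argument departs from \cite{Y}; the device that makes it go through --- trading stationary phase for integration by parts at the cost of the auxiliary $v$--integral of length $T^{\varepsilon}$ --- is the substance of Lemma~\ref{lemma:Phiproperties}. A secondary, purely bookkeeping, difficulty is tracking the exact powers of $R$, $S$, $D$ in the weight $RS$ and in the modulus range $ab\ll MT^{\varepsilon}/(SR)$.
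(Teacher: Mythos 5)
Your skeleton matches the paper's (majorize both truncations by nonnegative smooth weights, apply Kuznetsov with an $(m,n)$-dependent test function carrying the $(m/n)^{\pm ir}$ twist, bound the diagonal by $RSD\sum_{n\le M}|a_n|^2$, then reduce the Kloosterman term to an additive-character bilinear form), but the step you yourself flag as the main obstacle --- the uniform analysis of $H_{m,n}(4\pi\sqrt{mn}/c)$ --- is not supplied, and the two things you assert about it are wrong in ways that matter. First, you outsource it to Lemma \ref{lemma:Phiproperties}; in the paper that lemma concerns the weight $\Phi_k$ arising from the $GL_3$ Voronoi formula in Section \ref{section:Voronoi} and says nothing about the Kuznetsov Bessel transform: the needed work is Lemma \ref{lemma:Kmncalc}, proved inside the proof of this very theorem. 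Second, the separated shape you predict --- amplitude $\asymp\sqrt{c/mn}$ times exponentials ``whose phase becomes linear after expanding $\sqrt{\,\cdot\,}$'' --- is precisely the stationary-phase shape with phase $\e{2\sqrt{mn}}{c}$ that the paper sets out to avoid: expanding $\sqrt{mn}$ to separate $m$ from $n$ is the ``extra cost'' discussed after \eqref{eq:ctruncation}, and avoiding it is the whole reason for truncating at the larger range $c\le MT^{\varepsilon}/(SR)$ rather than $MT^{\varepsilon}/(SD)$. What actually makes the argument work is the exact identity $\frac{4\pi\sqrt{mn}}{c}\cosh\big(\frac{v}{D}-\frac12\log\frac{m}{n}\big)=\frac{2\pi m}{c}e^{-v/D}+\frac{2\pi n}{c}e^{v/D}$, giving \eqref{eq:Hformula}: amplitude $S$ (not $\sqrt{c/mn}$), exact separation of $m$ and $n$ into additive characters modulo $c$ with no square-root expansion; and the truncation \eqref{eq:ctruncation} comes from non-stationary phase in the auxiliary $v$-variable against $e^{2iSv/D}$, using $|\log(m/n)|\ll R^{-1}$, $S/D\gg T^{\eta}$ and $D\gg RT^{-\eta}$ --- not from the decay of $J_{2ir}(x)$ for $x\ll r\asymp S$ alone, which would only give $c\ll MT^{\varepsilon}/S$.

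Secondly, the passage from the Kloosterman-sum expression to the displayed bilinear form is not ``purely bookkeeping''. After Cauchy--Schwarz the character attached to $n$ is $\e{(r\pm1)n}{c}\e{\pm n(e^{v/D}-1)}{c}$, and one must (i) linearize $e^{v/D}-1\mapsto v/D$ via Lemma \ref{lemma:linear}, (ii) remove the $n^{-it}$ twist and hence the $t$-integral (which produces the factor $R$ in the prefactor $RS$) via the multiplicative-to-additive conversion of Lemma \ref{lemma:conversion}, and (iii) set $a=(r\pm1,c)$, $c=ab$, $r=\mp1+au$, which is exactly where the structure $\sumstar_{r \shortmod{b}}\e{nr}{b}\e{nv}{abD}$ and the constraint $ab\le MT^{\varepsilon}/(SR)$ come from; without (iii) the bound cannot even be stated in the claimed form. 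A smaller point: your Gaussian majorant $\phi$ lacks the polynomial factor $P$ of \eqref{eq:Pdef}, which the paper needs in order to shift the $r$-contour past the zeros of $\cosh(\pi r)$ when proving the crude bound that first truncates the $c$-sum at $c\ll M^2$.
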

The rest of this section is devoted to proving this result.  It follows from the Kuznetsov formula.  Some remarks about the form of the right hand side are in order.  The most important point is that the unknown Hecke eigenvalues of the Maass forms are gone and replaced with explicit exponentials, and the right hand side is a bilinear form. 
An important point is to explain the truncation point 
\begin{equation}
\label{eq:ctruncation}
c \leq \frac{M}{SR} T^{\varepsilon}.
\end{equation}
A reader familiar with the Kuznetsov formula might expect $c$ to be instead truncated at $\frac{MT^{\varepsilon}}{SD}$ which can be much smaller (say if $R$ is close to $1$ and both $S$ and $D$ are close to $T$).  This is true, however, one would obtain a weight function with a phase of shape $\e{2 \sqrt{mn}}{c}$ and there would be an extra cost associated with separating the variables $m$ and $n$.

Let $g$ be a fixed nonnegative Schwartz function satisfying $g(x) = 1$ for $|x| \leq 1$, and whose Fourier transform has compact support.  Then by positivity,
\begin{equation}
 \mathcal{M}(R,S,D,Q, M;a_n) = \int_{-R}^{R} [\dots] dt \leq \intR g(t/R) [\dots] dt,
\end{equation}
where $[\dots]$ indicates the inner sums on the right hand side of \eqref{eq:Mandef}.

Let 
\begin{equation}
\label{eq:Pdef}
 P(r) = \Big(\frac{r^2 + \leg{1}{2}^2}{S^2} \Big) \Big(\frac{r^2 + \leg{3}{2}^2}{S^2} \Big) \dots \Big(\frac{r^2 + \leg{299}{2}^2}{S^2} \Big).
\end{equation}
By positivity, we attach the nonnegative weight $\exp(-(\tau-S)^2/D^2) P(\tau)$, 
 to the spectral sum (and integral) and then relax the truncation on $\tau$, getting
\begin{multline}
\mathcal{M}(R,S, D, Q, M;a_n) \ll \intR g(t/R) 
\Big[
\sum_{t_j > 0} \alpha_j  \exp(-\frac{(t_j-S)^2}{D^2}) P(\tau) \Big| \sum_{n \leq M} \frac{\lambda_j(n) a_n}{n^{it + it_j}} \Big|^2 
\\
+ \frac{1}{4 \pi} \int_{\tau > 0} \alpha_{\tau}  \exp(-\frac{(\tau-S)^2}{D^2}) P(\tau)  \Big| \sum_{n \leq M} \frac{\lambda(n, \thalf + i \tau) a_n}{n^{it + i \tau}} \Big|^2 d \tau
\Big]
dt.
\end{multline}
Let
\begin{equation}
h_{m,n}(r) = \frac{\sinh(r(\pi + i \log\frac{m}{n}))}{\sinh^2(\pi r)} P(r) ( e^{\pi r} \exp(-\frac{(r-S)^2}{D^2}) - e^{-\pi r} \exp(-\frac{(-r-S)^2}{D^2})),
\end{equation}
so that $h_{m,n}$ is even and has rapid decay for $r$ large.  A computation shows for $r > 0$ that
\begin{equation}
\label{eq:hmnproperty}
h_{m,n}(r) = 2 \exp(-\frac{(r-S)^2}{D^2}) P(r) (\leg{m}{n}^{ir} + O(e^{-2\pi r}) ).
\end{equation}
Thus we have 
\begin{equation}
\mathcal{M}(R,S, D, Q, M;a_n) \ll \sum_{m,n \asymp M} \overline{a_m} a_n
\Big[\big(\intR g(t/R) \leg{m}{n}^{it} dt \big) \mathcal{K}(m,n) + O(T^{-100})\Big],
\end{equation}
where
\begin{equation}
\mathcal{K}(m,n) = \sum_{t_j}  \alpha_j  h_{m,n}(t_j) \lambda_j(m) \lambda_j(n) + \frac{1}{4 \pi} \intR \alpha_{\tau} h_{m,n}(\tau) \lambda(m, \thalf + i \tau) \overline{\lambda}(n, \thalf + i \tau) d\tau.
\end{equation}
Notice that the $t$-integral is simply expressed in terms of the Fourier transform of $g$, $\widehat{g}(x) = \intR g(y) e(-xy) dy$.  Using this, and Cauchy's inequality on the error term, we have
\begin{equation}
\label{eq:MRQZl55}
\mathcal{M}(R,S, D, Q, M;a_n) \ll R  \sum_{m,n \asymp M} \overline{a_m} a_n
\widehat{g}\Big(\frac{R}{2 \pi} \log(\frac{n}{m})\Big) \mathcal{K}(m,n) + O(M T^{-100}\sum_{n \leq M} |a_n|^2).
\end{equation}
Since $\widehat{g}$ was assumed to have compact support, then we may assume $|\log(n/m)| \ll R^{-1}$ (with an absolute implied constant).  Equivalently,
\begin{equation}
\label{eq:ghatmnclose}
\frac{|m-n|}{m} \ll R^{-1}.
\end{equation}
We shall impose this condition in the following calculations of $\mathcal{K}(m,n)$.
We make a detour in our proof of Theorem \ref{thm:separationofvars} to understand the integral transform in the Kuznetsov formula as follows.
\begin{mylemma}
\label{lemma:Kmncalc}
Suppose that 
\eqref{eq:RSDT'} 
and \eqref{eq:ghatmnclose} hold.  Then
\begin{equation}
\mathcal{K}(m,n) = H_0 \delta_{m,n} + \sum_{\pm} \sum_{c \leq \frac{MT^{\varepsilon}}{SR}} c^{-1} S(m,n;c) H_{\pm}\leg{4\pi \sqrt{mn}}{c} + O(T^{-100}) + O(SDM^{-1}),
\end{equation}
where
\begin{equation}
\label{eq:H0bound}
H_0 \ll SD,
\end{equation}
and with $k(r) = \frac{4}{\pi^2} (1 + \frac{D}{S} r) P(S+Dr)\exp(-r^2)$, with $P$ is given by \eqref{eq:Pdef}, we have
\begin{equation}
\label{eq:Hformula}
 H_{\pm}\leg{4\pi \sqrt{mn}}{c} = S \int_{|v| \leq T^{\varepsilon}} \widehat{k}(-\frac{v}{2 \pi}) e^{2i \frac{S}{D} v} \e{\pm m \exp(-v/D)}{c} \e{\pm n \exp(v/D)}{c} dv.
\end{equation}
Furthermore, $H_{\pm}\leg{4\pi \sqrt{mn}}{c} \ll T^{-400}$ unless \eqref{eq:ctruncation} holds.
\end{mylemma}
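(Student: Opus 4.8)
The plan is to apply the Kuznetsov formula (Theorem~\ref{thm:Kuznetsov}) with test function $h = h_{m,n}$ and then to read off the main and error terms. First one verifies the hypotheses: $h_{m,n}$ is even by construction; it is holomorphic in $|\mathrm{Im}(r)| \le \tfrac12 + \delta$, since the only pole of $\sinh^{-2}(\pi r)$ in that strip is the double pole at $r = 0$, at which both $\sinh\!\big(r(\pi + i\log\tfrac{m}{n})\big)$ and $e^{\pi r}\exp(-\tfrac{(r-S)^2}{D^2}) - e^{-\pi r}\exp(-\tfrac{(r+S)^2}{D^2})$ vanish (and $P$ is entire); and for $r = u + iv$ with $|v| \le \tfrac12 + \delta$ one has $|\exp(-\tfrac{(r-S)^2}{D^2})| \ll \exp(-\tfrac{(u-S)^2}{D^2})$, so $h_{m,n}$ decays faster than any power of $|r|$ across the strip. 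Kuznetsov then yields $\mathcal{K}(m,n) = \delta_{m,n}H_0 + \sum_{c \ge 1}c^{-1}S(m,n;c)\,\mathcal{H}_{m,n}\!\big(\tfrac{4\pi\sqrt{mn}}{c}\big)$, where $H_0 = \pi^{-2}\int_{\mr}r\tanh(\pi r)h_{n,n}(r)\,dr$ and $\mathcal{H}_{m,n}$ denotes the Bessel transform \eqref{eq:Hdef} of $h_{m,n}$.

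For the diagonal term one uses the uniform bound $|h_{n,n}(r)| \ll P(r)\exp(-\tfrac{(r-S)^2}{D^2}) + T^{-100}$, which is immediate for $r$ bounded away from $0$ (there $e^{\pm\pi r}/\sinh(\pi r)$ is bounded) and holds near $r = 0$ because the quotient is $\ll \exp(-S^2/D^2) \ll T^{-100}$ (as $S/D \gg T^\eta$ by \eqref{eq:RSDT'}). Since $|r\tanh(\pi r)| \le |r|$ and the range $|r - S| \ge S/2$ is negligible (there $\exp(-\tfrac{(r-S)^2}{D^2}) \le \exp(-\tfrac{S^2}{4D^2})$ outweighs the polynomial growth of $P$), a dyadic decomposition of the Gaussian gives $H_0 \ll \int_{\mr}|r|P(r)\exp(-\tfrac{(r-S)^2}{D^2})\,dr \ll SD$, which is \eqref{eq:H0bound}.

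The substance of the lemma is the off-diagonal Bessel transform. Using \eqref{eq:hmnproperty} one replaces $h_{m,n}(r)$ by its main part $\widetilde{h}_{m,n}(r) = 2P(r)\exp(-\tfrac{(r-S)^2}{D^2})(\tfrac{m}{n})^{ir}$; the discarded piece is $\ll e^{-2\pi r}P(r)\exp(-\tfrac{(r-S)^2}{D^2})$, which against the uniformly bounded kernel $\big(J_{2ir}(x) - J_{-2ir}(x)\big)/\cosh(\pi r)$ contributes $\ll e^{-\pi S} \ll T^{-100}$ per term (and the $c$-sum is effectively finite, since the kernel's oscillation forces $\mathcal{H}_{m,n}$ to decay rapidly for small argument). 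It remains to evaluate the Bessel transform of $\widetilde{h}_{m,n}$ at $x = \tfrac{4\pi\sqrt{mn}}{c}$. The delicate point is that in our ranges $x \gg r \asymp S$ but \emph{not} always $x \gg r^2$, so the first-term asymptotic $J_{2ir}(x) \sim \sqrt{2/(\pi x)}\,\cos(x - i\pi r - \tfrac{\pi}{4})$ is both inadmissible and undesirable: it would leave a weight carrying the phase $e(2\sqrt{mn}/c)$, entangling $m$ and $n$, which is exactly why the final $c$-sum is truncated at $c \le \tfrac{M}{SR}T^\varepsilon$ rather than the smaller ``natural'' value $\tfrac{M}{SD}T^\varepsilon$. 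Instead one keeps $J_{2ir}(x)$ in a form whose Fourier transform in $r$ separates the $m$- and $n$-dependence — it is here that the factor $P(r)$ plays its role, cancelling the low-lying poles of $1/\cosh(\pi r)$ so that the contour manipulations in the Bessel analysis are unobstructed — and the substitution $r = S + Dv$ then turns the integral into a bump of width $O(1)$ localised at $v = 0$ with smooth weight $k$ as in \eqref{eq:Hformula} (absorbing $P$, the Gaussian, and the Jacobian). Carrying this out produces the stated identity \eqref{eq:Hformula} for $H_\pm$ up to $O(T^{-100}) + O(SDM^{-1})$, the $SDM^{-1}$ accommodating a crude bound on a residual part of the Bessel integral. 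This uniform Bessel analysis — together with tracking every implied constant uniformly in the parameters $T, S, D, R, M$ attached to $F$ — is the step I expect to be the main obstacle.

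Finally, $H_\pm\!\big(\tfrac{4\pi\sqrt{mn}}{c}\big) \ll T^{-400}$ for $c \gg \tfrac{M}{SR}T^\varepsilon$ follows by repeated integration by parts in the $v$-integral of \eqref{eq:Hformula}. Its total phase is $\Phi(v) = \tfrac{2S}{D}v \pm \tfrac{2\pi m}{c}e^{-v/D} \pm \tfrac{2\pi n}{c}e^{v/D}$, with $\Phi'(v) = \tfrac{2S}{D} \pm \tfrac{2\pi}{cD}\big(ne^{v/D} - me^{-v/D}\big)$; since $\widehat{k}$ is Schwartz the $v$-integral may be restricted to $|v| \ll T^{\varepsilon'}$ for any fixed $\varepsilon' > 0$ up to a negligible error, and there $|ne^{v/D} - me^{-v/D}| = |2m\sinh(v/D) + (n-m)e^{v/D}|$ is — by \eqref{eq:ghatmnclose}, the hypothesis $c \gg \tfrac{M}{SR}T^\varepsilon$, and \eqref{eq:RSDT'} — small enough that $|\Phi'(v)| \asymp \tfrac{S}{D} \gg T^\eta$. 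As the higher $v$-derivatives of $\Phi$ and all derivatives of $\widehat{k}$ are polynomially bounded, each integration by parts gains a factor $\ll T^{-\eta}$, so enough of them give $\ll T^{-400}$. This justifies restricting the $c$-sum to $c \le \tfrac{M}{SR}T^\varepsilon$ (after a harmless relabelling of $\varepsilon$) up to $O(T^{-100})$, completing the proof.
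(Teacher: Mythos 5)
Your proposal handles the routine parts correctly (verifying the Kuznetsov hypotheses, the bound $H_0\ll SD$, and — given \eqref{eq:Hformula} — the non-stationary-phase argument showing $H_\pm\ll T^{-400}$ unless \eqref{eq:ctruncation} holds, which matches the paper's use of its Lemma \ref{lemma:basicFourier} with the same phase). But the heart of the lemma is the identity \eqref{eq:Hformula} itself, i.e.\ the passage from the Bessel kernel $\bigl(J_{2ir}(x)-J_{-2ir}(x)\bigr)/\cosh(\pi r)$ to an explicit $v$-integral in which the $m$- and $n$-dependence is separated as $\e{\pm m e^{-v/D}}{c}\e{\pm n e^{v/D}}{c}$, and this you do not prove: you state that one should ``keep $J_{2ir}(x)$ in a form whose Fourier transform in $r$ separates the $m$- and $n$-dependence'' and explicitly defer it as the main obstacle. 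The paper carries this out via the integral representation (\cite{GR}, 8.411.11), which gives
\begin{equation*}
\frac{J_{2ir}(x)-J_{-2ir}(x)}{\cosh(\pi r)}=\tanh(\pi r)\,\frac{2}{\pi i}\int_{\mr}\cos(x\cosh v)\,\e{rv}{\pi}\,dv;
\end{equation*}
one then inserts \eqref{eq:hmnproperty}, truncates $|v|\le T^{\varepsilon}$ by parts, swaps orders, substitutes $r\to S+Dr$ so the $r$-integral evaluates to $S\,\widehat{k}\bigl(-\tfrac{D}{\pi}(v+\tfrac12\log\tfrac{m}{n})\bigr)$, shifts $v\to v/D-\tfrac12\log(m/n)$ (legitimate because $|\log(m/n)|\ll R^{-1}$ and $R\ll DT^{\eta}$ by \eqref{eq:ghatmnclose}, \eqref{eq:RSDT'}), and finally uses the hyperbolic identity $x\cosh\bigl(\tfrac{v}{D}-\tfrac12\log\tfrac{m}{n}\bigr)=\tfrac{2\pi m}{c}e^{-v/D}+\tfrac{2\pi n}{c}e^{v/D}$ for $x=4\pi\sqrt{mn}/c$, which is precisely the separation step. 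None of these steps appears in your write-up, so the central claim is asserted rather than derived.

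A second, related gap is the truncation of the a priori infinite $c$-sum and the provenance of the error $O(SDM^{-1})$, which you attribute vaguely to ``a residual part of the Bessel integral'' and to the kernel's decay ``for small argument.'' In the paper this requires a separate crude bound: shifting the contour in \eqref{eq:Hdef} to $\mathrm{Im}(r)=-100$ — possible only because the polynomial $P$ of \eqref{eq:Pdef} was built to vanish at the low-lying poles of $1/\cosh(\pi r)$ — and using $J_{200+2iy}(x)\ll (x/(1+|y|))^{200}e^{\pi|y|}$ to get $H(x)\ll (x/S)^{200}SD$, whence the tail $c\ge M^2$ contributes $O(SD M^{-198})$; only after this can one work with $x>M^{-1}$ and control the accumulated $O(T^{-400})$ errors over the remaining $O(M^3)$ terms. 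Your parenthetical appeal to the kernel's oscillation does not substitute for this estimate, and without it neither the convergence bookkeeping for the full $c$-sum nor the stated $O(SDM^{-1})$ term is justified. In short: the skeleton is right, but the two quantitative ingredients that constitute the proof — the explicit Bessel-transform evaluation yielding \eqref{eq:Hformula} and the small-$x$ contour-shift bound enabling the $c$-truncation — are missing.
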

\begin{proof}[Proof of Lemma \ref{lemma:Kmncalc}]
The Kuznetsov formula, Theorem \ref{thm:Kuznetsov}, expresses $\mathcal{K}(m,n)$ as a diagonal term plus a sum of Kloosterman sums.  The diagonal term given by $H_0 \delta_{m,n}$ with
\begin{equation}
 H_0 = \pi^{-2} \intR r \tan(\pi r) h_{m,n}(r) dr
\end{equation}
is trivially bounded by \eqref{eq:H0bound}.

The sum of Kloosterman sums takes the form $\sum_{c \geq 1} c^{-1} S(m,n;c) H\leg{4 \pi \sqrt{mn}}{c}$, where
\begin{equation}
\label{eq:Hformula2}
H(x) = 
\frac{2i}{\pi} \int_0^{\infty} r h_{m,n}(r) \frac{J_{2ir}(x) - J_{-2ir}(x)}{\cosh(\pi r)} dr.
\end{equation}
We first require a crude bound on $H(x)$ for small values of $x$ so that we may truncate the $c$-sum.  To this end, we now show
\begin{equation}
\label{eq:Hboundsmallx}
 H(x) \ll \leg{x}{S}^{200} DS.
\end{equation}
By the following integral representation of the $J$-Bessel function (\cite{GR}, 8.411.4),
\begin{equation}
 J_{\nu}(x) = 2 \frac{(\frac{x}{2})^{\nu}}{\Gamma(\nu + \half) \Gamma(\half)} \int_0^{\pi/2} \sin^{2 \nu}{\theta} \cos(x \cos{\theta}) d\theta,
\end{equation}
valid for $\text{Re}(\nu) > -\half$, one derives from a trivial bound and Stirling's approximation that
\begin{equation}
 J_{2iy + 200}(x) \ll \leg{x}{1+|y|}^{200} e^{\pi |y|}.
\end{equation}
Now in \eqref{eq:Hformula2} (actually we need the variant integral over $\mr$; see \eqref{eq:Hdef}) we shift the contour to $\text{Im}(r) = -100$ without crossing any poles (since $P$ defined by \eqref{eq:Pdef} vanishes at the zeros of $\cosh(\pi r)$).  Using the bound
\begin{equation}
 h_{m,n}(-100i + y) \ll \leg{1+|y|}{S}^{302} \exp(-\frac{(y-S)^2}{D^2}),
\end{equation}
we immediately obtain \eqref{eq:Hboundsmallx}.

Using \eqref{eq:Hboundsmallx} for $x \leq M^{-1}$, i.e., $c \gg M^2$, and the trivial bound for the Kloosterman sum, we obtain that
\begin{equation}
 \sum_{c \geq M^2} \frac{S(m,n;c)}{c} H\leg{4 \pi \sqrt{mn}}{c} \ll \frac{SD}{M^{198}},
\end{equation}
which is a satisfactory error term for Lemma \ref{lemma:Kmncalc}.  For the rest of the proof, assume $x > M^{-1}$.  

Our next overall goal is to show that $H(x) = \sum_{\pm} H_{\pm}(x) + O(T^{-400})$, where $H_{\pm}(x)$ are defined by \eqref{eq:Hformula}.  We use this estimate for $x > M^{-1}$, leading to
\begin{equation}
 \sum_{c < M^2} \frac{S(m,n;c)}{c} H\leg{4 \pi \sqrt{mn}}{c} = \sum_{\pm} \sum_{c < M^2} \frac{S(m,n;c)}{c} H_{\pm}\leg{4 \pi \sqrt{mn}}{c} + O(M^3 T^{-400}).
\end{equation}
Recalling $M \leq T^{100}$, this error term is acceptable.  Using the fact that $H_{\pm}$ is small unless \eqref{eq:ctruncation} holds (which we prove below), we may then make this further truncation on $c$ to complete the proof. 

Now we begin the development of $H$ for larger values of $x$ using the integral representation 8.41.11 of \cite{GR} which states
\begin{equation}
 J_{2ir}(x) = \frac{2}{\pi} \int_0^{\infty} \sin(x \cosh(v) - \pi i r) \cos(2r v) dv.
\end{equation}
After some simple manipulations we arrive with the identity
\begin{equation}
\label{eq:JBesselformula}
\frac{J_{2ir}(x) - J_{-2ir}(x)}{\cosh(\pi r)} = \tanh(\pi r) \frac{2}{\pi i} \intR \cos(x \cosh(v)) \e{r v}{\pi} dv.
\end{equation}
We insert \eqref{eq:JBesselformula} into \eqref{eq:Hformula2}.  An integration by parts shows that we can truncate the $v$-integral at $T^{\varepsilon}$ with an error that is $O(x^{-1}(1 + r) \exp(-T^{\varepsilon}))) = O((1+r)\exp(-T^{\varepsilon/2}))$.  Thus we can reverse the orders of integration to get
\begin{equation}
H(x) = \frac{4}{\pi^2} \int_{|v| \leq T^{\varepsilon}} \cos(x \cosh(v)) \int_0^{\infty} r \tanh(\pi r) h_{m,n}(r) \e{rv}{\pi} dr dv + O(T^{-200}).
\end{equation}
Next we insert \eqref{eq:hmnproperty} and $\tanh(\pi r) = 1 + O(e^{-2 \pi r})$, getting
\begin{equation}
H(x) = \frac{8}{\pi^2} \int_{|v| \leq T^{\varepsilon}} \cos(x \cosh v) \int_0^{\infty} r P(r)\exp(-(\frac{r-S}{D})^2) \leg{m}{n}^{ir} \e{rv}{\pi} dr dv + O(T^{-200}).
\end{equation}
Next we change variables $r \rightarrow S + D r$ and extend the $r$-integral to $\mr$ (without a new error term), giving
\begin{multline}
H(x) = D \leg{m}{n}^{iS} 2  \int_{|v| \leq T^{\varepsilon}} \cos(x \cosh v) \e{S v}{\pi} 
\\
\intR \frac{4}{\pi^2} (S + D r) P(S+Dr)\exp(-r^2) \leg{m}{n}^{iDr} \e{D r v}{\pi} dr dv 
+ O(T^{-200}).
\end{multline}
Now we compute the $r$-integral as
\begin{equation}
S \widehat{k}(-\frac{D}{ \pi} (v + \half \log(m/n))), 
\end{equation}
where notice $\widehat{k}$ is a Schwartz-class function satsifying $r^j \widehat{k}^{(j)}(r) \ll_{j,C} (1+|r|)^{-C}$ with implied constants depending on $j$ and $C$ only.  With this definition,
\begin{equation}
H(x) = DS \leg{m}{n}^{iS} 2 \int_{|v| \leq T^{\varepsilon}} \cos(x \cosh v) \e{S v}{\pi} \widehat{k}(-\frac{D}{ \pi} (v + \half \log(m/n))) dv
+ O(T^{-200}).
\end{equation}
Since $|\log(m/n)| \ll R^{-1}$, and $R \ll D T^{\eta}$, if $|v| \gg D^{-1} T^{\eta + \varepsilon}$ then the integrand is very small.  In particular we can extend the range of integration to the whole real line without making a new error term.  
Then we change variables $v \rightarrow v/D - \half \log(m/n)$ and re-truncate the integral, getting
\begin{equation}
H(x) = S  \int_{|v| \leq T^{\varepsilon}} 2\cos(x \cosh (\frac{v}{D} - \half \log(m/n))) \e{S v}{\pi D} \widehat{k}(-\frac{v}{ \pi} ) dv
+ O(T^{-400}).
\end{equation}
Write $2\cos(y) = e^{iy} + e^{-iy}$ and correspondingly write $H(x) = H^0_{+}(x) + H^0_{-}(x)$.  Then
\begin{equation}
H^0_{\pm}(x) = S  \int_{|v| \leq T^{\varepsilon}} \widehat{k}(-\frac{v}{ \pi} ) e^{i \phi(v)} dv + O(T^{-400}),
\end{equation}
where
\begin{equation}
\phi(v) = 2 \frac{S}{D} v \pm x \cosh(\frac{v}{D} - \half \log(m/n)).
\end{equation}
Now we argue that $H^{0}_{\pm}(x)$ is very small if $x \leq \frac{SR}{T^{\varepsilon}}$.  To see this, we write the integral as
\begin{equation}
 \int_{|v| \leq T^{\varepsilon}} f(v) e^{2i\frac{S}{D} v} dv, \qquad f(v) = \widehat{k}(-\frac{v}{ \pi}) e^{\pm i x \cosh(\frac{v}{D} - \half \log(m/n))}.
\end{equation}
A detailed but routine computation shows for $|v| \leq D$ that
\begin{equation}
 f^{(j)}(v) \ll_{j,C} [1 + \frac{x}{D} (\frac{1}{R} + \frac{1}{D})]^j (1 + |v|)^{-C}.
\end{equation}
Thus by Lemma \ref{lemma:basicFourier}, the integral defining $H^{0}_{\pm}$ is very small unless
\begin{equation}
 \frac{S}{D} \ll_{\varepsilon} T^{\varepsilon}[1 + \frac{x}{D}(\frac{1}{D} + \frac{1}{R})]
\end{equation}
Since $S/D \geq T^{\eta}$, by taking $\varepsilon = \eta/2$, say, and recalling $D \gg R T^{-\eta}$, we conclude that 
$H_{\pm}^{0}(x)$ is very small unless $x \gg SR T^{-\varepsilon}$, which is equivalent to \eqref{eq:ctruncation}.

Now we find an alternate formula for $\phi(v)$ to give \eqref{eq:Hformula}.  We begin with the observation
\begin{equation}
 \cosh(\frac{v}{D} - \half \log(m/n)) = \half (\sqrt{\frac{m}{n}} + \sqrt{\frac{n}{m}}) \cosh(v/D) - \half (\sqrt{\frac{m}{n}} - \sqrt{\frac{n}{m}}) \sinh(v/D).
\end{equation}
Since $x = 4 \pi \sqrt{mn}/c$, we have
\begin{equation}
 x \cosh(\frac{v}{D} - \half \log(m/n))= \frac{2\pi (m+n)}{c} \cosh(v/D) - \frac{2\pi (m-n)}{c} \sinh(v/D),
\end{equation}
which simplifies as
\begin{equation}
x \cosh(\frac{v}{D} - \half \log(m/n))= \frac{2 \pi m}{c} \exp(-v/D) + \frac{2 \pi n}{c} \exp(v/D).
\end{equation}
Thus
\begin{equation}
 \phi(v) = 2 \frac{S}{D} v \pm (\frac{2 \pi m}{c} \exp(-v/D) + \frac{2 \pi n}{c} \exp(v/D)).
\end{equation}
We conclude that
\begin{equation}
 H(x) = \sum_{\pm}  S \int_{|v| \leq T^{\varepsilon}} \widehat{k}(-\frac{v}{2 \pi}) e^{2i \frac{S}{D} v} \e{\pm m e^{-v/D}}{c} \e{\pm n e^{v/D}}{c} dv
+ O(T^{-400}).
\end{equation}
This is what we wanted to prove.
\end{proof}

Now we continue with our proof of Theorem \ref{thm:separationofvars}.  We apply Lemma \ref{lemma:Kmncalc} to \eqref{eq:MRQZl55}.  
Write $\mathcal{M}_0$ for the diagonal term contribution, $\mathcal{M}_1$ for the contribution from the sum of Kloosterman sums, and $\mathcal{E}$ for the error terms, i.e., $\mathcal{M} = \mathcal{M}_0 + \mathcal{M}_1 + \mathcal{E}$.  The trivial bound gives
\begin{equation}
 \mathcal{M}_0(R, S, D, Q;a_n) \ll R SD \sum_{n \leq M} |a_m|^2,
\end{equation}
which is satisfactory for Theorem \ref{thm:separationofvars}.  Furthermore,
\begin{equation}
\mathcal{E} \ll \sum_{m \leq M} RSD |a_m|^2
\end{equation}
using Cauchy's inequality, since $M \leq T^{100}$, which is also acceptable for Theorem \ref{thm:separationofvars}.

For the sum of Kloosterman sums, we rewrite $\widehat{g}(\frac{R}{2 \pi} \log\frac{n}{m})$ as an integral, open the Kloosterman sum, and insert absolute values to obtain the following
\begin{multline}
 \mathcal{M}_1(R,S,D,Q, M;a_n) \ll S \sum_{\pm} \intR \int_{|v| \leq T^{\varepsilon}} g(t/R)  |\widehat{k}(-\frac{v}{2\pi})| \sum_{c \leq \frac{M T^{\varepsilon}}{S R}} \frac{1}{c} \sumstar_{r \shortmod{c}} 
\\
\Big|\sum_{m \leq M} \overline{a_m} m^{it} \e{rm}{c} \e{\pm m e^{-v/D}}{c}  \Big|
\Big|\sum_{n \leq M} a_n n^{-it} \e{\overline{r}m}{c} \e{\pm n e^{v/D}}{c}  \Big| dv dt.
\end{multline}
Next we apply the Cauchy-Schwartz inequality and perform some simplifications, in particular writing $|\widehat{k}(-\frac{v}{2 \pi})| \ll 1$ for $|v| \leq T^{\varepsilon}$, to get
\begin{multline}
\label{eq:M1bound55}
 \mathcal{M}_1(R,S,D,Q;a_n) \ll S \sum_{\pm} \intR \int_{|v| \leq T^{\varepsilon}}  g(t/R)  \sum_{c \leq \frac{M T^{\varepsilon}}{S R}} \frac{1}{c} \sumstar_{r \shortmod{c}} 
\\
\Big|\sum_{n \leq M} a_n n^{-it} \e{(r \pm 1)n}{c} \e{\pm n (e^{v/D}-1)}{c}  \Big|^2 dv dt.
\end{multline}

The $v$-integral in \eqref{eq:M1bound55} is set up to apply Lemma \ref{lemma:linear}, effectively replacing $\e{\pm n(e^{v/D} - 1)}{c}$ by $\e{\pm n v}{cD}$ with a very small error term.  That is, after some simple manipulations we have
\begin{multline}
\label{eq:M1bound66}
 \mathcal{M}_1(R,S,D,Q;a_n) \ll S \sum_{\pm} \intR \intR  g(t/R) g(v/T^{2\varepsilon}) \sum_{c \leq \frac{M T^{\varepsilon}}{S R}} \frac{1}{c} \sumstar_{r \shortmod{c}} 
\\
\Big|\sum_{n \leq M} a_n n^{-it} \e{(r \pm 1)n}{c} \e{n v}{Dc}  \Big|^2 dv dt + O(T^{-100} \sum_{n \leq M} |a_n|^2).
\end{multline}
Next we apply Lemma \ref{lemma:conversion} to convert the $n^{-it}$ twist by an additive twist.  In this way we obtain, with $\alpha_n = a_n \e{(r \pm 1)n}{c}$,
\begin{multline}
\label{eq:conversionSection6}
 \intR \intR  g(t/R) g(v/T^{2\varepsilon}) \Big|\sum_{n \leq M} \alpha_n n^{-it} \e{nv}{Dc}\Big|^2 dv dt 
\\
\ll \int_{v} \int_{y \ll R} g(v/T^{2\varepsilon}) \Big|\sum_{n \leq M}  \alpha_n  e(\frac{nv}{cD} + \frac{ny}{M})\Big|^2 dy dv + O(R^{-100} T^{\varepsilon} \sum_{n \leq M} |a_n|^2). 
\end{multline}
Then we change variables $v \rightarrow v - \frac{ycD}{M}$ and replace the ranges of integration by $y \ll R$ and $v \ll T^{\varepsilon} + \frac{RcD}{M} \leq 2T^{\varepsilon}$, using \eqref{eq:ctruncation} and the fact that $D \ll S$.  Thus
the quantity on the right hand side of \eqref{eq:conversionSection6} is
\begin{equation}
 \ll R \int_{|v| \ll T^{\varepsilon}} |\sum_{n \leq M} \alpha_n \e{nv}{cD}|^2 dv 
+
O(R^{-100} T^{\varepsilon} \sum_{n \leq M} |a_n|^2).
\end{equation}
This procedure effectively removes the $t$-integral from the right hand side of \eqref{eq:M1bound66}.

Write $(r \pm 1, c) = a$ and change variables $c = ab$, $r = \mp 1 + au$ where $u$ runs modulo $b$ such that $(u,b) = 1$ and $(au \mp 1, b) = 1$.  By positivity we drop this latter condition. Simplifying completes the proof of Theorem \ref{thm:separationofvars}.

\section{The large sieve}
\label{section:largesieve}
With Theorem \ref{thm:separationofvars} combined with the large sieve (Lemma \ref{lemma:largesieve}) we are able to make significant progress on bounding $\mathcal{M}(R,S,D,Q, M;a_n)$.  We first make a small simplification and set some notation.  For arbitrary complex numbers $b_n$, let 
\begin{equation}
\label{eq:MABdef}
\mathcal{M}_{A, B}(R,S, D, Q, M;b_n) = \frac{R S}{B} \intR g(\frac{v}{T^{\varepsilon}})  \sum_{b \asymp B} \medspace \sumstar_{r \shortmod{b}} \Big|\sum_{n \leq M} b_n  \e{rn}{b} \e{vn }{AB D} \Big|^2 dv.
\end{equation}
With this notation, we claim that Theorem \ref{thm:separationofvars} reads
\begin{equation}
\label{eq:thm6.5variant}
\mathcal{M}(R,S, D, Q, M; a_n) \ll R SD  \sum_{n \leq M} |a_n|^2 
+     \sum_{A, B} \mathcal{M}_{A, B}(R,S, D, Q, M;a_n),
\end{equation}
where the sum over $A$ and $B$ is over powers of $2$, say, such that
\begin{equation}
\label{eq:ABtruncation}
AB \ll \frac{M T^{\varepsilon}}{SR}.
\end{equation}
This is immediate after changing variables $v \rightarrow \frac{ab}{AB} v$, extending $v$ by positivity to say $8 T^{\varepsilon}$, and summing trivially over $a \asymp A$.

\begin{mylemma}
For any complex numbers $b_n$, we have 
\begin{equation}
 \mathcal{M}_{A, B}(R,S, D, Q, M;b_n) \ll (RSB + RDSA) T^{\varepsilon} \sum_{n \leq M} |b_n|^2.
\end{equation}
\end{mylemma}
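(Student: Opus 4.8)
The plan is to recognize the right-hand side of \eqref{eq:MABdef} as being exactly in the shape of the additive-character large sieve inequality of Lemma \ref{lemma:largesieve}, and to apply that inequality directly. Indeed, the object
\[
\sum_{b \asymp B}\;\sumstar_{r \shortmod{b}}\Big|\sum_{n \leq M} b_n \e{rn}{b}\e{vn}{ABD}\Big|^2
\]
matches $\sum_{b \leq B'}\sumstar_{x\shortmod{b}}|\sum_{n\leq N} a_n\e{xn}{b}\e{yn}{C}|^2$ with $N=M$, $B'\asymp B$ (since $b\asymp B$ forces $b\leq 2B$, say), $C = ABD$, and with the integration variable $y$ identified with $v$ (note $\e{vn}{ABD}=e(vn/(ABD))$).

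First I would dispose of the weight $g(v/T^{\varepsilon})$. Since $g$ is Schwartz, the integrand is negligibly small for $|v|\gg T^{\varepsilon}$; more precisely, using $g(v/T^{\varepsilon})\ll_C (1+|v|/T^{\varepsilon})^{-C}$ and splitting the $v$-integral into the dyadic ranges $2^j T^{\varepsilon}<|v|\leq 2^{j+1}T^{\varepsilon}$ for $j\geq 0$, one reduces to bounding $\int_{-2^{j+1}T^{\varepsilon}}^{2^{j+1}T^{\varepsilon}}(\cdots)\,dv$ for each $j$, at the cost of a factor $2^{-jC}$ which is summable against the linear growth of the large-sieve bound in its integration-range parameter once $C\geq 2$.

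Applying Lemma \ref{lemma:largesieve} to the $j$th dyadic integral (with the rôle of ``$T$'' in that lemma played by $2^{j+1}T^{\varepsilon}$) gives a bound $\ll (B^2 2^{j+1}T^{\varepsilon} + ABD)\sum_{n\leq M}|b_n|^2$; summing over $j$ yields
\[
\intR g(v/T^{\varepsilon})\sum_{b\asymp B}\sumstar_{r\shortmod{b}}\Big|\sum_{n\leq M} b_n\e{rn}{b}\e{vn}{ABD}\Big|^2\,dv \ll (B^2 T^{\varepsilon}+ABD)\sum_{n\leq M}|b_n|^2.
\]
Multiplying by the prefactor $\tfrac{RS}{B}$ from \eqref{eq:MABdef} gives
\[
\mathcal{M}_{A,B}(R,S,D,Q,M;b_n)\ll \tfrac{RS}{B}\bigl(B^2 T^{\varepsilon}+ABD\bigr)\sum_{n\leq M}|b_n|^2 = \bigl(RSB\,T^{\varepsilon}+RSAD\bigr)\sum_{n\leq M}|b_n|^2,
\]
which is $\ll (RSB+RDSA)T^{\varepsilon}\sum_{n\leq M}|b_n|^2$ as claimed. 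There is essentially no genuine obstacle here: the lemma is an immediate corollary of Gallagher's inequality, the only point requiring (minor) care being the non-compact support of the weight $g$, which is handled by its rapid decay as above.
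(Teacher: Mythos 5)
Your proof is correct and is essentially the paper's own argument: the lemma follows by a direct application of the additive-character version of Gallagher's large sieve (Lemma \ref{lemma:largesieve}) with $C=ABD$ and integration range $\asymp T^{\varepsilon}$, then multiplying by the prefactor $RS/B$. The only difference is that you spell out the treatment of the tail of the Schwartz weight $g(v/T^{\varepsilon})$ via a dyadic decomposition, a point the paper regards as immediate.
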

\begin{proof}
 Applying the additive character version of Lemma \ref{lemma:largesieve}, we immediately have
\begin{equation}
 \mathcal{M}_{A, B}(R,S, D, Q, M;b_n) \ll \frac{RS}{AB} A (B^2 T^{\varepsilon} + ABD T^{\varepsilon}) \sum_{n \leq M} |b_n|^2. \qedhere
\end{equation}
\end{proof}
\begin{mycoro}
\label{coro:smallA}
 If $A \leq \frac{N}{RSD} T^{\varepsilon}$ then
\begin{equation}
\mathcal{M}_{A, B}(R,S, D, Q, M;a_n) \ll Q^{\half + \varepsilon} \sum_{n \leq M} |a_n|^2.
\end{equation}
\end{mycoro}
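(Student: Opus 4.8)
The plan is to read Corollary \ref{coro:smallA} directly off the preceding Lemma together with the truncation \eqref{eq:ABtruncation} on $AB$. That Lemma supplies the bound $\mathcal{M}_{A,B}(R,S,D,Q,M;a_n) \ll (RSB + RDSA)T^{\varepsilon}\sum_{n\leq M}|a_n|^2$, so it is enough to show that each of the two explicit quantities $RSB$ and $RDSA$ is $\ll Q^{\half}T^{\varepsilon}$ under the hypothesis $A \leq \frac{N}{RSD}T^{\varepsilon}$, where $N \asymp Q^{\half+\varepsilon}$.

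First I would dispose of the off-diagonal-type term $RDSA$: this is precisely what the hypothesis on $A$ is designed to control, since multiplying $A \leq \frac{N}{RSD}T^{\varepsilon}$ through by $RSD$ gives at once $RDSA \leq NT^{\varepsilon}$. Next I would bound the diagonal-type term $RSB$. Here the relevant input is \eqref{eq:ABtruncation}, which reads $AB \ll \frac{MT^{\varepsilon}}{SR}$; since the dyadic parameters $A$ run over powers of $2$ with $A \geq 1$, this yields $B \leq AB \ll \frac{MT^{\varepsilon}}{SR}$, hence $RSB \ll MT^{\varepsilon}$. Finally, because the partition of unity restricts $M$ to $1 \ll M \ll N$ (see \eqref{eq:MandMl}), we also have $RSB \ll NT^{\varepsilon}$.

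Combining the two estimates and recalling $N \asymp Q^{\half+\varepsilon}$ gives $\mathcal{M}_{A,B}(R,S,D,Q,M;a_n) \ll Q^{\half+\varepsilon}T^{\varepsilon}\sum_{n\leq M}|a_n|^2$; since $Q \gg T^2$ by \eqref{eq:RSDT'} one absorbs $T^{\varepsilon}$ into $Q^{\varepsilon}$ and relabels $\varepsilon$, which is the claimed bound. I do not expect any genuine obstacle: the corollary is really just the bookkeeping remark that in the \emph{small $A$} regime the two pieces produced by the large sieve are each already at the target size $Q^{\half}$, the term $RSB$ being controlled by the length $M$ of the Dirichlet polynomial via \eqref{eq:ABtruncation} and the term $RDSA$ by the assumed upper bound on $A$.
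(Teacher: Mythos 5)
Your proof is correct and follows exactly the paper's argument: insert the preceding large-sieve lemma's bound $(RSB+RDSA)T^{\varepsilon}$, control $RSB$ by $MT^{\varepsilon}\ll NT^{\varepsilon}$ via the truncation $AB\ll \frac{MT^{\varepsilon}}{SR}$, control $RDSA$ by $NT^{\varepsilon}$ via the hypothesis on $A$, and recall $N=Q^{\half+\varepsilon}$. This is precisely the "short calculation" the paper intends, with the minor bookkeeping ($A\geq 1$, absorbing $T^{\varepsilon}$ into $Q^{\varepsilon}$) spelled out.
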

\begin{proof}
 We recall $B \leq \frac{MT^{\varepsilon}}{SR}$, so a short calculation immediately gives the result, recalling $N = Q^{\half + \varepsilon}$.
\end{proof}

For ease of reference, recall that \eqref{eq:MandMl} gives the relation between our main quantity of interest, $\mathcal{M}(R,S,D,Q)$, and $\mathcal{M}(R,S,D,Q,M;a_n)$.
Unravelling the definitions, we have that the contribution to $\mathcal{M}(R,S,D,Q)$ from $A \leq \frac{N}{RSD}  T^{\varepsilon}$ is 
\begin{equation}
\ll T^{\varepsilon} \sup_{1 \ll M \ll N} \sum_{l \leq \sqrt{N/M}} l^{-1} (RSD +N) \sum_{n \leq M} \frac{|A_F(l,n)|^2}{n} \ll Q^{\half + \varepsilon} \sum_{l^2 n \leq N} \frac{|A_F(l,n)|^2}{ln}.
\end{equation}
Then recall the statement of Lemma \ref{lemma:Molteni}.

It is perhaps surprising how much progress one makes without using any special properties of the coefficients $a_n$.  Since the variable $a$ occurs as the greatest common divisor of two integers one might expect that $a=1$ is the most important case, but unfortunately larger values of $a$ are problematic and require new ideas.
For the complementary ranges of $A$, i.e. $A \geq \frac{N}{RSD} T^{-\varepsilon}$ we resorted to using the $GL_{3}$ Voronoi formula.  We will see that for such sizes of $A$ then the Voronoi formula is beneficial in the sense that the dual sum is shorter than the original sum.

\section{Applying the $GL_{3}$ Voronoi formula}
\label{section:Voronoi}
In this section we shall apply the $GL_{3}$ Voronoi formula to obtain some crucial additional savings when $A$ is relatively large.  
We begin by fixing some new notation.
We write \eqref{eq:MABdef} as
\begin{equation}
\label{eq:MABwrtS}
 \mathcal{M}_{A,B}(R, S, D, Q, M; a_n) = \frac{RS}{B}  \sum_{b \asymp B} \medspace \sumstar_{r \shortmod{b}} \mathcal{S}(b,r,v),
\end{equation}
where for brevity we have not displayed all the variables of $\mathcal{S}$, and
\begin{equation}
 \mathcal{S}(b,r,v) = \intR g(v/T^{\varepsilon}) M^{-1} \Big|\sum_{n \geq 1} A_F(l,n) \e{rn}{b} \eta(n) n^{-iT_0} \e{vn}{ABD} \Big|^2 dv, 
\end{equation}
where
\begin{equation}
 \eta(n) = \leg{n}{M}^{-\half} W(nl^2) \gamma(n/M).
\end{equation}
Notice that $\eta$ satisfies
\begin{equation}
\label{eq:etabound}
 x^j \eta^{(j)}(x) \ll_{j,C} (1 + \frac{x}{M})^{-C}.
\end{equation}
Now we apply Theorem \ref{thm:Voronoi} and Cauchy's inequality to $\mathcal{S}$.  We obtain
\begin{equation}
\label{eq:SafterVoronoi}
 \mathcal{S} \ll \sum_{\pm} \sum_{k \in \{0, 1\}} \intR g(v/T^{\varepsilon}) M^{-1} \Big|b \sum_{d | bl} \sum_{n \geq 1} \frac{A_F(n,d)}{dn} S(\pm \overline{r} l, n; \frac{bl}{d}) \Phi_{k}(\frac{nd^2}{b^3 l}, v)\Big|^2 dv.
\end{equation}
Here
\begin{equation}
\label{eq:Phixvdef}
 \Phi_{k}(x,v) = \frac{1}{2 \pi i} \int_{(\sigma)}  (\pi^2 x)^{-s} 
\frac{\Gamma(\frac{1+ \sigma + it + i\alpha+k}{2})}{\Gamma(\frac{- \sigma - it - i\alpha+k}{2})} 
\frac{\Gamma(\frac{1+ \sigma + it + i \beta+k}{2})}{\Gamma(\frac{- \sigma - it-i\beta+k}{2})} 
\frac{\Gamma(\frac{1+ \sigma + it + i\gamma+k}{2})}{\Gamma(\frac{- \sigma - it - i\gamma+k}{2})}
\widetilde{\phi}_{T_0}(-s)
 ds,
\end{equation}
where $s = \sigma + it$, and
\begin{equation}
\label{eq:phitildedef}
 \widetilde{\phi}_{T_0}(-\sigma - it) = \int_0^{\infty} \eta(x) x^{-iT_0} e^{\frac{2 \pi i vx}{ABD}} x^{-\sigma - it} \frac{dx}{x}.
\end{equation}
The cases $k=0$ and $k=1$ are very similar.  
Set  
\begin{equation}
\label{eq:Udef}
 U = \frac{M}{ABD}.
\end{equation}

\begin{mylemma}
\label{lemma:Phiproperties}
Let $V = \alpha-\beta$ if $T_0 = \alpha$ or $T_0 = \beta$, and $V= T$ if $T_0 = \gamma$.  Then
we have the bound for sufficiently large $\sigma > 0$
\begin{equation}
\label{eq:Phitruncation}
\Phi_k(x,v) \ll_{\sigma, \varepsilon} \leg{U(U + T)(U+V) T^{\varepsilon}}{xM}^{\sigma}.
\end{equation}
Furthermore, suppose that $b_m$ is an arbitrary finite sequence of complex numbers, and that $g$ is a nonnegative smooth function with compactly-supported Fourier transform.  Then with $\widetilde{\phi}$ given by \eqref{eq:phitildedef}, and any real $c > 0$, we have
\begin{multline}
\label{eq:Phibilinear}
\intR g\leg{v}{T^{\varepsilon}} | \sum_{m \geq 1} b_m \Phi_k(\frac{m}{c}, v)|^2 dv \ll  \frac{M T^{\varepsilon}}{U} \int_{|t| \leq T^{\varepsilon} U} |\sum_{m \geq 1} b_m m^{iT_0} \sqrt{\frac{m}{c}} m^{it}|^2 dt 
\\
+ T^{-100} \sum_{m \geq 1} |b_m|^2.
\end{multline}
\end{mylemma}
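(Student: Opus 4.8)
The plan is to exploit the Mellin--Barnes representation \eqref{eq:Phixvdef}--\eqref{eq:phitildedef} on \emph{two} different vertical lines: a far-right line $\mathrm{Re}(s)=\sigma$ with $\sigma$ a large fixed constant for the pointwise estimate \eqref{eq:Phitruncation}, and the line $\mathrm{Re}(s)=-\tfrac12$ for the bilinear estimate \eqref{eq:Phibilinear}. Throughout one uses that $\eta$ is smooth, supported on $x\asymp M$, with $x^j\eta^{(j)}(x)\ll_j 1$, so that $\widetilde\phi_{T_0}(-s)$ from \eqref{eq:phitildedef} is entire and, since $e^{2\pi i vx/(ABD)}$ contributes a phase of size $\asymp vU\ll UT^\varepsilon$ on $\mathrm{supp}\,\eta$ (recall $|v|\ll T^\varepsilon$ and \eqref{eq:Udef}), repeated integration by parts gives $\widetilde\phi_{T_0}(-s)\ll_{j}M^{-\mathrm{Re}(s)}(1+|\mathrm{Im}(s)+T_0-2\pi vU|)^{-j}$. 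For \eqref{eq:Phitruncation} I would move the contour to $\mathrm{Re}(s)=\sigma$ (no poles of the numerator Gammas, which lie on $\mathrm{Re}(s)\le-1$, are crossed), use Stirling in the form
\[
\prod_{\nu\in\{\alpha,\beta,\gamma\}}\Bigl|\frac{\Gamma(\tfrac{1+s+i\nu+k}{2})}{\Gamma(\tfrac{-s-i\nu+k}{2})}\Bigr|\;\asymp\;\prod_{\nu}(1+|t+\nu|)^{\sigma+\frac12},\qquad s=\sigma+it,
\]
and note that the bound on $\widetilde\phi_{T_0}$ forces $|t+T_0|\ll UT^\varepsilon$; on that range, since $T_0\in\{\alpha,\beta,\gamma\}$, $\alpha-\gamma\asymp T$, and the remaining gap is $\asymp V$, one has $\prod_\nu(1+|t+\nu|)\ll U(U+V)(U+T)T^{O(\varepsilon)}$. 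Combining this with the $x^{-\sigma}M^{-\sigma}$ from $(\pi^2x)^{-s}$ and integrating trivially in $t$ gives \eqref{eq:Phitruncation}, the half-integer in the Stirling exponent and the crude bound $U(U+V)(U+T)\ll T^{O(1)}$ being absorbed into $T^\varepsilon$ once $\sigma$ is large in terms of $\varepsilon$.

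The substance is \eqref{eq:Phibilinear}, and the key idea is that on $\mathrm{Re}(s)=-\tfrac12$ each factor $\Gamma(\tfrac{1+s+i\nu+k}{2})/\Gamma(\tfrac{-s-i\nu+k}{2})$ is a quotient of \emph{complex-conjugate} Gamma values and hence has modulus exactly $1$. After first invoking \eqref{eq:Phitruncation} to discard, up to $O(T^{-100}\sum_m|b_m|^2)$, the tail $m\gg cU(U+V)(U+T)T^\varepsilon/M$, I would shift the contour in \eqref{eq:Phixvdef} to $\mathrm{Re}(s)=-\tfrac12$, relabel $s\mapsto s-iT_0$ so that $\widetilde\phi_{T_0}$ becomes $\widetilde\psi_v(-s)=\int_0^\infty\eta(y)e(vy/(ABD))y^{-s}\tfrac{dy}{y}$, and factor out the resulting $m^{iT_0}\sqrt{m/c}$, obtaining
\[
\Phi_k(m/c,v)=m^{iT_0}\sqrt{m/c}\int_{\mathbb R}(m/c)^{-it}\,\omega(t,v)\,dt,\qquad |\omega(t,v)|\ll|\widetilde\psi_v(\tfrac12-it)|,
\]
the Gamma quotient contributing only a phase. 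Integration by parts (as above) gives $|\omega(t,v)|\ll_j M^{1/2}(1+|t-2\pi vU|)^{-j}$, so for $|v|\ll T^\varepsilon$ the weight $\omega(\cdot,v)$ is concentrated in $|t|\ll UT^\varepsilon$, with $\sup_v\int_{\mathbb R}|\omega(t,v)|\,dt\ll M^{1/2}T^\varepsilon$ and $\sup_t\int_{\mathbb R}g(v/T^\varepsilon)|\omega(t,v)|\,dv\ll M^{1/2}T^\varepsilon/U$.

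Inserting the displayed formula for $\Phi_k$ into the left side of \eqref{eq:Phibilinear}, interchanging the $m$-sum with the $t$-integral, applying the Cauchy--Schwarz inequality in $t$ against the measure $|\omega(t,v)|\,dt$, integrating in $v$, and separating off the super-polynomially small range $|t|\gg UT^\varepsilon$ (whose contribution is absorbed into the $O(T^{-100}\sum_m|b_m|^2)$ term, using $M\le T^{100}$ and $U\gg1$), one arrives at a bound
\[
\Bigl(\sup_v\int_{\mathbb R}|\omega(t,v)|\,dt\Bigr)\Bigl(\sup_t\int_{\mathbb R}g(v/T^\varepsilon)|\omega(t,v)|\,dv\Bigr)\int_{|t|\le T^\varepsilon U}\Bigl|\sum_{m}b_m m^{iT_0}\sqrt{m/c}\,m^{it}\Bigr|^2 dt,
\]
and the two suprema multiply to $MT^{O(\varepsilon)}/U$, which is \eqref{eq:Phibilinear} after renaming $\varepsilon$ and the harmless relabelling $t\mapsto-t$.

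The main obstacle is conceptual rather than computational: recognising that the line $\mathrm{Re}(s)=-\tfrac12$ turns the otherwise large oscillatory Gamma quotient into a pure phase, so that no stationary-phase analysis of the $GL_3$ Bessel kernel is required (the ``soft'' approach advertised in the introduction). The remaining work --- tracking the localisations of $\widetilde\psi_v$ separately in the $t$- and $v$-aspects, and bounding the error terms --- is routine, of the same Fourier-analytic flavour as Lemmas \ref{lemma:conversion} and \ref{lemma:linear}, but one must keep in mind that \eqref{eq:Phitruncation} genuinely needs the far-right contour while \eqref{eq:Phibilinear} needs the $\mathrm{Re}(s)=-\tfrac12$ contour.
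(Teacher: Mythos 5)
Your treatment of \eqref{eq:Phitruncation} is essentially the paper's (shift $s\to s-iT_0$, crude bound $M^{-\sigma}$ for the Mellin transform, rapid decay for $|t|\gg UT^{\varepsilon}$, Stirling), and your observation that the Gamma quotient has modulus exactly $1$ on $\mathrm{Re}(s)=-\tfrac12$ is indeed the paper's key fact. The gap is in the mechanism you use for \eqref{eq:Phibilinear}: the asserted localization $|\omega(t,v)|\ll_j M^{1/2}(1+|t-2\pi vU|)^{-j}$ is false. Writing $u=Mw$ with $w\asymp 1$, the integral defining $\widetilde{\phi}(\tfrac12-it)$ has phase $2\pi vUw-t\log w$, whose derivative $2\pi vU-t/w$ vanishes for some $w$ in the support whenever $t/(2\pi vU)$ lies in the dyadic support interval; so for fixed $v$ the weight $\omega(\cdot,v)$ is genuinely spread over an interval of length $\asymp |v|U$ (where, absent stationary phase, one only has the trivial bound $M^{1/2}$), not concentrated within $O(1)$ of $2\pi vU$. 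With the correct localization your Schur-type suprema become $\sup_v\int|\omega(t,v)|\,dt\ll M^{1/2}UT^{\varepsilon}$ and $\sup_t\int g(v/T^{\varepsilon})|\omega(t,v)|\,dv\ll M^{1/2}T^{\varepsilon}$, so their product is about $MUT^{2\varepsilon}$ (even allowing the stationary-phase/van der Corput saving you wish to avoid, the product is still about $MT^{\varepsilon}$), whereas the lemma requires $MT^{\varepsilon}/U$. Since $U=M/(ABD)$ is a positive power of $T$ in the regime where the lemma is applied (that $1/U$ is precisely the saving that makes the Voronoi step profitable in Section \ref{section:largesieve2}), this loss is fatal. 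The underlying issue is that \eqref{eq:Phibilinear} encodes square-root cancellation in the $t$-integral for fixed $v$, and a Cauchy--Schwarz against the measure $|\omega(t,v)|\,dt$ discards exactly that cancellation.

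The paper extracts the factor $1/U$ not from pointwise decay of the kernel but from a double orthogonality argument in the spirit of Lemma \ref{lemma:conversion}: open the square, perform the $v$-integral first so that $\widehat{g}$ forces $|u_1-u_2|\ll M/(UT^{\varepsilon})$ (the measure of this near-diagonal is the source of $M/U$), apply $|XY|\le\tfrac12(|X|^2+|Y|^2)$ and integrate trivially in the free variables, then repeat the same maneuver with the $u$-integral to force $|t_1-t_2|\ll T^{\varepsilon}$, and only at the very end invoke $|G(t)|=1$ on $\mathrm{Re}(s)=-\tfrac12$. To repair your argument you would need to replace the pointwise/Schur step by such a Plancherel-type use of the $v$- and $u$-averages; your first part and the choice of the $-\tfrac12$ line can be kept as is.
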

The pleasant feature of this Lemma is that we avoided a difficult asymptotic analysis of the complicated function $\Phi_k$.  The method of proof can be applied in many other situations.

\begin{proof}
We first prove \eqref{eq:Phitruncation}.  Choose $\sigma > 0$ very large compared to $\varepsilon$, and change variables $s \rightarrow s - iT_0$ in the definition \eqref{eq:Phixvdef}.  Notice that $\widetilde{\phi}_{T_0}(-\sigma - i(t-T_0)) =:\widetilde{\phi}(-\sigma - it)$ does not depend on $T_0$; indeed,
\begin{equation}
 \widetilde{\phi}(-\sigma - it) =  \int_0^{\infty} \eta(x) e^{\frac{2 \pi i vx}{ABD}} x^{-\sigma - it} \frac{dx}{x}.
\end{equation}
First note the very crude bound $\widetilde{\phi}(-\sigma - it) \ll M^{-\sigma}$ and Stirling's approximation
\begin{multline}
\left| \frac{\Gamma(\frac{1+ \sigma + it + i(\alpha-T_0)+k}{2})}{\Gamma(\frac{- \sigma - it - i(\alpha-T_0)+k}{2})} 
\frac{\Gamma(\frac{1+ \sigma + it + i (\beta-T_0)+k}{2})}{\Gamma(\frac{- \sigma - it-i(\beta-T_0) +k}{2})} 
\frac{\Gamma(\frac{1+ \sigma + it + i(\gamma-T_0)+k}{2})}{\Gamma(\frac{- \sigma - it - i(\gamma-T_0)+k}{2})}
 \right| 
\\
\ll (1 + |t+(\alpha-T_0)|)^{\half + \sigma} (1 + |t+(\beta-T_0)|)^{\half + \sigma} (1 + |t+(\gamma-T_0)|)^{\half + \sigma}.
\end{multline}
Next, we note that if $|t| \geq U T^{\varepsilon}$ then integration by parts shows that $\widetilde{\phi}(-\sigma - it) \ll_C M^{-\sigma} |t|^{-C}$ for $C > 0$ arbitrarily large.  Since $\alpha - \gamma \asymp \beta - \gamma \asymp T$, we have for all three choices of $T_0$ that
\begin{equation}
\Phi_k(x,v) \ll (xM)^{-\sigma} \intR \frac{(1 + |t|)^{\half + \sigma} (1 + |t| + V)^{\half + \sigma} (1 + |t| + T)^{\half + \sigma}}{(1 + \frac{|t|}{U T^{\varepsilon}})^C} dt,
\end{equation}
which directly gives
\begin{equation}
\Phi_k(x,v) \ll T^{\varepsilon} U^{3/2}(U + T)^{1/2} (U + V)^{1/2} \leg{U(U+T)(U + V)}{xM}^{\sigma}.
\end{equation}
Choosing $\sigma$ large enough compared to $\varepsilon$ gives \eqref{eq:Phitruncation}.

Next we prove \eqref{eq:Phibilinear}.  One could attempt to prove this by finding an asymptotic expansion of $\widetilde{\phi}$, then applying the asymptotic form of Stirling's approximation, opening the square and analyzing the triple integral with methods of oscillatory integrals.  This is feasible, but it is very complicated, so it is extremely nice that there is a simpler method presented below.  It is reminiscent of 
the calculation of the magnitude of a Gauss sum by computing its modulus squared; of course, the magnitude is much easier to calculate than the argument.  The proof follows the same lines as in Lemma \ref{lemma:conversion}.

For the rest of the proof we fix $\sigma = -\half$.  Define
\begin{equation}
G(t) = \frac{1}{2 \pi} \pi^{-\frac32 -3 \sigma - 3it} 
\frac{\Gamma(\frac{1+ \sigma + it + i(\alpha-T_0) +k}{2})}{\Gamma(\frac{- \sigma - it - i(\alpha-T_0) + k}{2})} 
\frac{\Gamma(\frac{1+ \sigma + it + i(\beta-T_0) +k}{2})}{\Gamma(\frac{- \sigma - it - i(\beta-T_0) + k}{2})} 
\frac{\Gamma(\frac{1+ \sigma + it + i(\gamma-T_0) +k}{2})}{\Gamma(\frac{- \sigma - it - i(\gamma-T_0) + k}{2})} .
\end{equation}
Similarly, let
\begin{equation}
H(u) = \leg{u}{M}^{-\sigma} \eta(u).
\end{equation}
Then with these notations we have
\begin{equation}
\Phi_k(x,v) = x^{-\sigma+iT_0} \int_{|t| \leq UT^{\varepsilon}} \widetilde{\phi}(-\sigma - it) x^{-it} G(t) dt + O(T^{-200}),
\end{equation}
the $t$-truncation coming from the rapid decay of $\widetilde{\phi}$, and
\begin{equation}
\widetilde{\phi}(-\sigma - it) = M^{-\sigma} \int_0^{\infty} H(u) u^{-it} \e{vu}{ABD} \frac{du}{u}.
\end{equation}
Note that $H$ satisfies \eqref{eq:etabound}.  Let $J$ be the left hand side of \eqref{eq:Phibilinear}, and write $J =J_1 + (\text{error})$, where this acceptable error comes from the $t$-truncation.  Then
\begin{multline}
J_1= \sum_{m,n \geq 1} b_m \overline{b_n} \leg{m}{c}^{-\sigma + iT_0} \leg{n}{c}^{-\sigma - iT_0} M^{-2\sigma} \int_{-T^{\varepsilon} U}^{T^{\varepsilon} U} \int_{-T^{\varepsilon} U}^{T^{\varepsilon} U} \leg{m}{c}^{-i t_1} \leg{n}{c}^{it_2} G(t_1) \overline{G(t_2)} 
\\
\int_0^{\infty} \int_{0}^{\infty} H(u_1) \overline{H(u_2)} u_1^{-it_1} u_2^{it_2} 
\intR g(\frac{v}{T^{\varepsilon}}) \e{v(u_1 - u_2)}{ABD} dv \frac{du_1 du_2}{u_1 u_2} dt_1 dt_2.
\end{multline}
This inner $v$-integral is $T^{\varepsilon} \widehat{g}(\frac{T^{\varepsilon}(u_2-u_1)}{ ABD})$, which is zero unless $|u_1 - u_2| \ll ABD T^{-\varepsilon} \asymp \frac{M}{U T^{\varepsilon}}$, recalling \eqref{eq:Udef}. For reference, $u_1, u_2$ are of size $M$ by the support of $H$.  Having imposed this condition, we move the $v$, $u_1$, and $u_2$-integrals to the outside, getting
\begin{multline}
J_1= \intR g(\frac{v}{T^{\varepsilon}}) \mathop{\int \int}_{|u_1 - u_2| \ll \frac{M}{UT^{\varepsilon}}} H(u_1) \overline{H(u_2)} \e{v(u_1 - u_2)}{ABD}
M^{-2\sigma}  
\sum_{m,n \geq 1} b_m \overline{b_n} \leg{m}{c}^{-\sigma + iT_0} 
\\
\leg{n}{c}^{-\sigma - iT_0} 
\int_{-T^{\varepsilon} U}^{T^{\varepsilon} U} \int_{-T^{\varepsilon} U}^{T^{\varepsilon} U} \leg{m}{c}^{-i t_1} \leg{n}{c}^{it_2} G(t_1) \overline{G(t_2)} 
 u_1^{-it_1} u_2^{it_2} 
  dt_1 dt_2 dv \frac{du_1 du_2}{u_1 u_2}.
\end{multline}
We write this in the form
\begin{equation}
|J_1| \leq \int_v \int_{u_1} \int_{u_2}  \Big|\sum_m \int_{t_1}  \Big| \Big|\sum_n  \int_{t_2} \Big|,
\end{equation}
and apply the inequality $|X| |Y| \leq \half (|X|^2 + |Y|^2)$.  Both terms lead to the same expression by symmetry.  Integrating trivially over $v$ and the $u_i$ not occuring inside the square, we then obtain
\begin{equation}
 |J_1| \ll \frac{M}{U} M^{-2\sigma} \int_u |H(u)|^2 \Big|\sum_{m \geq 1} b_m \leg{m}{c}^{-\sigma + iT_0} \int_{-T^{\varepsilon} U}^{T^{\varepsilon} U} \leg{m}{c}^{-it} G(t) u^{-it} dt \Big|^2 \frac{du}{u^2}. 
\end{equation}

We now have one fewer integral sign inside the square, compared to the original definition.  Our next step is to do the same procedure to eliminate the $t$-integral on the inside.  Opening up the square again, we have
\begin{multline}
\label{eq:J1lastbound}
|J_1| \ll \frac{M}{U}  M^{-2\sigma}  \sum_{m,n \geq 1} b_m \overline{b_n} \leg{m}{c}^{-\sigma + iT_0} \leg{n}{c}^{-\sigma - iT_0} \int_{t_1} \int_{t_2} \leg{m}{c}^{-it_1} \leg{n}{c}^{it_2} G(t_1) \overline{G(t_2)} 
\\
\intR |H(u)|^2 u^{-it_1 + it_2-1} \frac{du}{u} dt_1 dt_2.
\end{multline}
Integration by parts shows that the inner $u$-integral is very small unless $|t_1 - t_2| \leq T^{\varepsilon}$.  According to this truncation, write the right hand side of \eqref{eq:J1lastbound} as $J_2 + (\text{error})$, where the error is acceptable for the proof.  Having imposed this condition, move the $u$, $t_1$, and $t_2$-integrals to the outside and put in absolute value signs as follows:
\begin{multline}
 J_2 \leq \frac{M}{U} M^{-2\sigma} \intR |H(u)|^2 u^{-1} \mathop{\int \int}_{|t_1 - t_2| \leq T^{\varepsilon}} 
\\
\Big|\sum_{m \geq 1} b_m \leg{m}{c}^{-\sigma + iT_0 - it_1} G(t_1)\Big| \Big|
\sum_{n \geq 1}  b_n  \leg{n}{c}^{-\sigma + iT_0-it_2}  G(t_2)\Big| dt_1 dt_2 \frac{du}{u}. 
\end{multline}
As in the above treatment of $J_1$, we use Cauchy-Schwartz on the triple integral, giving
\begin{equation}
 J_2 \ll \frac{M}{U} M^{-2\sigma} \intR |H(u)|^2 u^{-1} \mathop{\int \int}_{|t_1 - t_2| \leq T^{\varepsilon}} \Big|\sum_{m \geq 1} b_m \leg{m}{c}^{-\sigma + iT_0 - it_1} G(t_1)\Big|^2 dt_1 dt_2 \frac{du}{u}.
\end{equation}
We bound the $u$- and $t_2$-integrals trivially, getting
\begin{equation}
 J_2 \ll \frac{T^{\varepsilon}}{U} M^{-2\sigma} \int_{|t| \leq T^{\varepsilon} U} |G(t)|^2  \Big|\sum_{m \geq 1} b_m \leg{m}{c}^{-\sigma + iT_0 - it} \Big|^2 dt.
\end{equation}
Note the wonderful fact that $|G(y)|^2 = 1$ for $\sigma = -\half$! Thus we get
\begin{equation}
J_2 \ll  \frac{M T^{\varepsilon}}{U} \int_{|t| \leq T^{\varepsilon}U} \Big| \sum_{m \geq 1} b_m m^{iT_0} \sqrt\frac{m}{c} m^{it} \Big|^2 dt.
\end{equation}
This is what we wanted to prove.
\end{proof}

\section{Reduction to the large sieve}
\label{section:largesieve2}
In view of \eqref{eq:MandMl} and \eqref{eq:thm6.5variant}, write
\begin{equation}
 \mathcal{P}_{A,B}(R,S,D,M) = \sum_{l \leq \sqrt{N}} \frac{1}{l} M_{A,B}(R,S, D, Q, M;a_n),
\end{equation}
where recall the definition \eqref{eq:MABdef}.
\begin{mylemma}
\label{lemma:bigA}
 Suppose $A \geq \frac{N}{RSD} T^{-\varepsilon}$ where \eqref{eq:RDST} holds.  Then
\begin{equation}
 \mathcal{P}_{A,B}(R, S, D, M) \ll Q^{\half + \varepsilon} |A_F(1,1)|^2.
\end{equation}
\end{mylemma}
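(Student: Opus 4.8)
The plan is to exploit the $GL_{3}$ Voronoi formula (Theorem~\ref{thm:Voronoi}): the hypothesis $A \geq \frac{N}{RSD}T^{-\varepsilon}$ is exactly the regime in which $U = \frac{M}{ABD}$ is small, so the dual sum produced by Voronoi is shorter than the original $n$-sum of length $M$, and Gallagher's large sieve (Lemma~\ref{lemma:largesieve}) applied to the dualized bilinear form then beats the convexity bound. Throughout we may assume \eqref{eq:RSDT'} (by the Remark after Lemma~\ref{lemma:Dsize}); if $R \ll T^{\varepsilon}$ we are done by Lemma~\ref{lemma:Iwanieclargesieve}, so we assume $R \gg T^{\varepsilon}$, hence $\alpha - \beta \gg T^{\varepsilon}$. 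For the range of $A$ to be non-empty we must also have $M \gg N D^{-1} T^{-\varepsilon}$.

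First, for each $l \leq \sqrt{N}$ and each $b \asymp B$ and $r$ with $(r,b)=1$, apply Theorem~\ref{thm:Voronoi} to $\mathcal{S}(b,r,v)$ (with $m = l$, $c = b$), and then Cauchy's inequality over the $O(T^{\varepsilon})$ divisors $d \mid bl$, reaching \eqref{eq:SafterVoronoi}. By the truncation \eqref{eq:Phitruncation} of Lemma~\ref{lemma:Phiproperties}, the dual $n$-sum may be restricted (up to a negligible tail) to
\[
 n \leq N^{*} \asymp \frac{B^{3} l}{d^{2}} \cdot \frac{U(U+T)(U+V)}{M}\, T^{\varepsilon}, \qquad U = \frac{M}{ABD},
\]
where $V = \alpha - \beta$ if $T_{0} \in \{\alpha,\beta\}$ and $V = T$ if $T_{0} = \gamma$. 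Next apply the bilinear bound \eqref{eq:Phibilinear} with $c = b^{3}l/d^{2}$; since the factor $\sqrt{n/c}$ there combines with $A_{F}(n,d)/(dn)$ into $A_{F}(n,d)\, n^{-1/2} b^{-3/2} l^{-1/2}$, this trades $\Phi_{k}$ for a Dirichlet polynomial over the short range $|t| \leq T^{\varepsilon} U$, leaving (up to acceptable errors)
\[
 \mathcal{M}_{A,B} \ll \frac{RS\, T^{\varepsilon}}{U B^{2} l} \sum_{d} \sum_{b \asymp B} \; \sumstar_{r \shortmod b} \; \int_{|t| \leq T^{\varepsilon} U} \Big| \sum_{n \leq N^{*}} \frac{A_{F}(n,d)}{\sqrt{n}}\, S(l\bar{r}, \pm n; bl/d)\, n^{i(T_{0}+t)} \Big|^{2} dt .
\]

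To estimate the inner sum over $b$, $r$ and the $t$-integral I would split the $n$-range dyadically, apply Lemma~\ref{lemma:conversion} to replace $n^{it}$ by an additive character $e(ny/N_{0})$ with $|y| \ll T^{\varepsilon} U$, open the Kloosterman sums $S(l\bar{r}, \pm n; bl/d)$, separate by the Chinese Remainder Theorem the factor of the modulus dividing $l$ (producing a Ramanujan sum, harmless on average), use that each reduced residue to the remaining modulus $\asymp B/d$ is hit with multiplicity $\asymp \phi(b)/\phi(b/d) \asymp d$, and apply the single-modulus large sieve in the residue variable $r$. What remains has exactly the shape of the additive form of Gallagher's inequality, Lemma~\ref{lemma:largesieve}, with modulus $\asymp Bl/d$ and $y$-range $\asymp T^{\varepsilon} U$, giving a factor $( (Bl/d)^{2} T^{\varepsilon} U + N^{*})$ times $\sum_{n \leq N^{*}} |A_{F}(n,d)|^{2} n^{-1}$. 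The latter coefficient sum is $\ll |A_{F}(1,1)|^{2} d^{1+\varepsilon} Q(F)^{\varepsilon} (N^{*})^{\varepsilon}$ by Molteni's estimate (Lemma~\ref{lemma:Molteni}) and partial summation, and summing over $d$ and over $l$ with weight $1/l$ yields a bound of the form $( RSB + RS B^{2} (U+T)(U+V) M^{-1} )\, |A_{F}(1,1)|^{2} T^{O(\varepsilon)}$.

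The final exponent bookkeeping is the step I expect to be the main obstacle. One must check both $RSB \ll Q^{1/2+\varepsilon}$ and $RS B^{2} (U+T)(U+V) M^{-1} \ll Q^{1/2+\varepsilon}$, using $A \geq \frac{N}{RSD} T^{-\varepsilon}$, $AB \ll M T^{\varepsilon}/(SR)$, $N = Q^{1/2+\varepsilon}$, $Q \asymp T^{2} D R (S + (\alpha-\beta))(1 + (\alpha-\beta))$, $M \leq N$, and \eqref{eq:RSDT'}. The first is immediate from $RSB \ll M R S D N^{-1} T^{2\varepsilon}$ together with $R \ll \alpha - \beta$ and $S, D \ll T$. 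The second is delicate and must be run through the cases of Lemma~\ref{lemma:partition} and \eqref{eq:Qsize}: for $T_{0} = \gamma$ one uses the additional constraints $D \ll \alpha - \beta$ and $S \asymp T$ from Lemma~\ref{lemma:meanvaluereduction}, while for $T_{0} \in \{\alpha,\beta\}$ one uses $V = \alpha - \beta$ together with $R \ll \alpha - \beta$. In each case one tracks the admissible size of $U$ (lying between $\asymp SR/D$ and $\asymp MRS/N$) and confirms that $N^{*}$ is genuinely smaller than $M$ — a condition which, after unwinding, is equivalent to the large-sieve estimate being strong enough. Carrying out this case analysis, together with the routine divisor bookkeeping over $d \mid bl$, is the bulk of the work; each individual case reduces to elementary inequalities among the parameters.
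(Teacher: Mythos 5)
Your overall strategy is the same as the paper's: start from the Voronoi-transformed expression \eqref{eq:SafterVoronoi}, use Lemma \ref{lemma:Phiproperties} to truncate the dual sum via \eqref{eq:Phitruncation} and to trade $\Phi_k$ for a short Dirichlet polynomial via \eqref{eq:Phibilinear}, then feed the resulting bilinear form into Gallagher's large sieve and Molteni's bound; and your dominant term $RSB^{2}(U+T)(U+V)M^{-1}$, together with the case analysis you describe for checking it against $Q^{1/2}$ (using $V\ll\alpha-\beta$ unless $T_0=\gamma$, in which case $D\ll\alpha-\beta$ and $S\asymp T$), matches the paper's endgame exactly. That final bookkeeping, which you defer as ``the main obstacle,'' is in fact the mechanical part.

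The genuine gap is in the step you dismiss as ``routine divisor bookkeeping'': the conversion of the sums over $b\asymp B$ and $r\bmod b$ of $|\sum_n c_n S(l\bar r,\pm n;bl/d)n^{it}|^2$ into a form to which Lemma \ref{lemma:largesieve} applies. Your recipe --- open the Kloosterman sum, ``separate by CRT the factor of the modulus dividing $l$ (producing a Ramanujan sum),'' note a multiplicity $\asymp d$ on residues mod $B/d$, and then apply a large sieve with modulus $\asymp Bl/d$ --- only makes sense when the $l$-part of the modulus is coprime to $b$. In general $l$ and $b$ share prime factors, CRT does not separate that part, and no Ramanujan sum appears there; moreover the available harmonics are only the $\phi(b)$ residues $r\bmod b$, not all reduced residues mod $bl/d$, so one cannot simply invoke Gallagher with modulus $Bl/d$. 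This is precisely the difficulty the paper isolates (in the extreme case the Kloosterman modulus is $l$ alone, the $x$- and $b$-sums must be executed trivially, and $l$ itself must be exploited as the modulus). The paper's fix is the decomposition $(b,y)=d$, $l=yrs$ with $r\mid b^{\infty}$, $(s,b)=1$, the multiplicativity $S(xyrs,n;brs)=S(rx,n;br)S(0,n;s)$, and the exact identities of Lemmas \ref{lemma:9.2}--\ref{lemma:9.4}: the $r\mid b^{\infty}$ part is converted (not by CRT) into characters mod $b$ at the cost of $br^{2}$ together with the crucial congruence $n\equiv 0\pmod r$, the Ramanujan sum is turned by Cauchy into characters mod $s$ at the cost of a factor $s$, and only the combined modulus $c=bs$ is fed into the large sieve; the multiplicity-$d$ phenomenon arises because after the change of variables $x$ runs mod $bd$ while the Kloosterman sum depends on $x$ mod $b$ only, not for the reason you state. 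Without this interpolation the powers of $l$, $d$, $r$, $s$ in your intermediate bound are not justified, even though the final shape you quote happens to agree with the paper's; repairing this step as above, the rest of your plan (dyadic decomposition, Lemma \ref{lemma:Molteni} with partial summation, and the $I+II+III$ verification against \eqref{eq:N2size} and the hypotheses on $A$, $B$, $M$, $N$) does go through.
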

Combining Lemma \ref{lemma:bigA} with Corollary \ref{coro:smallA}, we complete the proof of Theorem \ref{thm:mainthm}.

We state and prove some elementary results used in the proof of Lemma \ref{lemma:bigA}.
\begin{mylemma}
\label{lemma:9.2}
Let $c_m$ be an arbitrary finite sequence of complex numbers, and suppose $r|b^{\infty}$, meaning all the prime factors dividing $r$ also divide $b$.  Then
\begin{equation}
 \sum_{x \shortmod{b}} \Big| \sum_{m \geq 1} c_m S(rx, m ;br) \Big|^2 =
b r^2 \sumstar_{y \shortmod{b}} \Big| \sum_{m \equiv 0 \shortmod{r}} c_{m} \e{y \frac{m}{r}}{b} \Big|^2
\end{equation}
\end{mylemma}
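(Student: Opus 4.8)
The plan is to expand each Kloosterman sum by its definition, peel off the part of the phase that depends on the outer variable $x$, and then use orthogonality of the additive characters modulo $b$.

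First, write $S(rx,m;br)=\sumstar_{d \shortmod{br}} \e{rxd+m\overline{d}}{br}$ with $d\overline{d}\equiv 1 \shortmod{br}$. The key observation is that $\frac{rxd}{br}=\frac{xd}{b}$ depends on $d$ only modulo $b$. Moreover, since $r\mid b^{\infty}$ every prime dividing $r$ divides $b$, so $(d,br)=1\iff (d,b)=1$; consequently the reduced residues modulo $br$ split, according to their reduction modulo $b$, into the reduced residues modulo $b$, each of the latter being hit by exactly $r$ of the former. Grouping the $d$-sum accordingly,
\[
 S(rx,m;br)=\sumstar_{d_0 \shortmod{b}} \e{xd_0}{b}\,T(m,d_0),\qquad
 T(m,d_0):=\sum_{\substack{d \shortmod{br},\ (d,b)=1\\ d\equiv d_0 \shortmod{b}}} \e{m\overline{d}}{br}.
\]
Substituting this into the left-hand side and opening the square, the inner sum $\sum_{x \shortmod{b}} \e{x(d_0-d_0')}{b}$ equals $b$ if $d_0=d_0'$ and vanishes otherwise, so the left-hand side collapses to $b\sumstar_{d_0 \shortmod{b}} \big|\sum_{m} c_m T(m,d_0)\big|^2$.

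It then remains to evaluate $T(m,d_0)$ exactly. Inversion modulo $br$ is a bijection of $(\mz/br)^{\times}$, and it carries the class $\{d\equiv d_0 \shortmod{b}\}$ onto the class $\{f\equiv\overline{d_0}\shortmod{b}\}$; since $r\mid b^{\infty}$, the condition that $f$ be reduced modulo $br$ is automatic once $f\equiv\overline{d_0}\shortmod{b}$, so after this change of variable $f$ ranges over all $r$ residues $\overline{d_0}+bs$ with $s$ modulo $r$. Hence
\[
 T(m,d_0)=\e{m\overline{d_0}}{br}\sum_{s \shortmod{r}}\e{ms}{r}=r\,[\,r\mid m\,]\,\e{(m/r)\,\overline{d_0}}{b},
\]
using $\frac{mbs}{br}=\frac{ms}{r}$ and, when $r\mid m$, $\frac{m\overline{d_0}}{br}=\frac{(m/r)\overline{d_0}}{b}$. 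Plugging this back into the expression above produces the factor $r^{2}$, and since $y:=\overline{d_0}$ again runs over the reduced residues modulo $b$, one obtains precisely $b r^{2}\sumstar_{y\shortmod{b}}\big|\sum_{m\equiv 0 \shortmod{r}} c_m \e{y(m/r)}{b}\big|^{2}$, which is the claimed right-hand side.

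The only genuinely delicate point is the bookkeeping: keeping the inverse modulo $b$ distinct from the inverse modulo $br$, and invoking the hypothesis $r\mid b^{\infty}$ at exactly the right moments so that every coprimality condition reduces to coprimality with $b$ and the residues modulo $br$ fiber cleanly over the residues modulo $b$. Once that is set up correctly, the rest is routine character orthogonality together with one geometric sum.
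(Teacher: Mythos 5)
Your proof is correct and follows essentially the same route as the paper: both arguments exploit that $\tfrac{rxd}{br}=\tfrac{xd}{b}$ so orthogonality in $x$ acts modulo $b$, then split the units modulo $br$ over their reductions modulo $b$ (using $r\mid b^{\infty}$ for the coprimality equivalence), with the geometric sum in the mod-$r$ direction producing the factor $r$ and the condition $r\mid m$. The only difference is bookkeeping order --- you decompose the Kloosterman sum into a Fourier series in $x$ before squaring, while the paper squares first and then imposes $h_1\equiv h_2 \pmod{b}$ --- which does not change the substance of the argument.
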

\begin{proof}
 Opening the square, writing out the definition of the Kloosterman sum, and evaluating the sum over $x$ using orthogonality of characters, we have
\begin{equation}
\label{eq:13.4}
\sum_{x \shortmod{b}} \Big| \sum_m c_m S(rx, m ;br) \Big|^2 = b \sum_{m_1, m_2} c_{m_1} \overline{c_{m_2}} \sumstar_{\substack{h_1, h_2 \shortmod{br} \\h_1 \equiv h_2 \shortmod{b}}} \e{h_1 m_1 - h_2 m_2}{br}.
\end{equation}
Change variables via $h_i = y + b z_i$, $i=1,2$, where $y$ runs modulo $b$ and $z_i$ runs modulo $r$.  Since $r |b^{\infty} $, the condition that $(h_i, br) = 1$ is equivalent to $(y, b) = 1$.  The sum over $z_i$ vanishes unless $r | m_i$, in which case the sum is $r$. Thus \eqref{eq:13.4} equals
\begin{equation}
b r^2 \sum_{r|m_1, m_2} c_{m_1} \overline{c_{m_2}} \sumstar_{y \shortmod{b}} \e{y (\frac{m_1}{r} - \frac{m_2}{r}}{b},
\end{equation}
which is easily rewritten to complete the proof.
\end{proof}

\begin{mylemma}
\label{lemma:9.3}
 Let $b_m$ be an arbitrary finite sequence of complex numbers.  Then
 \begin{equation}
\label{eq:11.7}
\Big| \sum_{m \geq 1} b_m S(0,m;s) \Big|^2 \leq s \sumstar_{h \shortmod{s}} \Big| \sum_{m \geq 1} b_m \e{hm}{s} \Big|^2.
\end{equation}
\end{mylemma}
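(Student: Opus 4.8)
The plan is to recognize that $S(0,m;s)$ is exactly the Ramanujan sum: by definition of the Kloosterman sum, $S(0,m;s) = \sumstar_{h \shortmod{s}} \e{hm}{s}$, a sum over the $\varphi(s)$ reduced residues modulo $s$. So the first step is simply to substitute this into the left-hand side and interchange the order of the (finite) summations over $m$ and over $h$, writing
\begin{equation}
 \sum_{m \geq 1} b_m S(0,m;s) = \sumstar_{h \shortmod{s}} \Big( \sum_{m \geq 1} b_m \e{hm}{s} \Big).
\end{equation}

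The second step is to apply the Cauchy--Schwarz inequality to the outer sum over $h$, which ranges over $\varphi(s)$ terms. This gives
\begin{equation}
 \Big| \sumstar_{h \shortmod{s}} \sum_{m \geq 1} b_m \e{hm}{s} \Big|^2 \leq \varphi(s) \sumstar_{h \shortmod{s}} \Big| \sum_{m \geq 1} b_m \e{hm}{s} \Big|^2.
\end{equation}
Finally one bounds $\varphi(s) \leq s$ to obtain the stated inequality \eqref{eq:11.7}.

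There is essentially no obstacle here; the only thing to be slightly careful about is that the inner sums are finite (the sequence $b_m$ is finitely supported), so all interchanges of summation are trivially justified and Cauchy--Schwarz applies directly. One could equivalently phrase this as: $S(0,\cdot;s)$ is the Fourier transform (over $\mz/s\mz$) of the indicator of the reduced residues, so the estimate is just Parseval combined with the trivial bound on the number of reduced residues; but the two-line Cauchy--Schwarz argument above is the cleanest route.
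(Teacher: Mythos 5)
Your proof is correct and is essentially identical to the paper's argument: open the Kloosterman sum $S(0,m;s)$ as a sum over reduced residues, swap the order of summation, and apply Cauchy--Schwarz to the outer $h$-sum (the paper states the bound with $s$ rather than your slightly sharper $\varphi(s)$, which you then relax in the same way).
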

\begin{proof}
 This follows from opening the Kloosterman sum, reversing the orders of summation, and applying Cauchy's inequality to the outer sum.
\end{proof}

\begin{mylemma}
\label{lemma:9.4}
 Suppose $(b,s) = 1$, $r | b^{\infty}$, and $a_m$ is an arbitrary finite sequence of complex numbers.  Then
\begin{equation}
\label{eq:9.4}
 \sumstar_{x \shortmod{b}} \Big|\sum_{m \geq 1} a_m S(0, m;s) S(rx,m;br)\Big|^2 \leq br^2 s \sumstar_{x \shortmod{bs}} \Big|\sum_{m \equiv 0 \shortmod{r}} a_m \e{x \frac{m}{r}}{bs}\Big|^2.
\end{equation}
\end{mylemma}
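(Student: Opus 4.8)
The plan is to deduce \eqref{eq:9.4} from Lemmas \ref{lemma:9.2} and \ref{lemma:9.3}. The idea is that $S(0,m;s)$ is a Ramanujan sum, hence a sum of additive characters modulo $s$, and that these characters can be merged with the characters modulo $b$ produced by Lemma \ref{lemma:9.2} into single characters modulo $bs$; here we use $(b,s)=1$, noting also that $r\mid b^{\infty}$ together with $(b,s)=1$ forces $(r,s)=1$.

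First I would fix $x$ and apply Lemma \ref{lemma:9.3} to the inner sum, with the sequence there taken to be $a_m S(rx,m;br)$ and modulus $s$, obtaining
\begin{equation*}
\Big|\sum_{m\ge 1} a_m\, S(0,m;s)\,S(rx,m;br)\Big|^2 \le s\sumstar_{h\shortmod{s}}\Big|\sum_{m\ge 1} a_m\, \e{hm}{s}\, S(rx,m;br)\Big|^2 .
\end{equation*}
Summing over $\sumstar_{x\shortmod{b}}$ and interchanging the two sums reduces matters to bounding, for each fixed reduced residue $h$ modulo $s$, the quantity $\sumstar_{x\shortmod{b}}\big|\sum_m c_m S(rx,m;br)\big|^2$ with $c_m := a_m \e{hm}{s}$.

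Since the summand is nonnegative, $\sumstar_{x\shortmod{b}}\le\sum_{x\shortmod{b}}$, so Lemma \ref{lemma:9.2} applies and yields
\begin{equation*}
\sumstar_{x\shortmod{b}}\Big|\sum_{m} c_m\, S(rx,m;br)\Big|^2 \le br^2\sumstar_{y\shortmod{b}}\Big|\sum_{m\equiv 0\shortmod{r}} a_m\,\e{hm}{s}\,\e{y\frac{m}{r}}{b}\Big|^2 .
\end{equation*}
Writing $m=rm'$, the product of the two exponentials equals $\e{m'(hrb+ys)}{bs}$. The remaining point is the bookkeeping claim that, as $h$ ranges over the reduced residues modulo $s$ and $y$ over the reduced residues modulo $b$, the integer $hrb+ys$ ranges exactly once over the reduced residues modulo $bs$: under the Chinese remainder isomorphism $\mz/bs\mz\cong\mz/b\mz\times\mz/s\mz$ it corresponds to $(ys\bmod b,\ hrb\bmod s)$, and $y\mapsto ys$ is a bijection of $(\mz/b\mz)^{\times}$ while $h\mapsto hrb$ is a bijection of $(\mz/s\mz)^{\times}$ (using $(s,b)=1$ and $(rb,s)=1$ respectively). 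Summing the previous display over the reduced residues $h\shortmod{s}$ and collecting the factors $s$ and $br^2$ then gives exactly the right-hand side of \eqref{eq:9.4}.

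The argument is entirely routine and I do not anticipate a genuine obstacle; the only steps needing a little care are the passage from the starred to the full sum over $x$ (via positivity) before invoking Lemma \ref{lemma:9.2}, and the verification of the coprimality conditions underlying the character-merging bijection.
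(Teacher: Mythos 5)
Your argument is correct and is essentially the paper's own proof: both deduce \eqref{eq:9.4} from Lemmas \ref{lemma:9.2} and \ref{lemma:9.3} together with the Chinese remainder theorem, the only difference being that you apply Lemma \ref{lemma:9.3} before Lemma \ref{lemma:9.2} (the paper does the reverse) and absorb the factor $rb$ into the bijection $h \mapsto hrb$ rather than first changing variables $h \to \overline{r}h$. The points you flag for care (passing from the starred to the full sum over $x$ by positivity, and $(r,s)=1$ following from $r \mid b^{\infty}$ and $(b,s)=1$) are exactly the same ingredients used implicitly in the paper's proof.
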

\begin{proof}
Let $S$ be the left hand side of \eqref{eq:9.4}. Letting $c_m = a_m S(0,m;s)$ and applying Lemma \ref{lemma:9.2}, we have that
\begin{equation}
 S \leq br^2 \sumstar_{y \shortmod{b}} \Big|\sum_{m \equiv 0 \shortmod{r}} a_m S(0,m ;s) \e{y\frac{m}{r}}{b} \Big|^2.
\end{equation}
Next we apply Lemma \ref{lemma:9.3} with $b_m = a_m \e{y \frac{m}{r}}{b}$, getting
\begin{equation}
 S \leq br^2 s \sumstar_{h \shortmod{s}} \sumstar_{y \shortmod{b}} \Big| \sum_{m \geq 1} a_m \e{hr \frac{m}{r}}{s} \e{y \frac{m}{r}}{b}\Big|^2.
\end{equation}
Finally we change variables $h \rightarrow \overline{r} h$, valid since $(r,s) = 1$, and write $x = hb + ys$ which by the Chinese remainder theorem runs over $(\mz/bs \mz)^*$.
\end{proof}

\begin{proof}[Proof of Lemma \ref{lemma:bigA}]
Recall \eqref{eq:MABwrtS} and  \eqref{eq:SafterVoronoi}. 
In this way we get (we do not display all of the parameters of $\mathcal{P}_{A,B}$)
\begin{multline}
 \mathcal{P}_{A,B} \ll \frac{RS}{B} \sum_{l \leq \sqrt{N}} \frac{1}{l} \sum_{b \asymp B} \medspace \sumstar_{x \shortmod{b}} 
\\
\intR g(v/T^{\varepsilon}) \sum_{\pm, k} M^{-1} \Big|b \sum_{y | bl} \sum_{n \geq 1} \frac{A_F(n,y)}{yn} S(\pm x l, n; \frac{bl}{y}) \Phi_k(\frac{ny^2}{b^3 l}, v)\Big|^2 dv.
\end{multline}
Applying Cauchy's inequality to the sum over $y$, we obtain
\begin{multline}
 \mathcal{P}_{A,B} \ll \frac{RS T^{\varepsilon}}{B} \sum_{l \leq \sqrt{N}} \frac{1}{l} \sum_{b \asymp B} \medspace \sumstar_{x \shortmod{b}} \sum_{y | bl}
\\
\sum_{\pm, k} \intR g(v/T^{\varepsilon})   M^{-1} \Big|b  \sum_{n \geq 1} \frac{A_F(n,y)}{yn} S(\pm x l, n; \frac{bl}{y}) \Phi_k(\frac{ny^2}{b^3 l}, v)\Big|^2 dv.
\end{multline}
We apply Lemma \ref{lemma:Phiproperties}, truncating the sum over $n$ with \eqref{eq:Phitruncation}.
In this way we obtain, noting that the choice of $\pm$ sign and choice of $k$ lead to the same upper bound, 
\begin{multline}
 \mathcal{P}_{A,B} \ll \frac{RS T^{\varepsilon}}{BU} \sum_{l \leq \sqrt{N}} \frac{1}{l} \sum_{b \asymp B} \medspace \sumstar_{x \shortmod{b}} \sum_{y | bl}
\int_{-UT^{\varepsilon}}^{UT^{\varepsilon}} \Big| \sum_{n \leq N_2^*} \frac{A_F(n,y)'}{\sqrt{nbl}} S(x l, n; \frac{bl}{y}) n^{it}\Big|^2 dt 
\\
+ O(T^{-50}), 
\end{multline}
where $A_F(n,y)' = A_F(n,y) n^{iT_0}$ and 
\begin{equation}
\label{eq:N2size}
 M \frac{N_2^* y^2}{B^3 l} \ll T^{\varepsilon} U(T+U)(V+U).
\end{equation}
For simplicity, we restrict the variables to dyadic segments as follows: $l \asymp L$, $y \asymp Y$, $n \asymp N_2$, writing $\mathcal{P}_{A,B} \ll \sum_{L, Y, N_2} \mathcal{P}_{A,B}(L,Y,N_2) + O(T^{-50})$, where $L$, $Y$, $N_2$ run over dyadic numbers, 
Rearranging \eqref{eq:N2size} and recalling \eqref{eq:Udef}, we have
\begin{equation}
\label{eq:N2*def}
 N_2 \ll \frac{B^2 L}{AD Y^2} (T+ \frac{M}{ABD})(V +\frac{M}{ABD}) T^{\varepsilon}.
\end{equation}
We recall that in our application, $ML^2 \leq N =Q^{\half + \varepsilon}$,  $AB \leq \frac{M}{RS} T^{\varepsilon}$, and $A \geq \frac{N}{RSD} T^{-\varepsilon}$.

The reader who considers only the case $l=y =1$ can finish the proof fairly easily using the large sieve.  Unfortunately, there are other cases that require a more involved treatment; in particular, in the opposite extreme case with $y=b$ then the Kloosterman sum above has modulus $l$ and one observes that the sums over $x$ and $b$ must be executed trivially.  In this case we need to exploit $l$ as a modulus.  In general we need to ``interpolate'' between these two extreme cases ($y=1$ and $y=b$) and partially combine $b$ and $l$ into one modulus.  This is the underlying motivation behind the forthcoming arguments.

Now we do some elementary arrangements.  Write $(b,y) = d$ and change variables $b \rightarrow db$, $y \rightarrow dy$, getting
\begin{multline}
 \mathcal{P}_{A,B}(L,Y,N_2) \ll \frac{RS T^{\varepsilon}}{B U L} \sum_{l \asymp L} \sum_{d \ll \min(B,Y)} \sum_{b \asymp \frac{B}{d}} \medspace \sumstar_{x \shortmod{bd}} 
\\
 \int_{-UT^{\varepsilon}}^{UT^{\varepsilon}}  \sum_{\substack{y | l, y\asymp Y/d \\ (b,y) = 1}} \Big| \sum_{n \asymp N_2} \frac{A_F(n,dy)'}{\sqrt{nbdl}} S( x l, n; \frac{bl}{y}) n^{it} \Big|^2 dt.
\end{multline}
Then write $l = yrs$ where $r | b^{\infty}$ (meaning all the prime factors of $r$ divide $b$) and $(s,b) = 1$.  This gives
\begin{multline}
\mathcal{P}_{A,B}(L,Y,N_2) \ll 
\frac{RS T^{\varepsilon}}{B U L}  \sum_{d \ll \min(B,Y)} \sum_{b \asymp \frac{B}{d}} \medspace \sumstar_{x \shortmod{bd}} 
 \\
 \int_{-UT^{\varepsilon}}^{UT^{\varepsilon}}
 \sum_{\substack{y\asymp Y/d \\ (b,sy) = 1}} \sum_{\substack{yrs \asymp L \\ r|b^{\infty}}}  \Big| \sum_{n \asymp N_2} \frac{A_F(n,dy)'}{\sqrt{nbdyrs}} S(x yrs, n;brs) n^{it}\Big|^2 dt.
\end{multline}
Although $x$ runs modulo $bd$, the Kloosterman sum is unchanged when replacing $x$ by a multiple of $b$.  The same sum is repeated at most $d$ times, whence
\begin{multline}
\mathcal{P}_{A,B}(L,Y,N_2) \ll \frac{RS T^{\varepsilon}}{B U L}   \sum_{d \ll \min(B,Y)} d \sum_{b \asymp B/d} 
\\
\int_{-UT^{\varepsilon}}^{UT^{\varepsilon}} \sum_{\substack{y\asymp Y/d \\ (b,sy) = 1}} \sum_{\substack{yrs \asymp L \\ r|b^{\infty}}} \frac{1}{brs} \sumstar_{x \shortmod{b}} \Big| \sum_{n \asymp N_2} \frac{A_F(n,dy)'}{\sqrt{ndy}} S(x yrs, n;brs) n^{it}\Big|^2 dt.
\end{multline}
From the multiplicativity relation for Kloosterman sums, we have
\begin{equation}
S(xyrs, n ; brs) = S(y r s \overline{s} x, n \overline{s}; br) S(y rs \overline{br} x, n \overline{br} ;s) = S(y r x \overline{s}, n , br) S(0, n ;s) ,
\end{equation}
which becomes $S(r x, n ;br) S(0, n;s)$ after the change of variables $x \rightarrow s \overline{y} x$ (observe that $y$ is coprime to $br$).
Applying Lemma \ref{lemma:9.4}, we have
\begin{multline}
\mathcal{P}_{A,B}(L,Y,N_2) \ll \frac{RS T^{\varepsilon}}{B U L}   \sum_{d \ll \min(B,Y)} d \sum_{b \asymp B/d} \int_{-UT^{\varepsilon}}^{UT^{\varepsilon}}
\\
 \sum_{\substack{y\asymp Y/d \\ (b,sy) = 1}} \sum_{\substack{yrs \asymp L \\ r|b^{\infty}}} r \sumstar_{h \shortmod{bs}} \Big| \sum_{\substack{n \equiv 0 \shortmod{r} \\ n \asymp N_2}} \frac{A_F(n,dy)'}{\sqrt{ndy}} \e{h \frac{n}{r}}{bs} n^{it}\Big|^2 dt.
\end{multline}
Next say $s \asymp H$ where $H Y r \asymp dL$, (note $H \ll L$) and accordingly write $\mathcal{P}_{A,B}(L,Y, N_2) \ll \sum_{H} \mathcal{P}_{A,B}(L,Y, N_2, H)$. In addition, group $bs = c$ as a new variable and drop the condition $r|b^{\infty}$ by positivity.  We get the new bound
\begin{multline}
\mathcal{P}_{A,B}(L,Y, N_2, H) \ll \frac{RS T^{\varepsilon}}{AB U L}  \sum_{a \asymp A} \sum_{d \ll \min(B,Y)} d  \sum_{\substack{y\asymp Y/d}} \sum_{\substack{r \ll \frac{Ld}{YH} }} r
\\
\int_{-UT^{\varepsilon}}^{UT^{\varepsilon}}
\sum_{c \asymp \frac{BH}{d}}
 \sumstar_{h \shortmod{c}} \Big| \sum_{n \asymp N_2/r} \frac{A_F(nr,dy)'}{\sqrt{nrdy}} \e{h n}{c} n^{it} \Big|^2 dt.
\end{multline}
We next apply the large sieve, Lemma \ref{lemma:largesieve}, getting
\begin{multline}
 \mathcal{P}_{A,B}(L,Y, N_2, H) \ll \frac{RS T^{\varepsilon}}{B U L} 
\sum_{d \ll \min(B,Y)} d  
\\
\sum_{\substack{y\asymp \frac{Y}{d}}} \sum_{\substack{r \ll \frac{Ld}{YH} }} r (U \leg{BH}{d}^2 + \frac{N_2}{r}) dy \sum_{n \asymp N_2/r} \frac{|A_F(nr, dy)|^2}{nr(dy)^2}.
\end{multline}
Making $nr = q_1$ and $dy = q_2$ be new variables and summing appropriately, truncating the innermost sum at, say $q_1 q_2^2 \leq T^{100}$, we have
\begin{equation}
 \mathcal{P}_{A,B}(L,Y, N_2, H) \ll \frac{RS T^{\varepsilon}}{B U L}       ( L U B^2 H + N_2 Y^2) \sum_{q_1 q_2^2 \leq T^{100}} \frac{|A_F(q_1, q_2)|^2}{q_1 q_2^2}.
\end{equation}
By Lemma \ref{lemma:Molteni}, this inner sum is $O(|A_F(1,1)|^2 T^{\varepsilon})$.  Then a small calculation gives, recalling $H \ll L$
\begin{equation}
\label{eq:PABrandom}
 \mathcal{P}_{A,B}(L,Y, N_2, H) \ll |A_F(1,1)|^2 T^{\varepsilon}(RS BL + \frac{RSY^2}{BUL} N_2)
\end{equation}
We observe that the first term inside the parentheses is satisfactory noting that
\begin{equation}
 RSBL \ll RSL \frac{M}{RS} T^{\varepsilon} \asymp LM T^{\varepsilon} \ll \frac{N}{L} T^{\varepsilon}.
\end{equation}
With \eqref{eq:N2size}, we calculate the second term inside the parentheses in \eqref{eq:PABrandom} as
\begin{equation}
 \frac{RSY^2}{BUL} N_2 \asymp \frac{RSB^2}{MU} \frac{N_2 Y^2 M}{B^3 L} \ll \frac{RS B^2}{M} (T+U)(V+U).
\end{equation}
Since $V \ll T$ (recall the definition of $V$ given in Lemma \ref{lemma:Phiproperties}) and $Y \gg 1$, we have that this term is
\begin{equation}
 \ll \frac{RS B^2}{M} (TV + UT + U^2) := I + II + III.
\end{equation}
We calculate each of these terms in turn.  Recall $B \leq \frac{M}{ARS} T^{\varepsilon}$ (see \eqref{eq:ABtruncation}), $A \geq \frac{N}{RSD} T^{-\varepsilon}$, and $M \leq N$, so that
\begin{equation}
 I \ll \frac{MTV}{RSA^2} T^{\varepsilon} \ll \frac{TV D^2 RS}{N} T^{\varepsilon}.
\end{equation}
We recall that $N = Q^{\half + \varepsilon}$ and $Q \asymp T^2 DR(S + (\alpha-\beta))(1 + (\alpha-\beta))$.  Thus
\begin{equation}
 TVD^2 RS = TDR (VSD) \ll TDR (T(S + \alpha-\beta)(1 + \alpha-\beta)) \asymp Q,
\end{equation}
since $V \ll \alpha-\beta$ unless $T_0 = \gamma$ in which case $V= T$ and $D \ll \alpha - \beta$, recalling Lemma \ref{lemma:meanvaluereduction}.  Thus $I \ll N T^{\varepsilon}$, as desired.

We calculate
\begin{equation}
 II \ll \frac{RSBT}{AD} \ll \frac{T M}{A^2 D} T^{\varepsilon} \ll \frac{T R^2 DS^2}{N} T^{\varepsilon}.
\end{equation}
We claim $T R^2 DS^2 \ll Q$, which follows from
\begin{equation}
 RS^2 \ll (1 + \alpha-\beta) (S + \alpha-\beta) T.
\end{equation}
Thus $II \ll N T^{\varepsilon}$, as desired.

Finally, we have
\begin{equation}
 III \asymp \frac{RS M}{A^2 D^2} \ll \frac{R^3 S^3 M}{N^2} T^{\varepsilon} \ll \frac{R^3 S^3}{N} T^{\varepsilon}.
\end{equation}
Then we check $R^3 S^3 \ll Q$, whence $III \ll N T^{\varepsilon}$.
\end{proof}

\end{document}